\newcommand{\vertiii}[1]{{\vert\kern-0.25ex\vert\kern-0.25ex\vert #1 
		\vert\kern-0.25ex\vert\kern-0.25ex\vert}}
	\definecolor{Darkgreen}{rgb}{0,0.4,0}
\DeclareMathAlphabet\mathbfcal{OMS}{cmsy}{b}{n}
\DeclareMathAlphabet\mathbfscr{OMS}{mdugm}{b}{n}
\spnewtheorem{defn}[equation]{Definition}{\bfseries}{\upshape}
\spnewtheorem{prop}[equation]{Proposition}{\bfseries}{\upshape}
\spnewtheorem{thm}[equation]{Theorem}{\bfseries}{\upshape}
\spnewtheorem{cor}[equation]{Corollary}{\bfseries}{\upshape}
\spnewtheorem{rmk}[equation]{Remark}{\bfseries}{\upshape}
\spnewtheorem{lem}[equation]{Lemma}{\bfseries}{\upshape}
\spnewtheorem{expl}[equation]{Example}{\bfseries}{\upshape}
\numberwithin{equation}{section}
\renewcommand{\bfS}{\mathbfcal S}
\renewcommand{\bfB}{\mathbfcal B}
\renewcommand{\bfA}{\mathbfcal A}
\renewcommand{\bfX}{\mathbfcal X}
\begin{document}
	
\title{Note on the existence theory for pseudo-monotone evolution problems}

\author{A. Kaltenbach\and M. R\r{u}\v{z}i\v{c}ka}

\institute{A. Kaltenbach \at Institute of Applied Mathematics,
  Albert-Ludwigs-University Freiburg,
  Ernst-Zermelo-Straße 1, 79104 Freiburg,\\
  \email{alex.kaltenbach@mathematik.uni-freiburg.de}
  \\
  M. R\r{u}\v{z}i\v{c}ka \at Institute of Applied Mathematics,
  Albert-Ludwigs-University Freiburg,
  Ernst-Zermelo-Straße 1, 79104 Freiburg,\\
  \email{rose@mathematik.uni-freiburg.de} }
	
\date{Received: date / Accepted: date}

\maketitle

\begin{abstract}
  In this note we develop a framework which allows to prove an
  existence result for non-linear evolution problems
  involving time-dependent, pseudo-monotone operators. This abstract
  existence result is applicable to a large class of concrete problems
  where the standard theorem on evolutionary pseudo-monotone operators
  (cf.~Theorem~\ref{thm:b}) is not applicable. To this end we
  introduce the notion of Bochner pseudo-monotonicity, and Bochner
  coercivity, which are appropriate extensions of the concepts of
  pseudo-monotonicity and coercivity to the evolutionary
  setting. Moreover, we give sufficient conditions for these new
  notions, which are easily and widely applicable.
		
  \keywords{Evolution equation \and
    Pseudo-monotone operator \and Existence result}
  \subclass{47 H05, 35 K90, 35 A01}
		
\end{abstract}

\section{Introduction}
\label{intro}
The theory of pseudo-monotone operators turned out to be a powerful
instrument in proving existence results for non-linear problems both
in the time-independent and the time-dependent setting.  The
celebrated main theorem on pseudo-monotone operators, stemming from
Brezis \cite{Bre68}, states the following\footnote{All notion are
  defined in Section~\ref{sec:2}.}
\begin{thm}\label{thm:a} 
  Given a reflexive, separable Banach space $V$, a right-hand side
  $f \in V^*$ and an operator $A:V\rightarrow V^*$ that it is
  pseudo-monotone, bounded and coercive, there exists a solution
  $u \in V$ of the problem
  \begin{align}\label{eq:s}
    \begin{alignedat}{2}
      Au&=f \quad &&\textrm { in } V^*,
    \end{alignedat}
  \end{align}
  i.e., the operator $A$ is surjective.
\end{thm}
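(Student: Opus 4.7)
The plan is to prove existence by the classical Galerkin method. Since $V$ is separable, I fix linearly independent $\{w_k\}_{k\in\mathbb{N}}\subset V$ whose span is dense in $V$, and set $V_n:=\mathrm{span}\{w_1,\dots,w_n\}$. On each finite-dimensional space I seek $u_n\in V_n$ with
\begin{equation*}
  \langle Au_n,v\rangle=\langle f,v\rangle\qquad\text{for all }v\in V_n.
\end{equation*}
Identifying $V_n$ with $\mathbb{R}^n$ and using that $A|_{V_n}\colon V_n\to V_n^*$ is continuous (bounded pseudo-monotone operators are demicontinuous, hence continuous on finite-dimensional spaces), coercivity supplies a radius $R>0$, independent of $n$, such that $\langle Av,v\rangle\geq \|f\|_{V^*}\|v\|_V$ whenever $\|v\|_V\geq R$. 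A standard corollary of Brouwer's fixed point theorem (a continuous vector field on a closed ball pointing outward on the boundary has a zero) then produces a Galerkin solution $u_n\in V_n$ with $\|u_n\|_V\leq R$.

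\emph{Passage to the limit.} Reflexivity of $V$ and the uniform bound $\|u_n\|_V\leq R$ yield a subsequence (not relabelled) with $u_n\rightharpoonup u$ in $V$. Boundedness of $A$ keeps $(Au_n)$ bounded in the reflexive space $V^*$, hence a further extraction gives $Au_n\rightharpoonup \chi$ in $V^*$. Passing $n\to\infty$ in the Galerkin equation with $v\in V_m$ fixed, and then invoking density of $\bigcup_m V_m$, I identify $\chi=f$. Testing with $v=u_n$ (legitimate since $u_n\in V_n$) yields $\langle Au_n,u_n\rangle=\langle f,u_n\rangle\to\langle f,u\rangle$, whence
\begin{equation*}
  \langle Au_n,u_n-u\rangle\longrightarrow\langle f,u\rangle-\langle\chi,u\rangle=0.
\end{equation*}
The triggering condition $\limsup_n\langle Au_n,u_n-u\rangle\leq 0$ of pseudo-monotonicity is thereby satisfied, and it delivers, for every $v\in V$,
\begin{equation*}
  \langle Au,u-v\rangle\leq\liminf_n\langle Au_n,u_n-v\rangle=\langle f,u-v\rangle.
\end{equation*}
Substituting $v=u\mp w$ for arbitrary $w\in V$ gives $\langle Au-f,w\rangle=0$, i.e.\ $Au=f$.

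\emph{Main obstacle.} The technically delicate step is the construction of the Galerkin solutions $u_n$: it is the only place where all three hypotheses (boundedness, pseudo-monotonicity and coercivity) must cooperate, through the interplay of demicontinuity of $A|_{V_n}$ and a Brouwer-type topological argument that converts the coercivity inequality into a zero of the Galerkin vector field. Everything after that is a structured passage to the weak limit powered by reflexivity; the role of pseudo-monotonicity is precisely to bridge the two independent weak limits of $(u_n)$ in $V$ and $(Au_n)$ in $V^*$, which is exactly the reason the notion was invented.
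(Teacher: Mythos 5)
The paper does not supply a proof of Theorem~\ref{thm:a}; it states the result as the classical theorem of Brezis \cite{Bre68} and relies on it as a black box. Your Galerkin proof is precisely the textbook argument (cf.\ Zeidler~\cite{Zei90B}, \S 27): finite-dimensional approximation, continuity of $A$ restricted to $V_n$ via the fact that a bounded pseudo-monotone operator is demicontinuous (hence continuous on finite-dimensional subspaces, where weak and strong topologies coincide), an acute-angle/Brouwer corollary on the ball of radius $R$ supplied by coercivity, uniform a~priori bound, extraction of weak limits $u_n\rightharpoonup u$ and $Au_n\rightharpoonup\chi$ by reflexivity and boundedness, identification $\chi=f$ by density, computation of the triggering $\limsup$, and finally pseudo-monotonicity to equate $Au$ and $f$. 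The argument is correct and complete; no gaps. The only point worth spelling out a bit more carefully, were this proof to appear in the paper, is the transfer of the acute-angle condition to $\mathbb{R}^n$: under the linear identification of $V_n$ with $\mathbb{R}^n$ and a Riesz-type identification of $V_n^*$ with $V_n$, the pairing $\langle Av-f,v\rangle_V$ becomes a Euclidean inner product, and the boundary of the $\|\cdot\|_V$-ball in $V_n$ is a compact set in $\mathbb{R}^n$ bounding a convex neighbourhood of the origin, so the Brouwer corollary applies to non-Euclidean balls as well. Beyond that technicality, your write-up also correctly locates the conceptual crux: pseudo-monotonicity is exactly the device needed to bridge the two unrelated weak limits of $(u_n)$ and $(Au_n)$.
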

Typical examples for such operators are sums of a monotone and a
compact operator (cf.~\cite{Zei90B}). Thus, the theory of pseudo-monotone operators
extends the theory of monotone operators due to Browder \cite{Bro63}
and Minty \cite{Min63}.

There exists also a time-dependent analogue of the above result
(cf.~\cite{Sho97}).
\begin{thm} \label{thm:b} Given an evolution triple
  $V{\hookrightarrow} H \cong H^*{\hookrightarrow}V^*$, a finite time
  horizon $I:=\left(0,T\right)$, an initial value
  $\boldsymbol{y}_0\in H$, a right-hand
  side\footnote{$p'=\frac{p}{p-1}$ for $p\in \left(1,\infty\right)$.}
  $\boldsymbol{f}\in L^{p'}(I,V^*)$, $1<p<\infty$, and a family of
  operators $A(t):V\rightarrow V^*$ such that the induced operator
  $\mathbfcal{A}:L^p(I,V)\rightarrow L^{p'}(I,V^*)$, given via
  $(\mathbfcal{A}\boldsymbol{x})(t):=A(t)(\boldsymbol{x}(t))$ for
  almost every $t\in I$ and all $\boldsymbol{x}\in L^p(I,V)$, is
  pseudo-monotone, bounded and coercive, there exists a solution
  $\boldsymbol{y}\in W^{1,p,p'}(I,V,V^*)$ of the initial value problem
  \begin{align}
    \begin{alignedat}{2}
      \frac{d\boldsymbol{y}}{dt}+\mathbfcal{A}\boldsymbol{y}
      &=\boldsymbol{f}\quad &&\text{ in }L^{p'}(I,V^*),
      \\
      \boldsymbol{y}(0)&=\boldsymbol{y}_0&&\text{ in }H.
      \label{eq:1.3}
    \end{alignedat}
  \end{align}
\end{thm}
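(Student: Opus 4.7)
The plan is to construct a solution by a Galerkin scheme adapted to the evolution triple, derive a priori estimates from the coercivity and boundedness of $\mathbfcal{A}$, extract weak limits, and then identify the non-linear limit through the assumed pseudo-monotonicity of $\mathbfcal{A}$. Since $V$ is separable, fix a basis $\{w_k\}_{k\in\mathbb{N}}\subset V$ whose finite linear spans $V_n:=\operatorname{span}\{w_1,\dots,w_n\}$ are dense in $V$, and approximate $\boldsymbol{y}_0$ by $\boldsymbol{y}_0^n\in V_n$ with $\boldsymbol{y}_0^n\to\boldsymbol{y}_0$ in $H$. I would then seek $\boldsymbol{y}_n(t)=\sum_{k=1}^{n}\alpha_k^n(t)\,w_k$ solving
\begin{align*}
\langle\partial_t\boldsymbol{y}_n(t),w_k\rangle_H+\langle A(t)\boldsymbol{y}_n(t),w_k\rangle_V=\langle\boldsymbol{f}(t),w_k\rangle_V,\qquad k=1,\dots,n,
\end{align*}
with $\boldsymbol{y}_n(0)=\boldsymbol{y}_0^n$. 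Since $A(t)$ inherits measurability in $t$ and local boundedness in the fibre variable from the boundedness of $\mathbfcal{A}$, this is a Carath\'eodory ODE system for the coefficient vector and possesses a maximal local solution.

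Testing the Galerkin identity with $\boldsymbol{y}_n(t)$ itself, invoking the classical identity $\frac{1}{2}\frac{d}{dt}\|\boldsymbol{y}_n\|_H^2=\langle\partial_t\boldsymbol{y}_n,\boldsymbol{y}_n\rangle_H$, and using the coercivity of $\mathbfcal{A}$ together with Young's inequality, I obtain
\begin{align*}
\sup_{n\in\mathbb{N}}\bigl(\|\boldsymbol{y}_n\|_{L^\infty(I,H)}+\|\boldsymbol{y}_n\|_{L^p(I,V)}\bigr)<\infty,
\end{align*}
which also rules out blow-up and extends each $\boldsymbol{y}_n$ to the whole of $I$. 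Boundedness of $\mathbfcal{A}$ then yields $\sup_n\|\mathbfcal{A}\boldsymbol{y}_n\|_{L^{p'}(I,V^*)}<\infty$, and the Galerkin equation itself controls $\partial_t\boldsymbol{y}_n$ in a suitable dual space. Passing to a non-relabelled subsequence I extract
\begin{align*}
\boldsymbol{y}_n\rightharpoonup\boldsymbol{y}\ \text{in }L^p(I,V),\qquad \mathbfcal{A}\boldsymbol{y}_n\rightharpoonup\boldsymbol{\chi}\ \text{in }L^{p'}(I,V^*),
\end{align*}
and testing the Galerkin identity against elements of $\bigcup_n V_n$ multiplied by scalar-valued $C_c^\infty(I)$-functions, then exploiting density, I arrive at $\partial_t\boldsymbol{y}+\boldsymbol{\chi}=\boldsymbol{f}$ in $L^{p'}(I,V^*)$ with $\boldsymbol{y}\in W^{1,p,p'}(I,V,V^*)$, and, via an integration-by-parts identification of traces, $\boldsymbol{y}(0)=\boldsymbol{y}_0$.

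The crux is to identify $\boldsymbol{\chi}=\mathbfcal{A}\boldsymbol{y}$. I would test the Galerkin equation with $\boldsymbol{y}_n$ and the limit equation with $\boldsymbol{y}$, and subtract, obtaining in particular
\begin{align*}
\int_I\langle\mathbfcal{A}\boldsymbol{y}_n,\boldsymbol{y}_n\rangle_V\,dt=\tfrac{1}{2}\|\boldsymbol{y}_0^n\|_H^2-\tfrac{1}{2}\|\boldsymbol{y}_n(T)\|_H^2+\int_I\langle\boldsymbol{f},\boldsymbol{y}_n\rangle_V\,dt,
\end{align*}
with the analogous relation for $\boldsymbol{y}$. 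Using weak lower semi-continuity of $\boldsymbol{z}\mapsto\|\boldsymbol{z}(T)\|_H^2$, strong convergence of $\boldsymbol{y}_0^n\to\boldsymbol{y}_0$ in $H$, and weak convergence $\boldsymbol{y}_n\rightharpoonup\boldsymbol{y}$ in $L^p(I,V)$, I conclude
\begin{align*}
\limsup_{n\to\infty}\int_I\langle\mathbfcal{A}\boldsymbol{y}_n,\boldsymbol{y}_n-\boldsymbol{y}\rangle_V\,dt\le 0,
\end{align*}
which, combined with the pseudo-monotonicity of $\mathbfcal{A}$, forces $\boldsymbol{\chi}=\mathbfcal{A}\boldsymbol{y}$ and finishes the proof.

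The step I expect to be genuinely delicate is this last one: the limsup inequality relies on weak convergence of $\boldsymbol{y}_n(T)$ in $H$ (rather than strong convergence of terminal traces), so the notion of pseudo-monotonicity invoked here must be robust enough to accept precisely that form of convergence of the tested sequence. This is exactly the subtlety that motivates the paper's introduction of Bochner pseudo-monotonicity, and in concrete problems where only the weaker Bochner-type convergences are available it is this identification step—and not the Galerkin construction or the a priori estimates—that is the actual bottleneck.
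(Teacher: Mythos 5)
The paper does not actually prove Theorem~\ref{thm:b}; it is cited as a consequence of the Browder--Br\'ezis theorem on pseudo-monotone perturbations of maximal monotone mappings, with $L\boldsymbol{x}=\frac{d\boldsymbol{x}}{dt}$ on $D(L)=\{\boldsymbol{x}\in W^{1,p,p'}(I,V,V^*)\mid \boldsymbol{x}(0)=\boldsymbol{0}\text{ in }H\}$ supplying the maximal monotone part. Your Galerkin scheme is therefore a genuinely different route, closer in spirit to the paper's proof of its own Theorem~\ref{4.1} than to the argument it invokes for Theorem~\ref{thm:b}.

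There is, however, a genuine gap in the construction step. You assert that the Carath\'eodory conditions for the Galerkin ODE system follow ``from the boundedness of $\mathbfcal{A}$''. They do not. Boundedness of $\mathbfcal{A}\colon L^p(I,V)\to L^{p'}(I,V^*)$ controls $\|\mathbfcal{A}\boldsymbol{x}\|_{L^{p'}(I,V^*)}$ on bounded subsets of $L^p(I,V)$, but it yields neither demicontinuity of $A(t)\colon V\to V^*$ for a.e.\ $t\in I$ (needed for continuity of $\boldsymbol{\alpha}\mapsto \boldsymbol{f}^n(t,\boldsymbol{\alpha})$) nor an $L^1$-majorant for $t\mapsto\|A(t)(\sum_k\alpha_k w_k)\|_{V^*}$ uniformly on compact coefficient sets. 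These are pointwise-in-$t$ statements about the family $\{A(t)\}_{t\in I}$ and are independent of the hypotheses on the induced operator; indeed the paper's Galerkin proof of Theorem~\ref{4.1} has to impose the additional conditions (C.1)--(C.3) precisely to make Carath\'eodory's theorem applicable. Under the bare hypotheses of Theorem~\ref{thm:b} the Galerkin ODE need not be solvable, which is why the standard treatments use the abstract surjectivity theorem instead of a Faedo--Galerkin scheme.

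A second problem is the limsup step. Weak convergence $\boldsymbol{y}_n\rightharpoonup\boldsymbol{y}$ in $L^p(I,V)$ does not give $\boldsymbol{y}_n(T)\rightharpoonup\boldsymbol{y}(T)$ in $H$, and the map $\boldsymbol{z}\mapsto\|\boldsymbol{z}(T)\|_H^2$ is not defined on $L^p(I,V)$, let alone weakly lower semicontinuous there; the trace convergence has to be extracted from the Galerkin equations themselves (testing against $v\varphi$ with $\varphi(T)=1$ and passing to the limit), as in Step~3.3 of the paper's proof of Theorem~\ref{4.1}. You flag this as the delicate point, but the diagnosis you attach to it is off target: since $\mathbfcal{A}$ is assumed here to be \emph{classically} pseudo-monotone on $L^p(I,V)$, once $\boldsymbol{y}_n\rightharpoonup\boldsymbol{y}$ in $L^p(I,V)$ and $\limsup_n\langle\mathbfcal{A}\boldsymbol{y}_n,\boldsymbol{y}_n-\boldsymbol{y}\rangle\leq 0$ are established, nothing more is demanded of the notion of pseudo-monotonicity. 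Bochner pseudo-monotonicity is introduced for the complementary situation in which $\mathbfcal{A}$ fails to be classically pseudo-monotone (e.g., the convective term) and the extra Galerkin information---the $L^\infty(I,H)$ bound and the pointwise weak convergence $(\boldsymbol{j}\boldsymbol{y}_n)(t)\rightharpoonup(\boldsymbol{j}\boldsymbol{y})(t)$ in $H$---must itself be fed into the passage to the limit, not merely used to verify the limsup condition.
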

This result is essentially a consequence of the main theorem on
pseudo-monotone perturbations of maximal monotone mappings, stemming
from Browder \cite{Bro68} and Brezis \cite{Bre68} (see also \cite[§32.4.]{Zei90B}). In doing so, one
interprets the time derivative
$\frac{d}{dt}:W^{1,p,p'}(I,V,V^*)\rightarrow L^{p'}(I,V^*)$ as a
maximal monotone mapping
\begin{align*}
  \begin{split}
    L\colon &D(L)\subseteq L^p(I,V)\rightarrow L^{p'}(I,V^*)\text{
      with }L\boldsymbol{x}:=\frac{d\boldsymbol{x}}{dt}\\\text{and
    }&D(L):=\{\boldsymbol{x}\in W^{1,p,p'}(I,V,V^*) \fdg
    \boldsymbol{x}(0)=\boldsymbol{0}\text{ in }H\}.
  \end{split}%\label{eq:1.4}
\end{align*}
Note that the maximal monotonicity of the operator $L$ can basically
be traced back to the generalized integration by parts formula
(cf. Proposition \ref{2.11}). For details we refer to
\cite{Lio69,GGZ74,Zei90B,Sho97,Rou05,Ru04}.

To illustrate the applicability of the above existence results we
consider as prototypical applications the steady and unsteady motion
of incompressible shear dependent fluids in a bounded domain
$\Omega \subset \setR^3$. The motion in the steady and unsteady
situation, resp., is governed by
\begin{equation}
  \label{pfluid}
  \begin{aligned}
    -\divo \bS(\bD\bu)+[\nabla \bu]
    \bfu+\nabla\pi&=\ff\qquad&&\text{in }  \Omega,\\
    \divo\bu&=0\qquad&&\text{in } \Omega,
  \end{aligned}
\end{equation}
and
\begin{equation}
  \label{pfluid-un}
  \begin{aligned}
    \partial _t\bu -\divo \bS(\bD\bu)+[\nabla \bu]
    \bu+\nabla\pi&=\ff\qquad&&\text{in } I\times \Omega,\\
    \divo\bu&=0\qquad&&\text{in } I\times \Omega, \\
    \bu(0)&=\bu_0\qquad&&\text{in } \Omega,
  \end{aligned}
\end{equation}
respectively. Here $\bfu=(u_1,u_2,u_3)^\top$ is the fluid velocity,
$\bfD\bfu$ its symmetric gradient,
i.e., $\bfD\bfu =\frac 12 (\bfD\bfu+\bfD\bfu^\top)$,
$\bS(\bfD\bfu)=(\delta + \abs{\bfD\bfu})^{p-2}\bfD\bfu$,
$p \in (1,\infty)$, $\delta\ge 0$, the extra stress tensor, $\pi$ the
pressure, $\ff=(f_1,f_2,f_3)^\top$ the external body force, and
$\bfu_0$ the initial velocity. Here we used the notation
$([\nabla \bfu] \bfu)_i = \sum_{j=1}^3 u_j \partial _j u_i$,
${i=1,2,3}$, for the convective term. Setting
${V:=W^{1,p}_{0,\divo}(\Omega)}$, $H:=L^2_{\divo} (\Omega)$, the spaces
of solenoidal vector fileds from $W^{1,p}_0(\Omega)$ and
$L^2(\Omega)$, resp., the first two terms in \eqref{pfluid} define
operators $S,B\colon V \to V^*$ through
$\langle S \bfu, \bfv\rangle_V:= \int_\Omega \bS(\bfD\bfu) \cdot \bfD
\bfv\, dx$,
$\langle B \bfu,\bfv\rangle_V:= -\int_\Omega \bfu \otimes \bfu \cdot
\bfD \bfv\, dx$. Note that
$\int_\Omega \nabla \pi \cdot \bfv \, dx =0$ for $\bfv \in V $ and
sufficiently smooth $\pi$.  It is well known that the operator $S$ is
a strictly monotone, bounded, coercive and continuous 
(cf.~\cite{GGZ74,Lio69}), and that, due to the compact embedding
$W^{1,p}_0(\Omega)\vnor L^q(\Omega) $, $q<\frac {3p}{3-p}$, the
operator $B$ is compact, bounded and $\langle B \bfu,\bfu\rangle_V=0$
if $p>9/5$ (cf.~\cite{Lio69}). Thus, $A:=S+B$ fulfils the
assumptions of Theorem~\ref{thm:a}, yielding the existence of
solutions $\bfu \in V$ of \eqref{pfluid} if $p > 9/5$. To treat
the unsteady  problem \eqref{pfluid-un} note that the operators $S,B$ induce
operators
$\mathbfcal{S}, \mathbfcal{B}:L^p(I,V)\rightarrow L^{p'}(I,V^*)$
through $(\mathbfcal{S}\boldsymbol{x})(t):=S(\boldsymbol{x}(t))$,
$(\mathbfcal{B}\boldsymbol{x})(t):=B(\boldsymbol{x}(t))$ for almost
every $t\in I$ and all $\boldsymbol{x}\in L^p(I,V)$. The induced
operator $\bfS$ inherits the properties of the operator $S$
(cf.~\cite[Chapter 30]{Zei90B}), i.e., $\bfS$ is a strictly monotone,
bounded, coercive and continuous operator. On the other hand the
operator $\bfB$ is still bounded for sufficiently large $p$,
e.g.~$p>3$. However, the operator $\bfB$ is for no $p$
compact\footnote{This failure of the compactness is due to the fact
  that no information of the time derivative has been taken into
  account. If $\bfB$ is considered as an operator from
  $ W^{1,p,p'}(I,V,V^*)$ to $(W^{1,p,p'}(I,V,V^*))^*$ one can show
  that $\bfB$ is compact. However, on these spaces the operator $\bfA$
  is not coercive, and Theorem~\ref{thm:b} is again not applicable.},
since the embedding
$L^p(I,W^{1,p}_0(\Omega))\vnor L^p(I,L^q(\Omega)) $,
$q<\frac {3p}{3-p}$, is not compact. In fact, given $f_n\weakto f$ in
$L^p(I)$ containing no strongly convergent subsequence and
$\bg_n \weakto \bg$ in $W^{1,p}_0(\Omega)$, the sequence
$\bfu_n(t):=f_n(t)\bg_n$ is bounded in $L^p(I,W^{1,p}_0(\Omega))$ but
does not contain any strongly convergent subsequence in
$L^p(I,L^q(\Omega)) $.  Thus, Theorem \ref{thm:b} can not be applied
to the operator $\bfA:= \bfS +\bfB$. Nevertheless, it was already
observed in \cite[Theorems~2.5.1, 3.1.2]{Lio69} that, using and adapting the
ideas of the proof of Theorem~\ref{thm:b}, one can show the existence
of solutions $\bfu \in W^{1,p,p'}(I,V,V^*)$ of \eqref{pfluid-un} if
$p>11/5$.
	
In fact, this situation is prototypical and not an exception. To be
more precise, assume that the operator
$\mathbfcal{A}:L^p(I,V)\rightarrow L^{p'}(I,V^*)$ is induced by a
family of operators $A(t)\colon V\to V^*$, ${t\in I}$. We can not
expect $\mathbfcal{A}:L^p(I,V)\rightarrow L^{p'}(I,V^*)$ to be
pseudo-monotone even if the operators $\{A(t)\}_{t\in I}$ are
pseudo-monotone. This is due to the fact that
$\boldsymbol{x}_n\rightharpoonup \boldsymbol{x}$ in $L^p(I,V)$ in
general does not imply
$\boldsymbol{x}_n(t) \rightharpoonup \boldsymbol{x}(t)$ in $V$ for
a.e.~$t\in I$ (cf.~Remark \ref{rem:2.16}). Thus, the pseudo-monotonicity
of the operators $A(t)$ can not be used.  However, adapting the ideas
of \cite{Lio69,Hir1,Hir2} one can show the existence of solutions to
the evolutionary problem in many cases (cf.~\cite{BR17} for a
treatment using this approach). The drawback of this approach is that
additional technical assumptions on the spaces have to be made in
order to use the Aubin--Lions lemma. These additional assumptions are
not natural and in fact even not needed. It is the first purpose of
this paper to prove an abstract existence theorem for evolutionary
problems (cf.~Theorem~\ref{4.1}) that avoids unnecessary technical
assumptions and is applicable if Theorem~\ref{thm:b} is not applicable
but \cite[Theorem~2.5.1]{Lio69} could be adapted.   This leads us to the new notion of {\bf Bochner
  pseudo-monotone} operators. Moreover, based on the methods in
\cite{LM87,Shi97,Pap97,RT01,Rou05,BR17}, we are able to prove that the
induced operator $\mathbfcal{A}$ of a family of pseudo-monotone
operators $\{A(t)\}_{t\in I}$ satisfying appropriate coercivity and
growth conditions (cf.~conditions
(\hyperlink{C.1}{C.1})--(\hyperlink{C.5}{C.5}) which are easily
verifiable in concrete applications) is Bochner pseudo-monotone. Note
that the operator $\bfA=\bfS +\bfB$, introduced above for the
treatment of problem \eqref{pfluid-un}, satisfies these
conditions. Consequently, Theorem~\ref{4.1} or Corollary~\ref{4.2} are
applicable, in contrast to Theorem~\ref{thm:b}, which is not.
This observation applies to many other applications,
especially those where the inducing operator contains a compact
part. Thus, it seems that Bochner pseudo-monotonicity plays the same
role for nonlinear evolution problems as classical pseudo-monotonicity
for time-independent nonlinear problems. This is due to the fact that
Bochner pseudo-monotonicity takes into account the informations both
from the operator and the time derivative.  In the same spirit, we
introduce the notion of {Bochner coercivity}, which generalizes the
usual coercivity of the operator in the sense that it also takes into
account the information coming from the time derivative based on 
Gronwall's inequality. % coming from the generalized integration by parts formula.

Consider now the problem \eqref{pfluid-un} without the convective
term. In this case Theorem~\ref{thm:b} is applicable. However, there
appears the restriction $p>6/5$, since the spaces $V, H$ have to form an evolution
triple. This lower bound is artificial and can be avoided if one works
on the intersection space $V\cap H$, at least in the case of a
monotone operator, as already observed in
\cite[Theorem~2.1.2bis]{Lio69}.  The second purpose of this paper is
to avoid artificial lower bounds based on problems with appropriate
embeddings. Thus, we introduce the notion of {\bf pre-evolution
  triples}, based on pull-back intersections, which generalize
evolution triples. Moreover, we develop the abstract theory of Bochner
pseudo-monotone operators immediately for pre-evolution triples.
	
The paper is organized as follows: In Section~\ref{sec:2} we introduce
the notation and some basic definitions and results concerning
Bochner-Lebesgue spaces, Bochner-Sobolev spaces, pull-back
intersections, evolution equations and introduce the notion of
pre-evolutions triples. In Section~\ref{sec:3} we introduce Bochner
pseudo-monotonicity and Bochner coercivity as appropriate extensions
of the concepts of pseudo-monotonicity and coercivity to the
evolutionary setting. In view of applications we will present some
sufficient conditions on operator families that imply these 
concepts. In Section~\ref{sec:4} we prove an existence result for
evolution equations with pre-evolution triples for abstract Bochner
pseudo-monotone and Bochner coercive operators as well as for
operators satisfying appropriate and easily verifiable sufficient
conditions. Section~\ref{sec:5} contains some illustrating example for
the before developed theory.

The paper is an extended and modified version of parts of the thesis
\cite{alex-master}. 
\section{Preliminaries}
\label{sec:2}
\subsection{Operators}
% \begin{center}
%   {\normalfont\textsc{Operators }}%Notation and conventions}}
% \end{center}
For a Banach space $X$ with norm $\|\cdot\|_X$ we denote by $X^*$ its
dual space equipped with the norm $\|\cdot\|_{X^*}$. The duality
pairing is denoted by $\langle\cdot,\cdot\rangle_X$. All occurring
Banach spaces are assumed to be real. By $D(A)$ we denote the domain
of definition of an operator $A:D(A)\subseteq X\rightarrow Y$, and
by $R(A):=\{Ax\mid x\in D(A)\}$ its range.
% \begin{center}
% 	{\normalfont\textsc{Notions of continuity and boundedness}}
% \end{center}
The following notions turn out to be useful in our investigation. %In
\begin{defn}\label{2.1}
  Let $(X,\|\cdot\|_X)$ and $(Y,\|\cdot\|_Y)$ be Banach spaces. The
  operator $A:D(A)\subseteq X\rightarrow Y$ is said to be
  \begin{description}[\textrm{(iii)}]
  \item[\textrm{(i)}] \textbf{bounded}, if for all bounded
    $M\subseteq D(A)\subseteq X$ the image $A(M)\subseteq Y$ is
    bounded.
  \item[\textrm{(ii)}] \textbf{coercive}, if $Y=X^*$, $D(A)$ is
    unbounded and
    $\lim_{\substack{\|x\|_X \to \infty\\x \in D(A)}} \frac {\langle
      Ax,x\rangle _X}{\|x\|_X}=\infty$.
  \item[\textrm{(iii)}] \textbf{pseudo-monotone}, if $Y=X^*$, $D(A)=X$ and for
    each sequence $(x_n)_{n\in\mathbb{N}}\subseteq X$ with
    \begin{gather}
      x_n\overset{n\rightarrow\infty}{\rightharpoonup}x\text{ in }X
      , \label{eq:1.1}
      \\
      \limsup_{n\rightarrow\infty}{\langle Ax_n,x_n-x\rangle_X}\leq
      0 \label{eq:1.2}
    \end{gather}
    it follows that
    $\langle Ax,x-y\rangle_X\leq \liminf_{n\rightarrow\infty}{\langle
      Ax_n,x_n-y\rangle_X}$ for all $y\in X$.
  \end{description}
\end{defn}

Note that pseudo-monotonicity together with boundedness compensate for the
absence of weak continuity of the nonlinear operator $A$, as for a
sequence $(x_n)_{n\in\mathbb{N}}\subseteq X$ satisfying \eqref{eq:1.1} and \eqref{eq:1.2} it follows that
$Ax_n\overset{n\rightarrow\infty}{\rightharpoonup}Ax$ in $X^*$. Also
note that the conditions \eqref{eq:1.1} and \eqref{eq:1.2} are natural,
since if $(x_n)_{n\in\mathbb{N}}\subseteq X$ is a sequence of
appropriate Galerkin approximations of the problem \eqref{eq:s},
then \eqref{eq:1.1} is a consequence of the demanded coercivity and
\eqref{eq:1.2} can be derived directly from the properties of the
Galerkin approximation.

\subsection{Pre-evolution triple and pull-back intersections}
\label{sec:2.2}
Existence results for the initial value problem \eqref{eq:1.3}
(cf.~\cite{Bre68,Lio69,GGZ74,Zei90B,Sho97,Rou05,Ru04}) usually require
an evolution triple structure $(V,H,j)$, i.e., $(V,\|\cdot\|_V)$ is a
separable, reflexive Banach space, $(H,(\cdot,\cdot)_H)$ a Hilbert
space and $j:V\to H$ an embedding, such that $R(j)$ is dense in $H$,
e.g. $V=W^{1,p}_0(\Omega)$, $H=L^2(\Omega)$ and $j=\text{id}$ for
$p\ge\frac{2d}{d+2}$ fulfil these requirements. This evolution triple
structure is primarily needed for the validity of an integration by
parts formula (cf.~Proposition \ref{2.11}). Note that especially the
existence of a dense embedding in the definition of evolution triples
limits the scope of application, since for example $W^{1,p}_0(\Omega)$
does not embed into $L^2(\Omega)$ if $1<p<\frac{2d}{d+2}$.

Lions in \cite{Lio69} circumvented this limitation in the case of
autonomous monotone operators ${A:V\to V^*}$ by modifying
the %standard
coercivity condition $\langle Av,v\rangle_V\ge c\,\|v\|_V^p$ to
${\langle Av,v\rangle_V\ge c\,[v]_V^p}$, where $[\,\cdot\,]_V$ is a
semi-norm on $V$ such that $[v]_V+\lambda\|v\|_H\ge \beta \|v\|_V$ for
all $v\in V$ and for some $\lambda,\beta>0$. If $A$ is generated by
the $p$-Laplace operator for $p \in (1, \frac {2d}{d+2})$ the above
situation is realized by $V=W^{1,p}_0(\Omega)\cap L^2(\Omega)$,
equipped with
$\norm{\,\cdot\,}_V =\norm{\,\cdot\,}_{W^{1,p}_0(\Omega)} +
\norm{\,\cdot\,}_{L^2(\Omega)}$, $H=L^2(\Omega)$ and
$[\,\cdot\,]_V= \|\cdot\|_{W^{1,p}_0(\Omega)}$.

We proceed similarly and consider a separable, reflexive Banach space
$V$, a Hilbert space $H$ such that $V\cap H$ exists. We equip
$V\cap H$ with the canonical sum norm
$\norm{\,\cdot\,}_V =\norm{\,\cdot\,}_V + \norm{\,\cdot\,}_H$, such
that trivially $V\cap H$ embeds into $H$ densely, i.e., $V\cap H$ and
$H$ form an evolution triple. Moreover, we relax the coercivity
condition further, since we take into account the information coming
from the time derivative. To make this precise
(cf.~Definition~\ref{def:pre-evol}) we need some facts on
intersections of Banach spaces.  To this end, we want to propose an
alternative point of view, which turns out to be both quite
comfortable and exact, in the sense that we do not need to assume any
identifications of spaces and the amount of occurring embeddings is
marginal. Nonetheless, we emphasize that the standard definition of
intersections of Banach spaces (cf.~\cite{BS88}) is equivalent to our
approach and all the following assertions remain true if we use the
standard framework in \cite{BS88}.
 
\begin{defn}[Embedding]\label{6.1}
	Let $(X,\tau_X)$ and $(Y,\tau_Y)$ be topological vector spaces. The
	operator $j:X\rightarrow Y$ is said to be an \textbf{embedding} if
	it is linear, injective and continuous. In this case we use the
	notation
	\begin{align*}
	X\overset{j}{\hookrightarrow}Y.
	\end{align*}
	If $X\subseteq Y$ and $j=\text{id}_X:X\to Y$, then we write
	$X\hookrightarrow Y$ instead.
\end{defn}

\begin{defn}[Compatible couple]\label{6.2}
	Let $(X,\|\cdot\|_X)$ and $(Y,\|\cdot\|_Y)$ be Banach spaces such
	that embeddings $e_X:X\rightarrow Z$ and $e_Y:Y\rightarrow Z$ into a
	Hausdorff vector space $(Z,\tau_Z)$ exist. Then we call 
	$(X,Y):=(X,Y,Z,e_X,e_Y)$ a \textbf{compatible couple}.
\end{defn}

\begin{defn}[Pull-back intersection of Banach spaces]\label{6.3}
  Let $(X,Y)$ be a compatible couple. Then the operator
  $j:=e_Y^{-1}e_X:e_X^{-1}(R(e_X)\cap R(e_Y)) \rightarrow Y$
  is well-defined and we denote by
  \begin{align*}
    X\cap_j Y:=e_X^{-1}(R(e_X)\cap R(e_Y)) \subseteq X
  \end{align*}
  the \textbf{pull-back intersection of $X$ and $Y$ in $X$ with
    respect to $j$}. Furthermore, $j$ is said to be the
  \textbf{corresponding intersection embedding}. If $X,Y\subseteq Z$
  with $e_X=\text{id}_X$ and $e_Y=\text{id}_Y$, we set
  $X\cap Y:=X\cap_jY$.
\end{defn}

The next proposition shows that $j:X\cap_jY \to Y$ is indeed an
embedding if $X\cap_j Y$ is equipped with an appropriate norm. 
\begin{prop}%[Completeness of $X\cap_j Y$]
  \label{6.4}
  Let $(X,Y)$ be a compatible couple. Then $X\cap_j Y$ is a vector
  space and equipped with norm
  \begin{align*}
    \|\cdot\|_{X\cap_j Y}:=\|\cdot\|_X+\|j\cdot\|_Y
  \end{align*}
  a Banach-space. Moreover,  $j:X\cap_j Y\to Y$ is an embedding.
\end{prop}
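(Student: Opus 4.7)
The plan is to verify the four things hiding in the statement: $X\cap_j Y$ is a linear subspace of $X$, the expression $\|\cdot\|_{X\cap_j Y}$ is in fact a norm, this norm is complete, and $j$ is an embedding in the sense of Definition~\ref{6.1}.

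First I would note that linearity of $e_X$ and $e_Y$ makes $R(e_X)$ and $R(e_Y)$ subspaces of $Z$, so their intersection is a subspace of $Z$, and since $e_X$ is linear the preimage $e_X^{-1}(R(e_X)\cap R(e_Y))$ is a subspace of $X$. Hence $X\cap_j Y$ is a vector space. The norm axioms for $\|\cdot\|_{X\cap_j Y}$ are now routine: homogeneity and the triangle inequality descend from those of $\|\cdot\|_X$ and $\|\cdot\|_Y$ together with the linearity of $j$, while positive-definiteness already follows from positive-definiteness of $\|\cdot\|_X$ alone.

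The core of the proof is completeness. I would take a Cauchy sequence $(x_n)_{n\in\mathbb{N}}\subseteq X\cap_j Y$ with respect to $\|\cdot\|_{X\cap_j Y}$. Then $(x_n)_{n\in\mathbb{N}}$ is Cauchy in $X$, and $(jx_n)_{n\in\mathbb{N}}$ is Cauchy in $Y$, so by completeness there exist $x\in X$ and $y\in Y$ with $x_n\to x$ in $X$ and $jx_n\to y$ in $Y$. The point is to show that these limits are compatible, i.e., $x\in X\cap_j Y$ and $jx=y$. For this I would apply $e_X$ and $e_Y$ (both continuous into the Hausdorff space $Z$) to obtain
\begin{equation*}
    e_X(x_n)\to e_X(x)\quad\text{and}\quad e_X(x_n)=e_Y(jx_n)\to e_Y(y)\quad\text{in }Z,
\end{equation*}
and invoke the Hausdorff property of $Z$ to conclude $e_X(x)=e_Y(y)\in R(e_X)\cap R(e_Y)$. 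By definition this forces $x\in X\cap_j Y$, and moreover $jx = e_Y^{-1}e_X(x) = y$. Convergence in the sum norm then follows since $\|x_n-x\|_{X\cap_j Y}=\|x_n-x\|_X+\|jx_n-jx\|_Y\to 0$.

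Finally I would check that $j:X\cap_j Y\to Y$ is an embedding: linearity is inherited from the linearity of $e_X$ and of $e_Y^{-1}$ on $R(e_Y)$; injectivity follows because $jx=0$ implies $e_X(x)=e_Y(jx)=0$, which together with the injectivity of $e_X$ gives $x=0$; continuity is immediate from the trivial estimate $\|jx\|_Y\leq \|x\|_X+\|jx\|_Y=\|x\|_{X\cap_j Y}$. The only truly non-mechanical step is the completeness argument, whose only subtlety is making sure the two limits agree in $Z$, and this is precisely where the Hausdorff assumption on $Z$ in Definition~\ref{6.2} is used.
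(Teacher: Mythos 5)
Your proof is correct, but it takes a genuinely different route from the paper. The paper proves completeness by citing Bennett--Sharpley \cite[Chapter 3, Theorem 1.3]{BS88} for the Banach space property of $R(e_X)\cap R(e_Y)$ under the norm $\|e_X^{-1}(\cdot)\|_X+\|e_Y^{-1}(\cdot)\|_Y$, and then transfers this property to $X\cap_j Y$ by observing that $e_X^{-1}:R(e_X)\cap R(e_Y)\to X\cap_j Y$ is an isometry; the linearity, injectivity, and continuity of $j$ are then dispatched in a line each. You instead give a self-contained completeness argument directly on $X\cap_j Y$: take a Cauchy sequence in the sum norm, extract limits $x\in X$ and $y\in Y$ separately, and use the Hausdorff property of $Z$ to identify $e_X(x)=e_Y(y)$, which forces $x\in X\cap_j Y$ and $jx=y$. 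This essentially re-proves the cited Bennett--Sharpley result in the pull-back picture, and it has the pedagogical advantage of making visible exactly where the Hausdorff hypothesis in Definition~\ref{6.2} is used (uniqueness of limits in $Z$), which the paper's citation-based proof leaves implicit. The paper's route is shorter and defers to the standard reference; yours is more elementary and self-contained. Both are sound.
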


\begin{proof}
  In \cite[Chapter 3, Theorem 1.3]{BS88} it is proved that
  $R(e_X)\cap R(e_Y)$ equipped with the norm
  $\|\cdot\|_{R(e_X)\cap R(e_Y)}:= \|e_X^{-1}(\cdot)\|_X +
  \|e_Y^{-1}(\cdot)\|_Y$ is a Banach space. Since
  $e_X^{-1}:R(e_X)\cap R(e_Y)\rightarrow X\cap_j Y$ is an isometry, if
  we equip $X\cap_j Y$ with the norm $\|\cdot\|_{X\cap_j Y}$, the space 
  $X\cap_j Y$ inherits the Banach space property of
  $R(e_X)\cap R(e_Y)$. The linearity and injectivity of  $j:X\cap_j
  Y\to Y$ are clear. The continuity of $j:X\cap_j Y \to Y$ follows directly from the
  definition of the norm in $X\cap_j Y$. \hfill$\square$
\end{proof}

\begin{rmk}[Fundamental properties of pull-back intersections]\label{6.5}
  \begin{description}[(iii)]
  \item[\textrm{(i)}] {\bf Consistency:} If $(X,\|\cdot\|_X)$ and
    $(Y,\|\cdot\|_Y)$ are Banach spaces such that an embedding
    ${j:X\rightarrow Y}$ exists, then $(X,Y)=(X,Y,Y,j,\text{id}_Y)$
    forms a compatible couple and it holds ${X\cap_j Y=X}$ with norm
    equivalence $\|\cdot\|_{X\cap_j Y}\sim\|\cdot\|_X$.
  \item[\textrm (ii)] {\bf Commutativity up to isomorphism:} For a
    compatible couple $(X,Y)$ the pull-back intersection
    $Y\cap_{j^{-1}} X$ of $X$ and $Y$ in $Y$ with respect to
    $j^{-1}=e_X^{-1}e_Y$ is well-defined as well. In addition,
    $j:X\cap_j Y\rightarrow Y\cap_{j^{-1}} X$ is an isometric
    isomorphism. Rephrased, pull-back intersections are thus
    commutative up to an isometric isomorphism.
  \item[\textrm (iii)] {\bf Associativity:} If
    $(X,Y)=(X,Y,Z,e_X,e_Y)$ and $(Y,W)=(Y,W,Z,e_Y,e_W)$ are compatible
    couples, $i:=e_W^{-1}e_X$, $j:=e_Y^{-1}e_X$ and $k:=e_W^{-1}e_Y$,
    then it holds $(X\cap_j Y)\cap_i W=X\cap_j(Y\cap_k W)$ and
    $\|\cdot\|_{(X\cap_j Y)\cap_i W}=\|\cdot\|_{X\cap_j(Y\cap_k W)}$.
  \end{description}
\end{rmk}

\begin{prop}\label{6.6}
  Let $(X,Y)$ be a compatible couple. Then it holds:
  \begin{description}[(iii)]
  \item[\text{(i)}] The graph
    $G(j):=\{(x,y)^\top \in X\times Y\fdg x\in X\cap_j Y, y=jx\}$ is closed in $X\times Y$.
  \item[\text{(ii)}] If $X$ and $Y$ are reflexive or separable, then
    $X\cap_j Y$ is as well.
  \item[\text{(iii)}] \textbf{First characterization of weak
      convergence in $X\cap_j Y$:} A sequence ${(x_n)_{n\in\mathbb{N}}\subset
    X\cap_jY}$ and $x \in X\cap_jY$ satisfy 
    $x_n\overset{n\rightarrow\infty}{\rightharpoonup}x\text{ in
    }X\cap_j Y$ if and only if
    $x_n\overset{n\rightarrow\infty}{\rightharpoonup}x\text{ in }X$
    and
    ${jx_n\overset{n\rightarrow\infty}{\rightharpoonup}jx}$  in $Y$.
  \item [\textrm (iv)] \textbf{Second characterization of weak
      convergence in $X\cap_j Y$:} In addition let $X$ be
    reflexive. A sequence $(x_n)_{n\in\mathbb{N}}\subset
    X\cap_jY$ and $x \in X\cap_jY$ satisfy 
    $x_n\overset{n\rightarrow\infty}{\rightharpoonup}x\text{ in
    }X\cap_j Y$ if and only if
   $\sup_{n\in\mathbb{N}}{\|x_n\|_X}<\infty$ and
    $jx_n\overset{n\rightarrow\infty}{\rightharpoonup}jx\text{ in }Y$.
  \end{description}
\end{prop}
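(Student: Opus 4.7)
The plan is to handle the four parts in order, since each later part uses the earlier ones. For (i), I would argue as follows: if $(x_n,y_n) \in G(j)$ with $(x_n,y_n) \to (x,y)$ in $X \times Y$, then continuity of $e_X$ and $e_Y$ gives $e_X(x_n) \to e_X(x)$ and $e_Y(y_n) \to e_Y(y)$ in $Z$; but $y_n = jx_n$ means $e_X(x_n) = e_Y(y_n)$ identically, so the Hausdorff property of $Z$ forces $e_X(x) = e_Y(y)$. This puts $x \in X \cap_j Y$ and $jx = y$, i.e.\ $(x,y) \in G(j)$.

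For (ii), the crucial observation is that the map $i\colon X \cap_j Y \to X \times Y$ defined by $i(x) := (x,jx)$ is an isometric embedding (when $X \times Y$ carries the sum norm) whose image is precisely $G(j)$, which is norm-closed by (i). Since products of two reflexive (resp.\ separable) Banach spaces are reflexive (resp.\ separable), and closed subspaces inherit both properties, the conclusion follows immediately.

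For (iii), the forward direction is immediate: the inclusion $X \cap_j Y \hookrightarrow X$ and the embedding $j\colon X \cap_j Y \to Y$ (Proposition \ref{6.4}) are continuous, hence weak-to-weak continuous. For the converse, every $\varphi \in (X \cap_j Y)^*$ corresponds via $i$ to a continuous functional on the closed subspace $G(j) \subset X \times Y$, which by Hahn--Banach extends to some $F \in (X \times Y)^* \cong X^* \times Y^*$. Writing $F(a,b) = f(a) + g(b)$ for suitable $f \in X^*$, $g \in Y^*$ yields $\varphi(u) = f(u) + g(ju)$ for all $u \in X \cap_j Y$, so the two hypotheses give $\varphi(x_n) = f(x_n) + g(jx_n) \to f(x) + g(jx) = \varphi(x)$.

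For (iv), the forward direction is a consequence of (iii) and the fact that weakly convergent sequences are bounded. The converse is the main obstacle, and the strategy is the standard subsequence principle combined with weak closedness of $G(j)$. Given $\sup_{n\in\mathbb N}\|x_n\|_X < \infty$ and $jx_n \rightharpoonup jx$ in $Y$, take any subsequence of $(x_n)$; by reflexivity of $X$ there is a further subsequence $(x_{n_k})$ with $x_{n_k} \rightharpoonup \tilde{x}$ in $X$ for some $\tilde{x}\in X$. Then $(x_{n_k}, jx_{n_k}) \rightharpoonup (\tilde{x}, jx)$ in $X \times Y$, and since $G(j)$ is norm-closed by (i) and convex, Mazur's theorem gives that it is weakly closed, so $(\tilde{x}, jx) \in G(j)$. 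This identifies $\tilde{x}$ as an element of $X \cap_j Y$ with $j\tilde{x} = jx$, whence $\tilde{x} = x$ by injectivity of $j$. The subsequence principle therefore yields $x_n \rightharpoonup x$ in $X$, and an application of (iii) concludes. The delicate point worth emphasizing is that reflexivity of $X$ is genuinely needed to produce the candidate weak limit $\tilde{x}$ before $j$ can pin it down, which is exactly why (iv) carries this extra hypothesis that (iii) does not.
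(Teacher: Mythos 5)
Your proof is correct and follows the same overall strategy as the paper --- exploit the fact that $x\mapsto (x,jx)^\top$ is an isometry of $X\cap_j Y$ onto the graph $G(j)\subseteq X\times Y$, then reduce everything to properties of closed subspaces of products. Parts (ii) and (iv) are essentially verbatim the paper's argument: (ii) via transfer of reflexivity/separability through the closed subspace $G(j)$, (iv) via Eberlein--\v{S}muljan on an arbitrary subsequence, weak closedness of $G(j)$ (convex and norm-closed), injectivity of $j$ to pin down the limit, and the standard subsequence principle.

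The only genuine divergence is in (i) and, to a lesser extent, the converse of (iii). The paper proves (i) by noting that $L\colon x\mapsto(x,jx)^\top$ is an isometric isomorphism from the Banach space $X\cap_j Y$ (whose completeness is Proposition \ref{6.4}, which in turn invokes \cite{BS88}) onto $G(j)$, so $G(j)$ is complete and hence closed. You instead verify closedness directly: for $(x_n,y_n)^\top\in G(j)$ with $(x_n,y_n)^\top\to(x,y)^\top$ in $X\times Y$, the continuity of $e_X$, $e_Y$ gives $e_X(x_n)\to e_X(x)$ and $e_Y(y_n)\to e_Y(y)$ in $Z$, while $y_n=jx_n$ forces $e_X(x_n)=e_Y(y_n)$; uniqueness of limits in the Hausdorff space $Z$ then yields $e_X(x)=e_Y(y)$, so $x\in X\cap_j Y$ and $y=jx$. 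This is more elementary and does not rely on Proposition \ref{6.4} or on the background Banach space result for $R(e_X)\cap R(e_Y)$. For the converse of (iii) the paper only remarks that the claim follows from the componentwise characterization of weak convergence in $X\times Y$ together with $L$; you make the underlying Hahn--Banach step explicit, writing an arbitrary $\varphi\in(X\cap_j Y)^*$ as $u\mapsto f(u)+g(ju)$ with $f\in X^*$, $g\in Y^*$. Both phrasings amount to the same thing (the identification of the weak topology on a closed subspace with the relative weak topology is itself an application of Hahn--Banach), but your version is self-contained and arguably clearer for a reader.
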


\begin{proof}
  \textbf{ad (i)} Let $X\times Y$ be the product of $X$ and $Y$, which
  equipped with norm $\|(x,y)^\top\|_{X\times Y}:=\|x\|_X+\|y\|_Y$ is
  a Banach space. Hence,
  \begin{align*}
    L:X\cap_jY\rightarrow G(j)\subseteq X\times Y \colon x \mapsto (x,jx)^\top
  \end{align*}
  is a linear isometric isomorphism of $X\cap_jY$ onto
  $G(j)$. Therefore, $G(j)$ is closed in $X\times Y$.
	
  \textbf{ad (ii)}: If $X$ and $Y$ are reflexive, then also
  $X\times Y$ is reflexive. Since $G(j)$ is closed in $X\times Y$, it
  is also reflexive. As $L$ is an isomorphism we finally transfer the
  reflexivity from $G(j)$ to $X\cap_jY$. If $X$ and $Y$ are separable,
  then also $X\times Y$ is separable and $G(j)$ as well. As $L$ is an
  isometric isomorphism $X\cap_jY$ has to be separable as well.
	
  \textbf{ad (iii)}: Follows from the fact that weak convergence in
  $X\times Y$ is characterized by weak convergence of all components
  in conjunction with the isometric isomorphism $L$.

  \textbf{ad (iv)}: %  Let $x_n\overset{n\rightarrow
  %   \infty}{\rightharpoonup} x$ in $X\cap_j Y$. From (iii) immediately
  % follows $\sup_{n\in\mathbb{N}}{\|x_n\|_X}<\infty$ and
  %   $jx_n\overset{n\rightarrow\infty}{\rightharpoonup}jx\text{ in
  % }Y$.
  The necessity follows immediately from (iii). To prove the
  sufficiency let  ${x \in X\cap_jY}$ and $(x_n)_{n\in\mathbb{N}} \subset X\cap_jY$
  satisfy $\sup_{n\in\mathbb{N}}{\|x_n\|_X}<\infty$ and
  $jx_n\overset{n\rightarrow\infty}{\rightharpoonup}jx\text{ in }Y$.
  Let $(x_n)_{n\in\Lambda}\subseteq X\cap_j Y$ with
  $\Lambda\subseteq \mathbb{N}$ be an arbitrary subsequence. In
  particular, $(x_n)_{n\in\Lambda}\subseteq X$ is bounded. Then
  Eberlein-\v{S}muljan's theorem yields the existence of both a
  subsequence $(x_n)_{n\in\Lambda_1}\subseteq X\cap_j Y$ with
  $\Lambda_1\subseteq \Lambda$ and an element $\tilde{x}\in X$ such
  that
  \begin{align*}
    x_n\overset{n\rightarrow\infty}{\rightharpoonup}\tilde{x} \quad \text{ in }X\;(n\in\Lambda_1).
  \end{align*}
  We have $((x_n,jx_n)^\top)_{n\in\Lambda_1}\subseteq G(j)$. As weak
  convergence of all components implies weak convergence in the
  corresponding Cartesian product we obtain
  \begin{align*}
    (x_n,jx_n)^\top\overset{n\rightarrow\infty}{\rightharpoonup}(\tilde{x},jx)^\top\quad
    \text{ in }X\times Y\;(n\in \Lambda_1). 
  \end{align*} 
  $G(j)$ is weakly closed in $X\times Y$, as it is convex and closed
  in $X\times Y$ (cf.~Proposition \ref{6.6}). In consequence, it holds
  $(\tilde{x},jx)^\top\in G(j)$, i.e., $\tilde{x}\in X\cap_j Y$ and
  $jx=j\tilde{x}$ in $Y$. From the injectivity of
  $j:X\cap_j Y\rightarrow Y$ we deduce further that $x=\tilde{x}$ in
  $X\cap_j Y$. Thus, the first characterization of weak convergence in
  pull-back intersections provides
  \begin{align*}
    x_n\overset{n\rightarrow\infty}{\rightharpoonup}x \quad \text{ in }X\cap_j Y\;(n\in \Lambda_1).
  \end{align*}
  Hence, $x\in X\cap_j Y$ is weak accumulation point of each
  subsequence of $(x_n)_{n\in\mathbb{N}}\subseteq X\cap_j Y$. The
  standard convergence principle (cf.~\cite[Kap.~I, Lemma 5.4]{GGZ74}) yields
  $x_n\overset{n\rightarrow\infty}{\rightharpoonup}x\text{ in }X\cap_j
  Y$.\hfill$\square$
\end{proof}

Now, we have created an appropriate framework to give a detailed description of the concept of pre-evolution triples.

\begin{defn}[Pre-evolution triple]\label{def:pre-evol}
  Let $(V,H):=(V,H,Z,e_V,e_H)$ be a compatible couple,
  $(V,\|\cdot\|_V)$ a separable, reflexive Banach space and
  $(H,(\cdot,\cdot)_H)$ a separable Hilbert space. In this situation
  the pull-back intersection of $V$ and $H$ is defined as
  $V\cap_jH:=e_V^{-1}\big (R(e_V) \cap R(e_H)\big )$, and the
  intersection embedding is defined as
  $j:=e_H^{-1} e_V: V\cap_jH \to H$. If
  \begin{align*}
    \overline{R(j)}^{\|\cdot\|_H}=\overline{j(V\cap_j H)}^{\|\cdot\|_H}=H,
  \end{align*}
  then the triple $(V,H,j)$ is said to be a \textbf{pre-evolution
    triple}.  Let $R:H\rightarrow H^*$ be the Riesz isomorphism with
  respect to $(\cdot,\cdot)_H$. As $j$ is a dense embedding the
  adjoint \mbox{$j^*:H^*\rightarrow (V\cap_j H)^*$} and therefore
  $e:=j^*Rj: V\cap _jH \rightarrow (V\cap _jH)^*$ are embeddings as
  well. We call $e$ the $\textbf{canonical\;embedding}$ of
  $(V,H,j)$. Note that %Furthermore, it holds
  \begin{align}\label{eq:iden}
    \langle ev,w\rangle_{V\cap_j H}=(jv,jw)_H\quad \text{ for all }v,w\in
    V\cap_j H.
  \end{align} 
\end{defn}

\begin{rmk}
	The notion of a pre-evolution triple generalizes the standard notion
	of an evolution triple. Note that an evolution triple is a
	pre-evolution triple, since $(V,H,H,j,\text{id}_H)$ is a compatible
	couple. Moreover, the intersection embedding is the embedding $j$, and
	we have $V=V\cap _jH$ with norm equivalence
	$\|\cdot \|_{V} \sim \|\cdot \|_{V\cap _j H}$. Thus, if the
	pre-evolution triple is an evolution triple we can just replace the
	intersection $V\cap_j H$ by $V$. On the other hand if $(V,H,j)$ is a
	pre-evolution triple, then $(V\cap _j H, H,j)$ is an evolution triple.
\end{rmk}

\subsection{Bochner-Lebesgue spaces}
In this paragraph we collect some well known results concerning
Bochner-Lebesgue spaces, which will be used in the following. By
$(X,\|\cdot\|_X)$ and $(Y,\|\cdot\|_Y)$ we always denote Banach spaces, by $I:=\left(0,T\right)$, with $0<T<\infty$, a finite
time interval and by $\mathbfcal{M}(I,X)$ the vector space of Bochner measurable functions
from $I$ into $X$.

\begin{prop}\label{2.2}
  Let $\boldsymbol{x}:I\rightarrow X$ be a function such that there
  exists a sequence $(\boldsymbol{x}_n)_{n\in\mathbb{N}}\subseteq \mathbfcal{M}(I,X)$ with 
  \begin{align*}
    \boldsymbol{x}_n(t)\overset{n\rightarrow\infty}{\rightharpoonup}\boldsymbol{x}(t)\text{ in }X
  \end{align*}
	for almost every $t\in I$. Then $\boldsymbol{x}\in \mathbfcal{M}(I,X)$.
\end{prop}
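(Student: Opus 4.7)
My plan is to invoke Pettis' measurability theorem, which characterizes Bochner measurability as the conjunction of weak measurability and almost separable range. So I would reduce the problem to verifying these two conditions for $\boldsymbol{x}$.

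For weak measurability, the argument is short: for any $x^{*}\in X^{*}$, each scalar function $t\mapsto \langle x^{*},\boldsymbol{x}_{n}(t)\rangle_{X}$ is measurable, since $\boldsymbol{x}_{n}\in\mathbfcal{M}(I,X)$ implies $\boldsymbol{x}_{n}$ is weakly measurable. The pointwise a.e.\ weak convergence $\boldsymbol{x}_{n}(t)\rightharpoonup \boldsymbol{x}(t)$ gives $\langle x^{*},\boldsymbol{x}_{n}(t)\rangle_{X}\to\langle x^{*},\boldsymbol{x}(t)\rangle_{X}$ a.e., so the limit $t\mapsto\langle x^{*},\boldsymbol{x}(t)\rangle_{X}$ is measurable (modifying on the null set if necessary). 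Hence $\boldsymbol{x}$ is weakly measurable.

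The harder step, which I expect to be the real obstacle, is the almost separable range condition, because weak limits do not a priori remain in a given separable set. I would proceed as follows. For each $n\in\mathbb{N}$, Pettis' theorem applied to $\boldsymbol{x}_{n}$ furnishes a null set $N_{n}\subseteq I$ such that $\boldsymbol{x}_{n}(I\setminus N_{n})$ is separable. Let $N_{0}\subseteq I$ denote the null set on which the weak convergence hypothesis fails, and set $N:=N_{0}\cup\bigcup_{n\in\mathbb{N}}N_{n}$, which is still a null set. Define
\begin{equation*}
M:=\bigcup_{n\in\mathbb{N}}\boldsymbol{x}_{n}(I\setminus N)\subseteq X,
\end{equation*}
which, as a countable union of separable sets, is separable. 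Let $X_{0}:=\overline{\operatorname{span}(M)}^{\|\cdot\|_{X}}$; this is a closed, hence weakly closed, separable subspace of $X$. For every $t\in I\setminus N$ we have $\boldsymbol{x}_{n}(t)\in M\subseteq X_{0}$ for all $n$, and $\boldsymbol{x}_{n}(t)\rightharpoonup \boldsymbol{x}(t)$ in $X$. Weak closedness of $X_{0}$ forces $\boldsymbol{x}(t)\in X_{0}$. Thus $\boldsymbol{x}(I\setminus N)\subseteq X_{0}$, so $\boldsymbol{x}$ is almost separably valued.

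Combining these two verifications, Pettis' theorem yields $\boldsymbol{x}\in\mathbfcal{M}(I,X)$. The key conceptual point — and the only non-routine ingredient — is recognizing that one passes from the pointwise separability of the approximating sequence to a single separable subspace containing all values of the limit via the fact that closed linear subspaces are weakly closed (a Hahn--Banach / Mazur-type observation); everything else is bookkeeping with null sets.
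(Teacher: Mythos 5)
Your proof is correct and follows essentially the same route as the paper's: both invoke Pettis' theorem and use the closed span of $\bigcup_n \boldsymbol{x}_n(I\setminus N)$ to reduce to a separable subspace. The paper leaves implicit the point that you make explicit — that the weak limit $\boldsymbol{x}(t)$ lands in this separable subspace because closed subspaces are weakly closed (Mazur) — and then simply cites a reference for the separable case, whereas you finish by verifying weak measurability directly; this makes your argument a bit more self-contained but not a different method.
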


\begin{proof}
  We apply Pettis' theorem (cf. \cite[Chapter V, Theorem: (Pettis)]{Yos80}) to obtain Lebesgue measurable sets
  $N_n\subseteq I$, $n\in\mathbb{N}$, such that $I\setminus N_n$ are null sets
  and $\boldsymbol{x}_n(N_n)$ are separable.  Thus, if we
  replace $X$ by the closure of
  $\text{span}\{\bigcup_{n\in\mathbb{N}}{\boldsymbol{x}_n(N_n)}\}$,
  it turns out that it suffices to treat the case of separable
  $X$. For a proof of the latter one we refer to \cite[Folgerung
  1.10]{Ru04}.\hfill$\square$
\end{proof}

\begin{prop}\label{2.3}
  Let $(X,\|\cdot\|_X)$ be a reflexive Banach space and $1< p<\infty$. If
  the sequence $(\boldsymbol{x}_n)_{n\in\mathbb{N}}\subseteq L^p(I,X)$ is bounded
  and satisfies 
  \begin{align*}
    \boldsymbol{x}_n(t)\overset{n\rightarrow\infty}{\rightharpoonup}\boldsymbol{x}(t)\text{ in }X
  \end{align*}
  for almost every $t\in I$, then
  $\boldsymbol{x}_n\overset{n\rightarrow\infty}{\rightharpoonup}\boldsymbol{x}\text{
    in }L^p(I,X).$
\end{prop}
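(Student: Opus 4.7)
The plan is to combine the reflexivity of $L^p(I,X)$ with a subsequence-convergence argument, using Vitali's theorem to identify the weak limit.

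First I would show that $\boldsymbol{x}\in L^p(I,X)$. Bochner measurability of $\boldsymbol{x}$ is exactly the content of Proposition \ref{2.2}. Lower semicontinuity of the norm under weak convergence in $X$ gives $\|\boldsymbol{x}(t)\|_X\leq \liminf_{n\to\infty}\|\boldsymbol{x}_n(t)\|_X$ for almost every $t\in I$, so Fatou's lemma together with the uniform $L^p(I,X)$-bound for $(\boldsymbol{x}_n)_{n\in\mathbb{N}}$ yields $\boldsymbol{x}\in L^p(I,X)$.

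Next I would argue by the standard subsequence convergence principle already invoked in the proof of Proposition \ref{6.6}(iv). Since $X$ is reflexive and $1<p<\infty$, the space $L^p(I,X)$ is reflexive, so by Eberlein--\v{S}muljan every subsequence of the bounded sequence $(\boldsymbol{x}_n)_{n\in\mathbb{N}}$ admits a weakly convergent sub-subsequence $\boldsymbol{x}_{n_k}\rightharpoonup \boldsymbol{y}$ in $L^p(I,X)$ for some $\boldsymbol{y}\in L^p(I,X)$. It then suffices to prove $\boldsymbol{y}=\boldsymbol{x}$. Since all $\boldsymbol{x}_n$ and $\boldsymbol{x}$ are Bochner measurable, we may replace $X$ by the closed subspace spanned by the countable union of their essential ranges, and so assume without loss of generality that $X$ is separable; being reflexive as well, $X^*$ is then separable too.

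To identify $\boldsymbol{y}=\boldsymbol{x}$, I would test the weak convergence against the dense subspace of simple functions in $L^{p'}(I,X^*)$, which (by reflexivity of $X$) sits inside $L^p(I,X)^*$. For any $x^*\in X^*$ and any measurable $E\subseteq I$, the pointwise weak convergence $\boldsymbol{x}_n(t)\rightharpoonup \boldsymbol{x}(t)$ in $X$ gives the scalar a.e.\ convergence $\langle x^*,\boldsymbol{x}_n(t)\rangle_X\to\langle x^*,\boldsymbol{x}(t)\rangle_X$, while $\|\boldsymbol{x}_n(\cdot)\|_X$ is bounded in $L^p(I)$; since $|I|<\infty$ and $p>1$, this forces uniform integrability of $(\langle x^*,\boldsymbol{x}_n(\cdot)\rangle_X)_{n\in\mathbb{N}}$, so Vitali's theorem delivers $\int_E\langle x^*,\boldsymbol{x}_n(t)\rangle_X\,dt\to \int_E\langle x^*,\boldsymbol{x}(t)\rangle_X\,dt$. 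Comparing with the integral against $\chi_E x^*\in L^{p'}(I,X^*)$ applied to the weak limit $\boldsymbol{y}$, we obtain $\int_E\langle x^*,\boldsymbol{y}(t)-\boldsymbol{x}(t)\rangle_X\,dt=0$ for all such $E$ and $x^*$, hence $\langle x^*,\boldsymbol{y}(t)-\boldsymbol{x}(t)\rangle_X=0$ a.e.\ in $I$. Running $x^*$ through a countable dense subset of the separable space $X^*$ and invoking Hahn--Banach yields $\boldsymbol{y}(t)=\boldsymbol{x}(t)$ for a.e.\ $t\in I$. The subsequence convergence principle then gives $\boldsymbol{x}_n\rightharpoonup \boldsymbol{x}$ in $L^p(I,X)$.

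The main obstacle is the identification step in the third paragraph: we need both the duality $L^p(I,X)^*\cong L^{p'}(I,X^*)$ (which is ensured here by reflexivity of $X$) and a mechanism to promote a.e.\ weak convergence in $X$ to integrated convergence against $X^*$-valued test functions; the latter is precisely where the uniform integrability extracted from the $L^p$-bound and the finiteness of $I$ become essential, and where separability of $X^*$ is used to separate points a.e.
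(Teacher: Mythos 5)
Your argument is correct, but it takes a slightly different route than the paper. The paper proves the weak convergence directly: fixing an arbitrary $\boldsymbol{x}^*\in L^{p'}(I,X^*)\cong(L^p(I,X))^*$, it observes $\langle\boldsymbol{x}^*(t),\boldsymbol{x}_n(t)\rangle_X\to\langle\boldsymbol{x}^*(t),\boldsymbol{x}(t)\rangle_X$ a.e., extracts uniform integrability from the $L^p$-bound via H\"older's inequality, and concludes by Vitali's theorem that $\langle\boldsymbol{x}^*,\boldsymbol{x}_n\rangle_{L^p(I,X)}\to\langle\boldsymbol{x}^*,\boldsymbol{x}\rangle_{L^p(I,X)}$ (after a harmless reduction to $\boldsymbol{x}=\boldsymbol{0}$). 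You instead go through Eberlein--\v{S}muljan to extract a weakly convergent sub-subsequence and then identify its limit by testing only against simple functions $\chi_E x^*$, which requires the extra reduction to separable $X$ and the use of a countable dense subset of $X^*$ to conclude $\boldsymbol{y}=\boldsymbol{x}$ a.e. Both arguments hinge on exactly the same core: the duality $L^{p'}(I,X^*)\cong L^p(I,X)^*$ (valid because $X$ is reflexive) and Vitali's theorem applied to a uniformly integrable family of scalar pairings. The paper's route is shorter because it already tests against a general $\boldsymbol{x}^*$ and so never needs the subsequence machinery or the separability argument; your route is more explicit about the step that $\boldsymbol{x}\in L^p(I,X)$ (Proposition \ref{2.2} plus Fatou), which the paper uses tacitly in its reduction to the case $\boldsymbol{x}=\boldsymbol{0}$.
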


\begin{proof} It suffices to treat the case
  $\boldsymbol{x}=\boldsymbol{0}$ in $L^p(I,X)$. For arbitrary
  $\boldsymbol{x}^*\in L^{p'}(I,X^*)\cong (L^p(I,X))^*$ we get 
  $\langle  \boldsymbol{x}^*(t),\boldsymbol{x}_n(t)\rangle_X\overset{n\rightarrow\infty}{\rightarrow}
  0$ for almost every $t\in I$. In particular, for Lebesgue measurable
  $E\subseteq I$ we obtain
  \begin{align*}
    \int_E{\vert\langle
    \boldsymbol{x}^*(s),\boldsymbol{x}_n(s)\rangle_X\vert\,ds}\leq
    \|\boldsymbol{x}^*\chi_E\|_{L^{p'}(I,X^*)}\|\boldsymbol{x}_n\|_{L^p(I,X)}\leq
    c\|\boldsymbol{x}^*\chi_E\|_{L^{p'}(I,X^*)}, 
  \end{align*}
  where we exploited the boundedness of
  $(\boldsymbol{x}_n)_{n\in\mathbb{N}}\subseteq L^p(I,X)$. Thus,
  $(\langle
  \boldsymbol{x}^*(\cdot),\boldsymbol{x}_n(\cdot)\rangle_X)_{n\in\mathbb{N}}\subseteq
  L^1(I)$ is uniformly integrable and Vitali's theorem in conjunction
  with the representation of the duality product in Bochner-Lebesgue
  spaces yields
  $\langle \boldsymbol{x}^*,\boldsymbol{x}_n\rangle_{L^p(I,X)}
  \overset{n\rightarrow\infty}{\rightarrow}0$.\hfill$\square$
\end{proof}

\begin{rmk}\label{rem:2.16}
  The converse implication in Proposition \ref{2.3} is in
  general not true. This can be seen by the following easy example. Let
  $I=\left(0,2\pi\right)$, $p\in \left(1,\infty\right)$,
  $X=\mathbb{R}$ and
  $(\boldsymbol{x}_n)_{n\in\mathbb{N}}\subseteq
  L^\infty(I,\mathbb{R})$, given via $\boldsymbol{x}_n(t)=\sin(nt)$
  for every $t\in I$ and all $n\in\mathbb{N}$. Then, there holds
  $\boldsymbol{x}_n\rightharpoonup\boldsymbol{0}$ in
  $L^p(I,\mathbb R)$ $(n\to\infty)$, but not
  $\boldsymbol{x}_n(t)\weakto 0$ for almost every $t\in I$ $(n\to\infty)$.
\end{rmk}

\begin{defn}[Induced operator]\label{ind}
	Let $A(t):X\to Y$, $t\in I$, be a family of operators with the following properties:
	\begin{description}
		\item[\textbf{(C.1)}]\hypertarget{C.1} $A(t):X\rightarrow Y$ is demi-continuous for
		almost every $t\in I$.
		\item[\textbf{(C.2)}]\hypertarget{C.2} $A(\cdot)x:I\rightarrow Y$ is Bochner measurable for
		all $x\in X$. 
	\end{description}
Then we define  the \textbf{induced operator} $\mathbfcal{A}:\mathbfcal{M}(I,X)\to\mathbfcal{M}(I,Y)$ of $\{A(t)\}_{t\in I}$ by
\begin{align*}
  (\mathbfcal{A}\boldsymbol{x})(t)=A(t)(\boldsymbol{x}(t))\quad\text{
  in }Y %\label{eq:ind} 
\end{align*}
for almost every $t\in I$ and all $\boldsymbol{x}\in\mathbfcal{M}(I,X)$.
\end{defn}

In the case $X=V$ and $Y=V^*$, where $(V,H,j)$ is an evolution triple,
the well-definedness of the induced operator is proved in
\cite{Zei90B}. The next lemma treats the general case. 

\begin{lem}\label{inda}
  Let $A(t):X\to Y$, $t\in I$, be a family of operators satisfying
  (\hyperlink{C.1}{C.1}) and (\hyperlink{C.2}{C.2}). Then the induced
  operator $\mathbfcal{A}:\mathbfcal{M}(I,X)\to\mathbfcal{M}(I,Y)$ is
  well-defined.
\end{lem}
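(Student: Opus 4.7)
The plan is to reduce the problem to the two hypotheses (\hyperlink{C.1}{C.1}) and (\hyperlink{C.2}{C.2}) by approximating a generic Bochner measurable $\boldsymbol{x}\in\mathbfcal{M}(I,X)$ by simple functions and then passing to the limit via demi-continuity, closing the argument with Proposition~\ref{2.2}.

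First I would fix $\boldsymbol{x}\in\mathbfcal{M}(I,X)$ and, using the definition of Bochner measurability, pick a sequence of simple functions $\boldsymbol{s}_n:I\to X$ of the form $\boldsymbol{s}_n=\sum_{k=1}^{N_n} x_{n,k}\chi_{E_{n,k}}$, with measurable disjoint $E_{n,k}\subseteq I$ and $x_{n,k}\in X$, such that $\boldsymbol{s}_n(t)\to\boldsymbol{x}(t)$ in $X$ for almost every $t\in I$. The disjointness means that for every $t$ at most one indicator is non-zero, so
\begin{align*}
(\mathbfcal{A}\boldsymbol{s}_n)(t)=A(t)(\boldsymbol{s}_n(t))=\sum_{k=1}^{N_n} A(t)(x_{n,k})\,\chi_{E_{n,k}}(t)\quad\text{in }Y
\end{align*}
for every $t\in I$. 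Each map $t\mapsto A(t)(x_{n,k})$ lies in $\mathbfcal{M}(I,Y)$ by (\hyperlink{C.2}{C.2}), and the product with the scalar measurable function $\chi_{E_{n,k}}$ is still Bochner measurable, so $\mathbfcal{A}\boldsymbol{s}_n\in\mathbfcal{M}(I,Y)$.

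Next I would use (\hyperlink{C.1}{C.1}): on the full-measure set where both $\boldsymbol{s}_n(t)\to\boldsymbol{x}(t)$ in $X$ and $A(t)$ is demi-continuous, strong convergence of the arguments yields
\begin{align*}
A(t)(\boldsymbol{s}_n(t))\overset{n\to\infty}{\rightharpoonup} A(t)(\boldsymbol{x}(t))\quad\text{in }Y.
\end{align*}
Thus $(\mathbfcal{A}\boldsymbol{x})(t)$ is the a.e.\ weak limit in $Y$ of the sequence $(\mathbfcal{A}\boldsymbol{s}_n)(t)\in\mathbfcal{M}(I,Y)$, and Proposition~\ref{2.2} then gives $\mathbfcal{A}\boldsymbol{x}\in\mathbfcal{M}(I,Y)$, proving well-definedness.

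The only delicate point I anticipate is the combinatorial step of writing out $A(t)(\boldsymbol{s}_n(t))$ as a Bochner measurable function: because $A(t)$ is non-linear, one must exploit the fact that the sets $E_{n,k}$ in the representation of a simple function can (and should) be chosen pairwise disjoint, so that the nonlinear operator acts on at most one summand at any given $t$. Apart from this bookkeeping, the argument is a standard simple-function approximation plus a demi-continuity passage to the limit, with Proposition~\ref{2.2} providing the bridge.
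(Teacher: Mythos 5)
Your approach matches the paper's almost exactly: approximate $\boldsymbol{x}$ by simple functions, apply (\hyperlink{C.2}{C.2}) to obtain Bochner measurability of $\mathbfcal{A}\boldsymbol{s}_n$, invoke demi-continuity (\hyperlink{C.1}{C.1}) for the a.e.\ weak convergence, and close via Proposition~\ref{2.2}. There is, however, a small slip in your displayed formula, where you assert
\[
A(t)(\boldsymbol{s}_n(t))=\sum_{k=1}^{N_n} A(t)(x_{n,k})\,\chi_{E_{n,k}}(t)\qquad\text{for every }t\in I.
\]
For $t\in I\setminus\bigcup_k E_{n,k}$ the left-hand side equals $A(t)(0)$ while the right-hand side is $0$, and since $A(t)$ is nonlinear $A(t)(0)$ need not vanish. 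This is exactly the nonlinearity subtlety you flag in your final paragraph, but the disjointness of the $E_{n,k}$ alone does not resolve it; one must also account for the complement. The fix is to add the missing term, writing
\[
(\mathbfcal{A}\boldsymbol{s}_n)(t)=\sum_{k=1}^{N_n} A(t)(x_{n,k})\,\chi_{E_{n,k}}(t)
+A(t)(0)\,\chi_{I\setminus\bigcup_k E_{n,k}}(t),
\]
and to observe that $t\mapsto A(t)(0)$ is Bochner measurable by (\hyperlink{C.2}{C.2}) applied with $x=0$. The paper's proof handles this in precisely the same way. With that correction your argument is complete and coincides with the paper's.
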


\begin{proof}
  For $\boldsymbol{x}\in \mathbfcal{M}(I,X)$ there exists a sequence
  of simple functions
  $(\boldsymbol{s}_n)_{n\in\mathbb{N}}\subseteq \mathcal{S}(I,X)$,
  i.e., $\boldsymbol{s}_n(t)=\sum_{i=1}^{k_n}{s^n_i\chi_{E_i^n}(t)}$,
  where $s^n_i\in X$, $k_n\in\mathbb{N}$ and
  $E_i^n\in \mathcal{L}^1(I)$ with $E_i^n\cap E_j^n=\emptyset$ for
  $i\neq j$, which is converging almost everywhere to
  $\boldsymbol{x}(t)$ in $X$. Due to
  (\hyperlink{C.2}{C.2}) %the functions
  $\mathbfcal{A}s_i^n=A(\cdot)s_i^n:I\rightarrow Y$ and therefore also
  \begin{align*}
    \mathbfcal{A}\boldsymbol{s}_n=\sum_{i=1}^{k_n}{\chi_{E_i^n}\mathbfcal{A}
    s_i^n+\chi_{\big\{I\setminus\bigcup_{i=1}^{k_n}{E_i^n}\big\}}\mathbfcal{A}0}:
    I\rightarrow Y
  \end{align*}
  are Bochner measurable and converge almost everywhere weakly to
  $(\mathbfcal{A}\boldsymbol{x})(t)$ in $Y$ due to
  (\hyperlink{C.1}{C.1}). Thus, Proposition \ref{2.2} ensures
  $\mathbfcal{A}\boldsymbol{x}\in \mathbfcal{M}(I,Y)$. \hfill$\square$
\end{proof}
The next proposition shows that linear and continuous operators
between Banach spaces transmit their properties to the induced
operator between Bochner-Lebesgue spaces.

\begin{prop}%[Linear-induced operators between Bochner-Lebesgue spaces]
  \label{2.4}
  Let  $1\leq p\leq \infty$ and let $A:X\rightarrow Y$ be linear and
  continuous. Then the induced operator
  $\mathbfcal{A}$ 
%  , defined by $(\mathbfcal{A}\boldsymbol{x})(t):=A(\boldsymbol{x}(t))$ for almost
%  every $t\in I$ and all $\boldsymbol{x}\in  L^p(I,X)$,
  is well-defined, linear and continuous as an operator from
  $L^p(I,X)$ into $L^p(I,Y)$. Furthermore, it
  holds:
  \begin{description}[{(iii)}]
  \item[{(i)}]
    $A\left(\int_I{\boldsymbol{x}(s)
       \, ds}\right)=\int_I{(\mathbfcal{A}\boldsymbol{x})(s)\,ds}\text{ in
    }Y$ for all $\boldsymbol{x}\in L^p(I,X)$.
  \item[{(ii)}] If $A:X\rightarrow Y$ is additionally
    injective, then $\mathbfcal{A}:L^p(I,X)\rightarrow L^p(I,Y)$ is
    injective as well. In particular, the inverse function
    $\mathbfcal{A}^{-1}:R(\mathbfcal{A})\rightarrow L^p(I,X)$ is
    well-defined and satisfies
    $(\mathbfcal{A}^{-1}\boldsymbol{y})(t)=A^{-1}(\boldsymbol{y}(t))$
    for almost every $t\in I$ and all
    $\boldsymbol{y}\in R(\mathbfcal{A})$.
  \item[{(iii)}] If $A:X\rightarrow Y$ is an isomorphism, then
    also $\mathbfcal{A}:L^p(I,X)\rightarrow L^p(I,Y)$ is an
    isomorphism. 
  \end{description}
\end{prop}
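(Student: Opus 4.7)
The plan is to leverage Lemma \ref{inda} for well-definedness and then reduce everything to pointwise identities plus standard Bochner-integral approximation. First I would check that the constant family $A(t):=A$, $t\in I$, satisfies conditions (\hyperlink{C.1}{C.1}) and (\hyperlink{C.2}{C.2}): continuity of $A$ yields demi-continuity, and for each $x\in X$ the map $t\mapsto Ax$ is constant, hence trivially Bochner measurable. Lemma \ref{inda} therefore supplies $\mathbfcal{A}:\mathbfcal{M}(I,X)\to\mathbfcal{M}(I,Y)$. The pointwise estimate $\|(\mathbfcal{A}\boldsymbol{x})(t)\|_Y=\|A(\boldsymbol{x}(t))\|_Y\leq\|A\|_{\mathcal{L}(X,Y)}\|\boldsymbol{x}(t)\|_X$ upgrades this to a bounded operator $\mathbfcal{A}:L^p(I,X)\to L^p(I,Y)$ with $\|\mathbfcal{A}\|\leq\|A\|_{\mathcal{L}(X,Y)}$, and linearity is immediate from the pointwise definition together with linearity of $A$.

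For (i), I would first verify the identity on simple functions $\boldsymbol{s}=\sum_{i=1}^k s_i\chi_{E_i}$ by direct computation, using linearity of $A$ and the elementary definition of the Bochner integral on simple functions: $A\bigl(\int_I\boldsymbol{s}(t)\,dt\bigr)=\sum_i \lambda(E_i)A(s_i)=\int_I(\mathbfcal{A}\boldsymbol{s})(t)\,dt$. For general $\boldsymbol{x}\in L^p(I,X)$ I would choose simple functions $\boldsymbol{s}_n\to\boldsymbol{x}$ in $L^1(I,X)$ (note that $I$ being bounded gives $L^p(I,X)\hookrightarrow L^1(I,X)$ for every $p\in[1,\infty]$) and pass to the limit using continuity of $A$, of the induced operator $\mathbfcal{A}:L^1(I,X)\to L^1(I,Y)$, and of the Bochner integral viewed as a continuous map $L^1(I,X)\to X$ and $L^1(I,Y)\to Y$, respectively.

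For (ii), injectivity of $\mathbfcal{A}$ is immediate: if $\mathbfcal{A}\boldsymbol{x}=\boldsymbol{0}$ in $L^p(I,Y)$ then $A(\boldsymbol{x}(t))=0$ in $Y$ for almost every $t\in I$, whence $\boldsymbol{x}(t)=0$ in $X$ for almost every $t\in I$ by the injectivity of $A$, so $\boldsymbol{x}=\boldsymbol{0}$ in $L^p(I,X)$; the stated formula for $\mathbfcal{A}^{-1}$ then just records the definition of the induced operator on the range $R(\mathbfcal{A})$. For (iii), the open mapping theorem provides a linear and continuous inverse $A^{-1}:Y\to X$, and applying the first part of the proposition to $A^{-1}$ yields a bounded linear induced operator $\mathbfcal{B}:L^p(I,Y)\to L^p(I,X)$. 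The pointwise identities $A\circ A^{-1}=\mathrm{id}_Y$ and $A^{-1}\circ A=\mathrm{id}_X$ then give $\mathbfcal{A}\circ\mathbfcal{B}=\mathrm{id}_{L^p(I,Y)}$ and $\mathbfcal{B}\circ\mathbfcal{A}=\mathrm{id}_{L^p(I,X)}$, so $\mathbfcal{A}$ is an isomorphism with $\mathbfcal{A}^{-1}=\mathbfcal{B}$.

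There is essentially no serious obstacle here; the only mild subtlety is accommodating $p=\infty$ in the integration identity (i), which is resolved by the embedding $L^\infty(I,X)\hookrightarrow L^1(I,X)$ coming from the finiteness of $|I|$, so that the approximation argument from the $p=1$ case transfers verbatim.
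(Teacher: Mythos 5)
Your proof is correct. The paper itself gives essentially no proof: for well-definedness, linearity, boundedness and part (i) it simply cites Yosida \cite[Chapter V, 5. Bochner's Integral, Corollary 2]{Yos80}, and declares (ii) and (iii) ``elementary and thus omitted.'' What you have written is, in effect, a self-contained reconstruction of the Yosida result together with the omitted details, so the content is the same but the route is spelled out rather than delegated. Two small remarks on your argument. First, invoking Lemma~\ref{inda} to get $\mathbfcal{A}:\mathbfcal{M}(I,X)\to\mathbfcal{M}(I,Y)$ is fine (a constant family trivially satisfies (\hyperlink{C.1}{C.1}) and (\hyperlink{C.2}{C.2}), and Lemma~\ref{inda} appears before Proposition~\ref{2.4}), though it is a little heavier than needed: for a \emph{linear continuous} $A$ one can argue directly that $A\circ\boldsymbol{x}$ is a.e.\ limit of simple functions whenever $\boldsymbol{x}$ is, without passing through Pettis' theorem and Proposition~\ref{2.2}. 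Second, your handling of the case $p=\infty$ in (i) is the right observation — simple functions are not $L^\infty$-dense, so the approximation must be run in $L^1(I,X)$ via $L^\infty(I,X)\hookrightarrow L^1(I,X)$ on a finite interval, and all three maps involved ($\mathbfcal{A}$, the Bochner integral on $L^1(I,X)$, the Bochner integral on $L^1(I,Y)$) are continuous there, so the limit passes. Parts (ii) and (iii) are exactly the elementary arguments the paper has in mind; your use of the open mapping theorem in (iii) to get $A^{-1}$ bounded, followed by applying the already-proved part to $A^{-1}$, is the standard and cleanest route.
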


\begin{proof}
  Concerning the well-definedness, linearity and boundedness including
  point (i) we refer to \cite[Chapter V, 5. Bochner's Integral,
  Corollary 2]{Yos80}. The verification of assertions (ii) and (iii) is
  elementary and thus omitted.\hfill$\square$
\end{proof}

The next remark examines how the concept of pull-back intersections transfer to the Bochner-Lebesgue level.

\begin{rmk}[Induced compatible couple]\label{2.5}
  {\rm
    Let $(X,Y)=(X,Y,Z,e_X,e_Y)$ be a compatible couple (cf.~Definition
  \ref{6.2}) and $1\leq p,q\leq\infty$. In \cite[Chapter 3, %1. Interpolation Spaces,
  Theorem 1.3]{BS88} it is proved that the sum
  $R(e_X)+R(e_Y)\subseteq Z$ equipped with the norm
  \begin{align*}
    \|z\|_{R(e_X)+R(e_Y)}:=\inf_{\substack{x\in X,y\in Y\\z=e_Xx+e_Yy}}{\max\{\|x\|_X,\|y\|_Y\}}
  \end{align*}
  is a Banach space.  Then both $e_X:X\rightarrow R(e_X)+R(e_Y)$ and
  $e_Y:Y\rightarrow R(e_X)+R(e_Y)$ are embeddings (cf.~Definition
  \ref{6.1}) and therefore due to Proposition \ref{2.4} also the
  induced operators
  \begin{align*}
    &\boldsymbol{e}_X:L^p(I,X)\rightarrow L^1(I,R(e_X)+R(e_Y))
      \text{ given via }(\boldsymbol{e}_X\boldsymbol{x})(t)
      :=e_X(\boldsymbol{x}(t))\text{ for a.e. }t\in I,
    \\
    &\boldsymbol{e}_Y:L^q(I,Y)\rightarrow L^1(I,R(e_X)+R(e_Y))
      \text{ given via }(\boldsymbol{e}_Y\boldsymbol{y})(t)
      :=e_Y(\boldsymbol{y}(t))\text{ for a.e. }t\in I.
  \end{align*}
  Consequently, 
  \begin{align*}
    (L^p(I,X),L^q(I,Y))&=(L^p(I,X),L^q(I,Y),L^1(I,R(e_X)+R(e_Y)),
    \boldsymbol{e}_X,\boldsymbol{e}_Y)
%                         ,\\ 
%    (L^p(I,X),C^0(\overline{I},Y))&=(L^p(I,X),C^0(\overline{I},Y),L^1(I,R(e_X)+R(e_Y)),\boldsymbol{e}_X,\boldsymbol{e}_Y\text{id}_{C^0(\overline{I},Y)})
  \end{align*}
  is a compatible couple. In accordance with Definition \ref{6.3},
  the pull-back intersection 
  \begin{align*}
    L^p(I,X)\cap_{\boldsymbol{j}}L^q(I,Y) %\text{ and }L^p(I,X)\cap_{\boldsymbol{j}}C^0(\overline{I},Y)
  \end{align*}
  where $
  \boldsymbol{j}:=\boldsymbol{e}_Y^{-1}\boldsymbol{e}_X$, and the
  corresponding intersection embedding 
  \begin{align*}
    \boldsymbol{j}:L^p(I,X)\cap_{\boldsymbol{j}}L^q(I,Y)\rightarrow
    L^q(I,Y)
    %\text{ and }\boldsymbol{j}:L^p(I,X)\cap_{\boldsymbol{j}}C^0(\overline{I},Y)\rightarrow C^0(\overline{I},Y)
  \end{align*}
  is well-defined.}
\end{rmk}

Next we give an alternative representation of pull-back intersections
of Bochner-Lebesgue spaces, from which we are able to deduce Bochner
measurability with respect to $X\cap_j Y$ directly.

\begin{prop}%[Alternative representation of linear-induced pull-back intersections]
  \label{2.6}
  Let $(X,Y)$ be a compatible couple and $1\leq p\leq q\leq\infty$. Then
  \begin{align*}
    (L^p(I,X\cap_j Y),L^q(I,Y))=(L^p(I,X\cap_j Y),L^q(I,Y),L^p(I,Y),\boldsymbol{j},\text{id}_{L^q(I,Y)})
  \end{align*}
  is a compatible couple, where $\boldsymbol{j}$ is defined in 
  Remark \ref{2.5}. Thus,
  $L^p(I,X\cap_j Y)\cap_{\boldsymbol{j}}L^q(I,Y)$ and
  $\boldsymbol{j}:L^p(I,X\cap_j
  Y)\cap_{\boldsymbol{j}}L^q(I,Y)\rightarrow L^q(I,Y)$ are
  well-defined. In particular, it holds
  \begin{align*}
    L^p(I,X\cap_j Y)\cap_{\boldsymbol{j}}L^q(I,Y)= L^p(I,X)\cap_{\boldsymbol{j}}L^q(I,Y)
  \end{align*}
  with norm equivalence. %Moreover, the assertion stays true if we replace $L^q(I,Y)$ by $C^0(\overline{I},Y)$.
\end{prop}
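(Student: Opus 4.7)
The plan is to verify the three claims of Proposition~\ref{2.6} in order: first, that we obtain a compatible couple; second, that the pull-back intersection and its intersection embedding are then automatically well-defined via Definition~\ref{6.3}; and third, that this intersection coincides with $L^p(I,X)\cap_{\boldsymbol{j}}L^q(I,Y)$ with equivalent norms. The first two steps are essentially set-up; the entire substance lies in the third and, within it, in a single measurability upgrade.

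For the compatible couple, I would argue that $j\colon X\cap_jY\to Y$ is an embedding by Proposition~\ref{6.4}, so by Proposition~\ref{2.4} its induced operator $\boldsymbol{j}\colon L^p(I,X\cap_jY)\to L^p(I,Y)$ is linear, continuous, and injective, i.e.\ an embedding. The natural inclusion $\text{id}\colon L^q(I,Y)\hookrightarrow L^p(I,Y)$ is an embedding since $p\leq q$ and $|I|<\infty$ (H\"older's inequality). Because $L^p(I,Y)$ is a Banach space and thus Hausdorff, the couple structure holds, and Definition~\ref{6.3} directly delivers the well-definedness of both $L^p(I,X\cap_jY)\cap_{\boldsymbol{j}}L^q(I,Y)$ and its intersection embedding.

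For the set equality, the inclusion ``$\subseteq$'' is immediate: the canonical embedding $X\cap_jY\hookrightarrow X$ combined with Proposition~\ref{2.4} turns any $\boldsymbol{x}\in L^p(I,X\cap_jY)$ into an element of $L^p(I,X)$, and the side-condition $\boldsymbol{j}\boldsymbol{x}\in L^q(I,Y)$ defining the left-hand intersection matches verbatim the one defining the right-hand intersection. The converse inclusion is the heart of the matter. Given $\boldsymbol{x}\in L^p(I,X)\cap_{\boldsymbol{j}}L^q(I,Y)$, we already know that $\boldsymbol{x}$ is Bochner measurable into $X$, that $t\mapsto j(\boldsymbol{x}(t))$ coincides a.e.\ with an element of $L^q(I,Y)$, and in particular that $\boldsymbol{x}(t)\in X\cap_jY$ for almost every $t\in I$. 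What remains is to show that $\boldsymbol{x}$ is Bochner measurable with values in the strictly smaller space $X\cap_jY$.

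My key tool for this will be the isometric isomorphism $L\colon X\cap_jY\to G(j)\subseteq X\times Y$, $x\mapsto(x,jx)^\top$, from the proof of Proposition~\ref{6.6}(i). Since isometric isomorphisms preserve Bochner measurability and a function into a product of Banach spaces is Bochner measurable if and only if each of its components is, measurability of $\boldsymbol{x}$ in $X$ together with measurability of $\boldsymbol{j}\boldsymbol{x}$ in $Y$ forces measurability of $L\circ\boldsymbol{x}$ in $X\times Y$, and hence measurability of $\boldsymbol{x}$ in $X\cap_jY$. The required $L^p$-integrability then follows from
\begin{equation*}
\|\boldsymbol{x}\|_{L^p(I,X\cap_jY)}\leq\|\boldsymbol{x}\|_{L^p(I,X)}+\|\boldsymbol{j}\boldsymbol{x}\|_{L^p(I,Y)}\leq\|\boldsymbol{x}\|_{L^p(I,X)}+c(T,p,q)\,\|\boldsymbol{j}\boldsymbol{x}\|_{L^q(I,Y)},
\end{equation*}
the last estimate using $p\leq q$ and $|I|<\infty$. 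The reverse bound $\|\boldsymbol{x}\|_{L^p(I,X)}\leq\|\boldsymbol{x}\|_{L^p(I,X\cap_jY)}$ is contained in the very definition of $\|\cdot\|_{X\cap_jY}$, and combining both bounds yields the asserted norm equivalence. The main obstacle, as flagged above, is exactly the measurability upgrade from $X$ to $X\cap_jY$; the graph--isometry device via Proposition~\ref{6.6}(i) makes this step clean without a separate appeal to Pettis' theorem or any additional separability assumption.
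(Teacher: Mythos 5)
Your approach is genuinely different from the paper's, and it is more transparent about where the real work lies.  The paper's proof stays entirely at the level of pull-back abstraction: after noting that $\boldsymbol{j}\colon L^p(I,X\cap_j Y)\to L^p(I,Y)$ is an embedding, it invokes Proposition~\ref{2.4} to get $\boldsymbol{j}^{-1}=\boldsymbol{e}_X^{-1}\boldsymbol{e}_Y$ on $R(\boldsymbol{j})$ and then rewrites $\boldsymbol{j}^{-1}(R(\boldsymbol{j})\cap L^q(I,Y))$ through a short chain of set-theoretic identities in the embeddings $\boldsymbol{e}_X,\boldsymbol{e}_Y$, arriving at $\boldsymbol{e}_X^{-1}(R(\boldsymbol{e}_X)\cap R(\boldsymbol{e}_Y))$.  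The analytic content --- that a function which is $X$-Bochner-measurable and whose image under $j$ is $Y$-Bochner-measurable is in fact $X\cap_j Y$-Bochner-measurable --- is absorbed into the identification $R(\boldsymbol{j})=R(\boldsymbol{e}_Y^{-1}\boldsymbol{e}_X)$ rather than argued separately.  You instead surface this measurability upgrade as the crux and handle it via the graph isometry $L\colon X\cap_j Y\to G(j)\subseteq X\times Y$ from Proposition~\ref{6.6}(i), which is a clean and legitimate route.  Your treatment of the compatible-couple structure, the inclusion ``$\subseteq$'', and the norm estimates (Minkowski plus H\"older with $p\leq q$ and $|I|<\infty$) are all correct.

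The one place you overclaim is the assertion that the graph device lets you avoid ``a separate appeal to Pettis' theorem.''  It does not.  Component-wise measurability gives you that $L\circ\boldsymbol{x}=(\boldsymbol{x},\boldsymbol{j}\boldsymbol{x})^\top$ is Bochner measurable \emph{into the ambient space} $X\times Y$.  But $L^{-1}$ is defined only on $G(j)$, so before you may apply it you must upgrade this to Bochner measurability \emph{into the closed subspace} $G(j)$.  That passage --- ``a Bochner measurable map into $Z$ with a.e.~values in a closed subspace $W\subseteq Z$ is Bochner measurable into $W$'' --- is precisely a Pettis-type argument: weak $W$-measurability follows by Hahn--Banach extension of $W^*$-functionals to $Z^*$, and a.e.~separable $W$-valuedness follows because the a.e.~separable $Z$-range intersected with $W$ is again separable (here $G(j)\cap(S_X\times S_Y)$ with $S_X,S_Y$ the separable a.e.~ranges of $\boldsymbol{x}$ and $\boldsymbol{j}\boldsymbol{x}$).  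So state this closed-subspace lemma explicitly and cite it; it is standard, but it is not free, and without it the step ``and hence measurability of $\boldsymbol{x}$ in $X\cap_j Y$'' is a genuine jump.  With that one insertion your proof is complete and, in my view, explains the proposition better than the purely formal chain in the paper.
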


\begin{proof}
  As $j:X\cap_j Y\rightarrow Y$ is an embedding, the induced operator
  $\boldsymbol{j}:L^p(I,X\cap_j Y)\rightarrow L^p(I,Y)$ is an
  embedding as well, due to Proposition \ref{2.4}. Therefore, 
  \begin{align*}
    (L^p(I,X\cap_j Y),L^q(I,Y))=(L^p(I,X\cap_j Y),L^q(I,Y),L^p(I,Y),\boldsymbol{j},\text{id}_{L^q({I},Y)})
  \end{align*}
  is a compatible couple, and 
  $L^p(I,X\cap_j Y)\cap_{\boldsymbol{j}}L^q(I,Y)$ and
  $\boldsymbol{j}:L^p(I,X\cap_j
  Y)\cap_{\boldsymbol{j}}L^q(I,Y)\rightarrow L^q(I,Y)$ are
  well-defined. Proposition \ref{2.4} also implies
  $(\boldsymbol{j}^{-1}\boldsymbol{y})(t)=j^{-1}(\boldsymbol{y}(t))
  =e_X^{-1}e_Y(\boldsymbol{y}(t))=(\boldsymbol{e}_X^{-1}\boldsymbol{e}_Y
  \boldsymbol{y})(t)$  for almost every $t\in I$ and all
  $\boldsymbol{y}\in  R(\boldsymbol{j})=R(\boldsymbol{e}_Y^{-1}\boldsymbol{e}_X)$,
  i.e., $\boldsymbol{j}^{-1}=\boldsymbol{e}_X^{-1}\boldsymbol{e}_Y$ on
  $R(\boldsymbol{j})$. From the latter and Definition \ref{6.3} we
  obtain
  \begin{align*}
    L^p(I,X\cap_j Y)\cap_{\boldsymbol{j}}L^q(I,Y)
    &=\boldsymbol{j}^{-1}(R(\boldsymbol{j})\cap
      L^q(I,Y))\\&=\boldsymbol{e}_X^{-1}\boldsymbol{e}_Y(R(\boldsymbol{j})\cap
    L^q(I,Y))\\&=\boldsymbol{e}_X^{-1}(R(\boldsymbol{e}_Y\boldsymbol{j})\cap
    R(\boldsymbol{e}_Y\text{id}_{L^q(I,Y)})) 
    \\
    &=\boldsymbol{e}_X^{-1}(R(\boldsymbol{e}_X)\cap R(\boldsymbol{e}_Y\text{id}_{L^q(I,Y)}))
    \\
    &=L^p(I,X)\cap_{\boldsymbol{j}}L^q(I,Y).
  \end{align*}
  The verification of the stated norm equivalence is an elementary
  calculation and thus omitted.\hfill$\square$
\end{proof}

\subsection{Evolution equations}
For a pre-evolution triple $(V,H,j)$, $I:=\left(0,T\right)$, with $0<T<\infty$, and $1<p< \infty$ we set
\[
  \mathbfcal{X}:=L^p(I,V\cap _jH), \qquad 
 \mathbfcal{Y}:=L^\infty(I,H).
\]

\begin{defn}\label{2.10}
  Let $(V,H,j)$ be a pre-evolution triple and $1< p<\infty$. A
  function $\boldsymbol{x}\in\mathbfcal{X}$ possesses a
  \textbf{generalized time derivative with respect to the canonical
    embedding $e$ of $(V,H,j)$} if there exists a function
  $\boldsymbol{w}\in \mathbfcal{X}^*$ such that
  \begin{align*}
	-\int_I{(j(\boldsymbol{x}(s)),jv)_H\varphi\prime(s)\,ds}=
    \int_I{\langle\boldsymbol{w}(s),v\rangle_{V\cap_jH}\varphi(s)\,ds} 
  \end{align*}
  for all $v\in V\cap_jH$ and $\varphi\in C_0^\infty(I)$. As this function $\boldsymbol{w}\in \mathbfcal{X}^*$ is unique (cf. \cite[Proposition 23.18]{Zei90A}),
  $\frac{d_e\boldsymbol{x}}{dt}:=\boldsymbol{w}$ is well-defined. By 
  \begin{align*}
  	\mathbfcal{W}:=W^{1,p,p'}_e(I,V\cap_jH,(V\cap_jH)^*):=\Big\{\boldsymbol{x}\in
    L^p(I,V\cap_j H)\,\Big|\, \exists \, \frac{d_e\boldsymbol{x}}{dt}\in
    L^{p'}(I,(V\cap_j H)^*)\Big\} 
  \end{align*}
  we denote the \textbf{Bochner-Sobolev space with respect to $e$}.
\end{defn}

\begin{prop}\label{2.11}
  Let $(V,H,j)$ be a pre-evolution triple and $1<p<\infty$. Then it holds:
	\begin{description}[(iii)]
		\item[(i)] The space $\mathbfcal{W}$ forms a Banach space equipped with the norm 
		\begin{align*}
		\|\cdot\|_{\mathbfcal{W}}:=\|\cdot\|_{\mathbfcal{X}}+\left\|\frac{d_e}{dt}\,\cdot\,\right\|_{\mathbfcal{X}^*}.
		\end{align*}
        \item[(ii)] Given $\boldsymbol{x}\in \mathbfcal{W}$ the
          function $\boldsymbol{j}\boldsymbol{x}\in L^p(I,H)$, given
          via
          $(\boldsymbol{j}\boldsymbol{x})(t):=j(\boldsymbol{x}(t))$
          for almost every $t\in I$, possesses a unique representation
          in $C^0(\overline{I},H)$ and the resulting mapping
          $\boldsymbol{j}:\mathbfcal{W}\rightarrow
          C^0(\overline{I},H)$ is an embedding.
        \item[(iii)] \textbf{Generalized integration by parts formula:} It holds
          \begin{align*}
            \int_{t'}^{t}{\left\langle
            \frac{d_e\boldsymbol{x}}{dt}(s),\boldsymbol{y}(s)\right\rangle_{V\cap_jH}\,ds}
            =\big[((\boldsymbol{j}\boldsymbol{x})(s), (\boldsymbol{j}
            \boldsymbol{y})(s))_H\big]^{s=t}_{s=t'}-\int_{t'}^{t}{\left\langle
            \frac{d_e\boldsymbol{y}}{dt}(s),\boldsymbol{x}(s)\right\rangle_{V\cap_jH}\,ds} 
          \end{align*}
          for all $\boldsymbol{x},\boldsymbol{y}\in \mathbfcal{W}$ and
          $t,t'\in \overline{I}$ with $t'\leq t$.
	\end{description}  
\end{prop}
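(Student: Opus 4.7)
The plan is to reduce the entire proposition to the classical Bochner--Sobolev theory for evolution triples by exploiting the observation (made in the remark after Definition~\ref{def:pre-evol}) that $(V\cap_j H, H, j)$ is itself an evolution triple. With that reduction, $\mathbfcal{W}$ is literally the standard space $W^{1,p,p'}(I, V\cap_j H, (V\cap_j H)^*)$ associated with the canonical embedding $e=j^*Rj$, so that the three assertions follow from well-known results (cf.~\cite{Zei90A,GGZ74,Sho97,Ru04}). The main analytical content is concentrated in (ii).

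For (i) I would argue as in the classical case. By Proposition~\ref{6.6}(ii), $V\cap_j H$ is separable and reflexive, so $\mathbfcal{X}=L^p(I,V\cap_j H)$ and its dual $\mathbfcal{X}^*\cong L^{p'}(I,(V\cap_j H)^*)$ are Banach spaces. Given a Cauchy sequence $(\boldsymbol{x}_n)_{n\in\mathbb{N}}\subseteq \mathbfcal{W}$, take the limits $\boldsymbol{x}_n\to \boldsymbol{x}$ in $\mathbfcal{X}$ and $\tfrac{d_e\boldsymbol{x}_n}{dt}\to \boldsymbol{w}$ in $\mathbfcal{X}^*$. Passing to the limit, using the continuity of $j$ and of the duality pairing, in the defining identity
\begin{equation*}
-\int_I (j(\boldsymbol{x}_n(s)),jv)_H\,\varphi'(s)\,ds=\int_I\bigl\langle\tfrac{d_e\boldsymbol{x}_n}{dt}(s),v\bigr\rangle_{V\cap_j H}\varphi(s)\,ds
\end{equation*}
for arbitrary $v\in V\cap_j H$ and $\varphi\in C_0^\infty(I)$, identifies $\boldsymbol{w}=\tfrac{d_e\boldsymbol{x}}{dt}$ and shows $\boldsymbol{x}_n\to \boldsymbol{x}$ in $\mathbfcal{W}$.

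For (ii), the statement is precisely the classical continuous-representative theorem applied to the evolution triple $(V\cap_j H, H, j)$: for any $\boldsymbol{x}$ whose generalized derivative with respect to $e$ lies in $\mathbfcal{X}^*$, the Bochner map $\boldsymbol{j}\boldsymbol{x}\in L^p(I,H)$ has a unique representative in $C^0(\overline I, H)$, and the assignment $\boldsymbol{x}\mapsto \boldsymbol{j}\boldsymbol{x}$ is linear and continuous into $C^0(\overline I,H)$. The standard proof (which I would invoke rather than repeat) proceeds by mollification in time of an extension of $\boldsymbol{x}$ to $\mathbb{R}$, showing that the sequence of mollified functions is Cauchy in $C^0(\overline I,H)$ via the smooth identity $\tfrac{d}{dt}\|j\boldsymbol{x}\|_H^2=2\langle \tfrac{d_e\boldsymbol{x}}{dt},\boldsymbol{x}\rangle_{V\cap_j H}$, which in turn rests on the identification~\eqref{eq:iden}. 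Uniqueness of the representative and the continuity estimate $\|\boldsymbol{j}\boldsymbol{x}\|_{C^0(\overline I, H)}\le c\,\|\boldsymbol{x}\|_{\mathbfcal{W}}$ follow from the same argument.

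Finally, (iii) follows from (ii) by the standard density/smoothing argument. For $\boldsymbol{x},\boldsymbol{y}\in C^1(\overline I, V\cap_j H)$ the formula is an immediate consequence of the fundamental theorem of calculus combined with \eqref{eq:iden}; passing to the limit using a simultaneous mollification, the convergence of the left and right-hand integral terms follows from $\boldsymbol{x}_n\to\boldsymbol{x}$, $\boldsymbol{y}_n\to\boldsymbol{y}$ in $\mathbfcal{W}$, while the convergence of the boundary term at $t$ and $t'$ is exactly the content of (ii). The principal obstacle is (ii), and indeed the mollification-plus-chain-rule argument for the continuous $H$-valued representative is the one non-trivial ingredient; since the definition of $\mathbfcal{W}$ has been engineered so that $(V\cap_j H, H, j)$ is a genuine evolution triple, no new work beyond citing the classical proof is required.
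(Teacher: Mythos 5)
Your reduction to the classical Bochner--Sobolev theory via the observation that $(V\cap_j H,H,j)$ is an evolution triple is exactly what the paper does: its proof is a one-line citation of \cite[Proposition 23.23]{Zei90A} justified by that same observation. Your sketch of the underlying classical arguments (Cauchy sequences for (i), mollification and the chain-rule identity for (ii), density for (iii)) is a correct account of what the cited result rests on, so the proposal matches the paper's approach.
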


\begin{proof}
  A straightforward adaption of \cite[Proposition 23.23]{Zei90A},
  since $(V\cap _j H, H,j)$ is an evolution triple.\hfill $\square$
\end{proof}

\begin{defn}[Evolution equation]\label{2.12}
  Let $(V,H,j)$ be a pre-evolution triple and $1<p< \infty$. Moreover, let
  $\boldsymbol{y}_0\in H$ be an initial value,
  $\boldsymbol{f}\in\mathbfcal{X}^*$ a right-hand side and
  $\mathbfcal{A}:\mathbfcal{X}\cap_{\boldsymbol{j}}\mathbfcal{Y}\rightarrow\mathbfcal{X}^*$
  an operator. Then the initial value problem
  \begin{align}
    \begin{aligned}
      \frac{d_e\boldsymbol{y}}{dt}+\mathbfcal{A}\boldsymbol{y}&=\boldsymbol{f}\quad&&\text{ in }\mathbfcal{X}^*,\\
      (\boldsymbol{j}\boldsymbol{y})(0)&=\boldsymbol{y}_0&&\text{ in }H
    \end{aligned}\label{eq:1}
  \end{align}
  is said to be an \textbf{evolution equation}. The initial condition
  has to be understood in the sense of the unique continuous
  representation $\boldsymbol{j}\boldsymbol{y}\in C^0(\overline{I},H)$
  (cf. Proposition \ref{2.11} (ii)).
\end{defn}

\section{Bochner pseudo-monotonicity and Bochner coercivity}
\label{sec:3}
 In this section we introduce the notions Bochner pseudo-monotonicity and Bochner coercivity.

\begin{defn}[Bochner pseudo-monotonicity]\label{3.2}
  Let $(V,H,j)$ be a pre-evolution triple and $1<p<\infty$. An
  operator
  $\mathbfcal{A}:D(\bfA)\subseteq \mathbfcal{X}\rightarrow\mathbfcal{X}^*$ with 
  $D(\bfA) \supseteq \mathbfcal{X}\cap_{\boldsymbol{j}}\mathbfcal{Y}$
  is said to be \textbf{Bochner pseudo-monotone} if for a sequence
  $(\boldsymbol{x}_n)_{n\in\mathbb{N}}\subseteq
  \mathbfcal{X}\cap_{\boldsymbol{j}}\mathbfcal{Y}$ from
  \begin{alignat}{2}
    \boldsymbol{x}_n\overset{n\rightarrow\infty}{\rightharpoonup}
    &\boldsymbol{x}\quad &&\text{ in }\mathbfcal{X}\label{eq:3.3},
    \\
    \boldsymbol{j}\boldsymbol{x}_n\;\;\overset{\ast}{\rightharpoondown}\;\;
    &\boldsymbol{j}\boldsymbol{x}&&\text{ in }\mathbfcal{Y} \quad
    (n\rightarrow\infty),
    \label{eq:3.3a}
    \\
    (\boldsymbol{j}\boldsymbol{x}_n)(t)\overset{n\rightarrow\infty}{\rightharpoonup}
    &(\boldsymbol{j}\boldsymbol{x})(t)\quad &&\text{ in }H\text{ for
      a.e. }t\in I,\label{eq:3.4}
  \end{alignat}
  and
  \begin{align}
    \limsup_{n\rightarrow\infty}{\langle \mathbfcal{A} \boldsymbol{x}_n,
    \boldsymbol{x}_n-\boldsymbol{x}\rangle_{\mathbfcal{X}}}
    \leq 0\label{eq:3.5}
  \end{align}
  it follows that
  $ \langle
  \mathbfcal{A}\boldsymbol{x},\boldsymbol{x}-\boldsymbol{y}\rangle_{\mathbfcal{X}}\leq
  \liminf_{n\rightarrow\infty}{\langle
    \mathbfcal{A}\boldsymbol{x}_n,\boldsymbol{x}_n-\boldsymbol{y}\rangle_{\mathbfcal{X}}}$
  for all $\boldsymbol{y}\in \mathbfcal{X}$.
\end{defn}

We will see in the proof of Theorem \ref{4.1} that
\eqref{eq:3.3}--\eqref{eq:3.5} are natural properties of a sequence
$(\boldsymbol {x}_n)_{n \in \mathbb N}\subseteq
\mathbfcal{X}\cap_{\boldsymbol{j}}\mathbfcal{Y}$ coming from an
appropriate Galerkin approximation of \eqref{eq:1}, if $\bfA$
satisfies appropriate additional assumptions. In fact,
\eqref{eq:3.3} usually is a consequence of the coercivity of
$\mathbfcal{A}$, \eqref{eq:3.3a} stems from the time derivative, while
\eqref{eq:3.4} and \eqref{eq:3.5} follow directly from the Galerkin
approximation.

\begin{rmk}\label{comp2}
  One easily sees, that each pseudo-monotone operator
  $\mathbfcal{A}: \mathbfcal{X} \rightarrow\mathbfcal{X}^*$ is Bochner
  pseudo-monotone  with $D(\bfA)= \mathbfcal{X}$.
  % \cap_{\boldsymbol{j}} \mathbfcal{Y}  \rightarrow\mathbfcal{X}^*$.
  Note that converse is not true in
  general. In fact, there exist Bochner pseudo-monotone operators
  which are not pseudo-monotone. This can be seen by the following
  example.
	
  Let $I=\left(0,T\right)$, with $0<T<\infty$, 
  $V=W^{1,p}_{0,\text{div}}(\Omega)$, $p>3$, $H=L^2_{\divo}(\Omega)$,
  $\mathbfcal{X}=L^p(I,V)$, $\mathbfcal{Y}=L^\infty(I,H)$ and let 
  $B:V\to V^*$ be the convective term, defined through
  $\langle B\bv,\bw\rangle_V=\int_{\Omega}{\bv\otimes
    \bv:\bD\bw\,dx}$ for all $\bv,\bw\in V$.  Then, $B:V\to V^*$ is
  compact, and thus pseudo-monotone. The unsteady convective term
  $\mathbfcal{B}:\mathbfcal{X}\to \mathbfcal{X}^*$, given via
  $(\mathbfcal{B}\boldsymbol{x})(t):=B(\boldsymbol{x}(t))$ in $V^*$
  for almost every $t\in I$ and all $\boldsymbol{x}\in \mathbfcal{X}$,
  is well-defined but neither compact nor pseudo-monotone. In fact, let
  $\bv,\bw\in V$ be fixed with $\langle B\bv,\bw\rangle_V<0$ and
  $(\boldsymbol{x}_n)_{n\in\mathbb{N}}\subseteq \mathbfcal{X} $, given
  via $\boldsymbol{x}_n(t)=\sin(nt)\bv$ for every 
  $t\in I$ and $n\in\mathbb{N}$. Then
  $\boldsymbol{x}_n\rightharpoonup \boldsymbol{0}$ in $\mathbfcal{X}$ $(n\to\infty)$
  and $\limsup_{n\to \infty} \langle \mathbfcal{B} \boldsymbol{x}_ n,
  \boldsymbol{x}_n-\boldsymbol{0}\rangle_{\mathbfcal{X}}= 0$, since
  $\langle \mathbfcal B\boldsymbol x_n, \boldsymbol x_n\rangle_{\mathbfcal
    X}=0$. For
  $\boldsymbol{y}(\cdot):=\bw\in \mathbfcal{X}$ it holds
  $ \liminf_{n\rightarrow\infty}{\langle
    \mathbfcal{B}\boldsymbol{x}_n,\boldsymbol{x}_n-\boldsymbol{y}\rangle_{\mathbfcal{X}}}=\pi
  \langle B\bv,\bw\rangle_V<0=\langle
  \mathbfcal{B}\boldsymbol{0},\boldsymbol{0}-\boldsymbol{y}\rangle_{\mathbfcal{X}}$,
  i.e., $\mathbfcal B$ is not pseudo-monotone, and thus also not compact. We
  emphasize that even
  $\boldsymbol{x}_n\overset{\ast}{\rightharpoondown}\boldsymbol{0}$ in
  $L^\infty(I,V)$ $(n\to\infty)$, but $\boldsymbol{x}_n(t)\rightharpoonup \boldsymbol{0}$ in $H$ for
  almost every $t\in I$ $(n\to\infty)$ is not satisfied, which is the
  additional requirement in the Bochner pseudo-monotonicity. 
	
  On the other hand, $\mathbfcal{B}:\mathbfcal{X}\to \mathbfcal{X}^*$ is Bochner
  pseudo-monotone. If $(\boldsymbol{x}_n)_{n\in\mathbb{N}}\subseteq
  \mathbfcal{X}\cap_{\boldsymbol{j}} 
  \mathbfcal{Y}$ satisfies \eqref{eq:3.3}--\eqref{eq:3.5}, then we
  infer from Landes' and Mustonen's compactness principle
  (cf. \cite[Propositon 1]{LM87}) that
  $\boldsymbol{x}_n\to \boldsymbol{x}$ almost everywhere in
  $I\times \Omega$ $(n\to\infty)$. As
  $\mathbfcal{X}\cap _{\boldsymbol{j}}\mathbfcal{Y}\hookrightarrow L^\rho(I\times
  \Omega)$, where $\rho=\frac{5}{3}\,p>2p'$, we thus gain
  $\boldsymbol{x}_n\otimes\boldsymbol{x}_n\to
  \boldsymbol{x}\otimes\boldsymbol{x}$ in $L^{p'}(I\times \Omega)$
  $(n\to \infty)$, which in turn implies
  $\mathbfcal{B}\boldsymbol{x}_n\to \mathbfcal{B}\boldsymbol{x}$ in
  $\mathbfcal{X}^*$ and therefore
  $\langle
  \mathbfcal{B}\boldsymbol{x},\boldsymbol{x}-\boldsymbol{y}\rangle_{\mathbfcal{X}}\leq
  \liminf_{n\rightarrow\infty}{\langle
    \mathbfcal{B}\boldsymbol{x}_n,\boldsymbol{x}_n-\boldsymbol{y}\rangle_{\mathbfcal{X}}}$
  for any $\boldsymbol{y}\in \mathbfcal{X}$.

  The convective term is for $p> \frac{11}{5}$ well-defined as an
  operator
  $\mathbfcal{B}:\mathbfcal{X}\cap _{\boldsymbol{j}}\mathbfcal{Y}\to \mathbfcal{X}^*$,
  as can be easily checked by H\"older's inequality.  Note that the
  above argumentation works also for $p > \frac {11}5$, since
  $\rho=\frac{5}{3}\,p>2p'$. Thus,
  $\mathbfcal{B}: D(\bfA)\subseteq \mathbfcal{X}\to \mathbfcal{X}^*$
  with $D(\bfA)= \mathbfcal{X}\cap _{\boldsymbol{j}}\mathbfcal{Y}$
  is Bochner pseudo-monotone for $p > \frac {11}5$.
\end{rmk}

Pseudo-monotonicity possesses two essential properties. On
the one hand, if $A:X\to X^*$ is pseudo-monotone,
$(x_n)_{n\in\mathbb{N}}\subseteq X$ satisfies
\eqref{eq:1.1}--\eqref{eq:1.2} and
$(Ax_n)_{n\in\mathbb{N}}\subseteq X^*$ is bounded, then
$Ax_n\overset{n\to\infty}{\rightharpoonup} Ax$ in $X^*$. On the other
hand, pseudo-monotonicity is stable under summation, in the sense that
the sum of two pseudo-monotone operators is pseudo-monotone again. We
will see that Bochner pseudo-monotonicity also possesses these two
essential properties.

\begin{prop}\label{3.6}
  Let $(V,H,j)$ be a pre-evolution triple, $1<p<\infty$ and
  $\mathbfcal{A}:D(\bfA) \subseteq
  \mathbfcal{X}\rightarrow\mathbfcal{X}^*$ be 
  Bochner pseudo-monotone. Then it holds:
  \begin{description}[(ii)]
  \item[(i)] If a sequence 
    $(\boldsymbol{x}_n)_{n\in\mathbb{N}}\subseteq
    \mathbfcal{X}\cap_{\boldsymbol{j}}\mathbfcal{Y}$ and
    $\boldsymbol{x}\in \mathbfcal{X}\cap_{\boldsymbol{j}}\mathbfcal{Y}$ satisfy
    \eqref{eq:3.3}--\eqref{eq:3.5}, and if 
    $(\mathbfcal{A}\boldsymbol{x}_n)_{n\in\mathbb{N}}\subseteq
    \mathbfcal{X}^*$ is bounded, then
    $\mathbfcal{A}\boldsymbol{x}_n\overset{n\rightarrow\infty}{\rightharpoonup}\mathbfcal{A}\boldsymbol{x}$
    in $\mathbfcal{X}^*$.
  \item[(ii)] If
    $\mathbfcal{B}:D(\bfB) \subseteq \mathbfcal{X}\rightarrow\mathbfcal{X}^*$
    is Bochner pseudo-monotone, then
    $\mathbfcal{A}+\mathbfcal{B}:D(\bfA)\cap D(\bfB)\rightarrow\mathbfcal{X}^*$
    is Bochner pseudo-monotone.
  \end{description}
\end{prop}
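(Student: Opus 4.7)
The plan is the classical pseudo-monotonicity argument combined with the standard convergence principle, exploiting that $\mathbfcal{X}^{*}$ is reflexive (reflexivity of $V\cap_{j}H$ from Proposition~\ref{6.6}(ii) transfers to $\mathbfcal{X}=L^{p}(I,V\cap_{j}H)$ since $1<p<\infty$, and hence to its dual). First I pick an arbitrary weakly convergent subsequence $\mathbfcal{A}\boldsymbol{x}_{n_{k}}\rightharpoonup \boldsymbol{\chi}$ in $\mathbfcal{X}^{*}$ via Eberlein--\v{S}muljan. Applying Bochner pseudo-monotonicity of $\mathbfcal{A}$ with test element $\boldsymbol{y}=\boldsymbol{x}$ to this subsequence yields $0\leq\liminf_{k\to\infty}\langle\mathbfcal{A}\boldsymbol{x}_{n_{k}},\boldsymbol{x}_{n_{k}}-\boldsymbol{x}\rangle_{\mathbfcal{X}}$, which together with \eqref{eq:3.5} forces $\langle\mathbfcal{A}\boldsymbol{x}_{n_{k}},\boldsymbol{x}_{n_{k}}-\boldsymbol{x}\rangle_{\mathbfcal{X}}\to 0$. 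Then for an arbitrary $\boldsymbol{y}\in\mathbfcal{X}$, splitting $\boldsymbol{x}_{n_{k}}-\boldsymbol{y}=(\boldsymbol{x}_{n_{k}}-\boldsymbol{x})+(\boldsymbol{x}-\boldsymbol{y})$ and using $\mathbfcal{A}\boldsymbol{x}_{n_{k}}\rightharpoonup\boldsymbol{\chi}$ gives $\langle\mathbfcal{A}\boldsymbol{x},\boldsymbol{x}-\boldsymbol{y}\rangle_{\mathbfcal{X}}\leq\langle\boldsymbol{\chi},\boldsymbol{x}-\boldsymbol{y}\rangle_{\mathbfcal{X}}$; replacing $\boldsymbol{y}$ by $2\boldsymbol{x}-\boldsymbol{y}\in\mathbfcal{X}$ produces the reverse inequality, and thus $\mathbfcal{A}\boldsymbol{x}=\boldsymbol{\chi}$. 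Since every weakly convergent subsequence of the bounded sequence $(\mathbfcal{A}\boldsymbol{x}_{n})$ has the same limit, the standard convergence principle (\cite[Kap.~I, Lemma~5.4]{GGZ74}) yields $\mathbfcal{A}\boldsymbol{x}_{n}\rightharpoonup\mathbfcal{A}\boldsymbol{x}$ in $\mathbfcal{X}^{*}$.

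\textbf{Part (ii).} Let $(\boldsymbol{x}_{n})\subseteq\mathbfcal{X}\cap_{\boldsymbol{j}}\mathbfcal{Y}$ satisfy \eqref{eq:3.3}--\eqref{eq:3.5} with $\mathbfcal{A}+\mathbfcal{B}$ in place of $\mathbfcal{A}$. Conditions \eqref{eq:3.3}--\eqref{eq:3.4} transfer verbatim to the individual operators; the core difficulty, and in my view \emph{the main obstacle}, is that \eqref{eq:3.5} only controls the \emph{sum} $\langle(\mathbfcal{A}+\mathbfcal{B})\boldsymbol{x}_{n},\boldsymbol{x}_{n}-\boldsymbol{x}\rangle_{\mathbfcal{X}}$, not the individual terms. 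My plan is to first establish the separate bounds
$$\limsup_{n\to\infty}\langle\mathbfcal{A}\boldsymbol{x}_{n},\boldsymbol{x}_{n}-\boldsymbol{x}\rangle_{\mathbfcal{X}}\leq 0,\qquad \limsup_{n\to\infty}\langle\mathbfcal{B}\boldsymbol{x}_{n},\boldsymbol{x}_{n}-\boldsymbol{x}\rangle_{\mathbfcal{X}}\leq 0,$$
by contradiction. Suppose the first $\limsup$ equals some $L\in(0,\infty]$ along a subsequence; passing to a further relabeled subsequence on which the second $\limsup$ also converges in $[-\infty,\infty]$, the hypothesis \eqref{eq:3.5} for the sum forces this second limit to lie in $[-\infty,-L]$, so in particular $\limsup\langle\mathbfcal{B}\boldsymbol{x}_{n_{k}},\boldsymbol{x}_{n_{k}}-\boldsymbol{x}\rangle_{\mathbfcal{X}}\leq -L<0$. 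Hence \eqref{eq:3.3}--\eqref{eq:3.5} are valid for $\mathbfcal{B}$ along the subsequence, and Bochner pseudo-monotonicity of $\mathbfcal{B}$ with $\boldsymbol{y}=\boldsymbol{x}$ gives $0\leq\liminf\langle\mathbfcal{B}\boldsymbol{x}_{n_{k}},\boldsymbol{x}_{n_{k}}-\boldsymbol{x}\rangle_{\mathbfcal{X}}\leq -L<0$, a contradiction; the symmetric case is identical.

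Once both individual $\limsup$-bounds are secured, \eqref{eq:3.3}--\eqref{eq:3.5} are satisfied by $\mathbfcal{A}$ and by $\mathbfcal{B}$ separately. Applying the Bochner pseudo-monotonicity of each operator to an arbitrary $\boldsymbol{y}\in\mathbfcal{X}$ and adding the two resulting inequalities, using the elementary fact $\liminf a_{n}+\liminf b_{n}\leq\liminf(a_{n}+b_{n})$, produces
$$\langle(\mathbfcal{A}+\mathbfcal{B})\boldsymbol{x},\boldsymbol{x}-\boldsymbol{y}\rangle_{\mathbfcal{X}}\leq\liminf_{n\to\infty}\langle(\mathbfcal{A}+\mathbfcal{B})\boldsymbol{x}_{n},\boldsymbol{x}_{n}-\boldsymbol{y}\rangle_{\mathbfcal{X}},$$
which is exactly the conclusion for $\mathbfcal{A}+\mathbfcal{B}$. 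The only non-routine step is the contradiction argument decoupling the two $\limsup$'s; everything else is bookkeeping.
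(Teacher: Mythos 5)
Your proof is correct and follows essentially the same route as the paper: part (i) extracts a weakly convergent subsequence in the reflexive space $\mathbfcal{X}^*$, applies Bochner pseudo-monotonicity to identify its limit as $\mathbfcal{A}\boldsymbol{x}$, and invokes the convergence principle; part (ii) decouples the $\limsup$ of the sum into individual $\limsup$'s by the same contradiction argument (assuming one $\limsup$ is positive and deriving an absurdity from the Bochner pseudo-monotonicity of the other operator tested against $\boldsymbol{y}=\boldsymbol{x}$), then adds the two resulting $\liminf$-inequalities. The only cosmetic differences are that in (i) you first isolate the limit $\langle\mathbfcal{A}\boldsymbol{x}_{n_k},\boldsymbol{x}_{n_k}-\boldsymbol{x}\rangle_{\mathbfcal{X}}\to 0$ before splitting the test term, whereas the paper bounds $\liminf\langle\mathbfcal{A}\boldsymbol{x}_n,\boldsymbol{x}_n-\boldsymbol{y}\rangle_{\mathbfcal{X}}$ directly via a $\limsup$-splitting.
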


% \begin{cor}
%   Let $(V,H,j)$ be a pre-evolution triple, $1<p<\infty$ and
%   $\mathbfcal{A}:\mathbfcal{X}\cap_{\boldsymbol{j}}
%   \mathbfcal{Y}\rightarrow\mathbfcal{X}^*$ Bochner
%   pseudo-monotone. Then it holds:
%   \begin{description}[(ii)]
%   \item[(i)] If
%     $\mathbfcal{A}:\mathbfcal{X}\cap_{\boldsymbol{j}}\mathbfcal{Y}\rightarrow\mathbfcal{X}^*$
%     is additionally bounded and
%     $(\boldsymbol{x}_n)_{n\in\mathbb{N}}\subseteq
%     \mathbfcal{X}\cap_{\boldsymbol{j}}\mathbfcal{Y}$ satisfies
%     \eqref{eq:3.3}--\eqref{eq:3.5}, then
%     $\mathbfcal{A}\boldsymbol{x}_n\overset{n\rightarrow\infty}{\rightharpoonup}\mathbfcal{A}\boldsymbol{x}$
%     in $\mathbfcal{X}^*$.
		
%   \item[(ii)] If
%     $\mathbfcal{A}:\mathbfcal{X}\cap_{\boldsymbol{j}}\mathbfcal{Y}\rightarrow\mathbfcal{X}^*$
%     is additionally locally bounded, then
%     $\mathbfcal{A}:\mathbfcal{X}\cap_{\boldsymbol{j}}\mathbfcal{Y}\rightarrow\mathbfcal{X}^*$
%     is demi-continuous.
%   \end{description}
% \end{cor}

\begin{proof}%(of Proposition \ref{3.6})
  \textbf{ad (i)} Since
  $\mathbfcal{X}^*$ is reflexive, we obtain a subsequence
  $(\mathbfcal{A}\boldsymbol{x}_n)_{n\in\Lambda}\subseteq
  \mathbfcal{X}^*$ with $\Lambda\subseteq\mathbb{N}$ and
  $\boldsymbol{\xi}\in \mathbfcal{X}^*$ such that
  $\mathbfcal{A}\boldsymbol{x}_n\overset{n\rightarrow\infty}{\rightharpoonup}\boldsymbol{\xi}$
  in $\mathbfcal{X}^*$ $(n\in\Lambda)$. This, the Bochner
  pseudo-monotonicity of
  $\mathbfcal{A}:D(\bfA)\subseteq \mathbfcal{X}\rightarrow\mathbfcal{X}^*$
  and \eqref{eq:3.5} imply
  \begin{align*}
    \langle\mathbfcal{A}\boldsymbol{x},\boldsymbol{x}-
    \boldsymbol{y}\rangle_{\mathbfcal{X}}
    &\leq \liminf_{\substack{n\rightarrow\infty\\n\in\Lambda}}
    {\langle\mathbfcal{A}\boldsymbol{x}_n,\boldsymbol{x}_n
    -\boldsymbol{y}\rangle_{\mathbfcal{X}}}
    \\
    &\leq \limsup_{\substack{n\rightarrow\infty\\n\in\Lambda}}
    {\langle\mathbfcal{A}\boldsymbol{x}_n,\boldsymbol{x}_n
    -\boldsymbol{x}\rangle_{\mathbfcal{X}}}+
    \limsup_{\substack{n\rightarrow\infty\\n\in\Lambda}}
    {\langle\mathbfcal{A}\boldsymbol{x}_n,\boldsymbol{x}
    -\boldsymbol{y}\rangle_{\mathbfcal{X}}}\leq
    \langle\boldsymbol{\xi},\boldsymbol{x}-\boldsymbol{y}\rangle_{\mathbfcal{X}} 
  \end{align*}
  for all $\boldsymbol{y}\in \mathbfcal{X}$ and therefore
  $\mathbfcal{A}\boldsymbol{x}=\boldsymbol{\xi}$ in
  $\mathbfcal{X}^*$. As this argumentation stays valid for each
  subsequence of
  $(\mathbfcal{A}\boldsymbol{x}_n)_{n\in\mathbb{N}}\subseteq
  \mathbfcal{X}^*$, $\mathbfcal{A}\boldsymbol{x}\in \mathbfcal{X}^*$
  is a weak accumulation point of each subsequence of
  $(\mathbfcal{A}\boldsymbol{x}_n)_{n\in\mathbb{N}} $. %\subseteq \mathbfcal{X}^*$.
  Thus, the standard convergence principle (cf.~\cite[Kap.~I, Lemma 5.4]{GGZ74}) yields
  the assertion.
	
  \textbf{ad (ii)} Let $(\boldsymbol{x}_n)_{n\in\mathbb{N}}\subseteq
  \mathbfcal{X}\cap_{\boldsymbol{j}}\mathbfcal{Y}$ satisfy
  \eqref{eq:3.3}--\eqref{eq:3.4} and
  $\limsup_{n\to\infty}{\langle(\mathbfcal{A}+\mathbfcal{B})\boldsymbol{x}_n,\boldsymbol{x}_n
    -\boldsymbol{x}\rangle_{\mathbfcal{X}}}\leq
  0$.  Set $a_n:=\langle\mathbfcal{A}\boldsymbol{x}_n,\boldsymbol{x}_n
  -\boldsymbol{x}\rangle_{\mathbfcal{X}}$ and
  $b_n:=\langle\mathbfcal{B}\boldsymbol{x}_n,\boldsymbol{x}_n
  -\boldsymbol{x}\rangle_{\mathbfcal{X}}$ for
  $n\in\mathbb{N}$.  Then, it holds $\limsup_{n\to\infty}{a_n}\leq
  0$ and $\limsup_{n\to\infty}{b_n}\leq
  0$. In fact, suppose on the contrary, e.g.~that
  ${\limsup_{n\to\infty}{a_n}=a>
  0}$. Then, there exists a subsequence such that
  $a_{n_k}\to
  a$ $(k\to\infty)$, and therefore $\limsup_{k\to\infty}{b_{n_k}}\leq
  \limsup_{k\to\infty}{a_{n_k}+b_{n_k}}-\lim_{k\to\infty}{a_{n_k}}\leq-a<0$,
  i.e., a contradiction, since then the Bochner pseudo-monotonicity of
  $\mathbfcal{B}:D(\bfB)\subseteq \mathbfcal{X}\rightarrow\mathbfcal{X}^*$
  implies $0\leq \liminf_{n\to\infty,n\in\Lambda}{b_n}<
  0$. Thus, we have $\limsup_{n\to\infty}{a_n}\leq
  0$ and $\limsup_{n\to\infty}{b_n}\leq
  0$, and the Bochner pseudo-monotonicity of the operators 
  $\mathbfcal{A}$ and $\mathbfcal{B}$
  provides $\langle\mathbfcal{A}\boldsymbol{x},\boldsymbol{x}
  -\boldsymbol{y}\rangle_{\mathbfcal{X}}\leq
  \liminf_{n\to\infty}{\langle\mathbfcal{A}\boldsymbol{x}_n,\boldsymbol{x}_n
    -\boldsymbol{y}\rangle_{\mathbfcal{X}}}$ and
  $\langle\mathbfcal{B}\boldsymbol{x},\boldsymbol{x}
  -\boldsymbol{y}\rangle_{\mathbfcal{X}}\leq
  \liminf_{n\to\infty}{\langle\mathbfcal{B}\boldsymbol{x}_n,\boldsymbol{x}_n
    -\boldsymbol{y}\rangle_{\mathbfcal{X}}}$. Summing these
  inequalities yields the assertion.  \hfill$\square$
\end{proof}

\begin{defn}[Bochner coercivity]\label{3.20}
  Let $(V,H,j)$ be a pre-evolution triple and $1<p<\infty$. An
  operator
  $\mathbfcal{A}: D(\bfA) \subseteq \mathbfcal{X} \rightarrow\mathbfcal{X}^*$
  with  $D(\bfA) \supseteq \mathbfcal{X}\cap_{\boldsymbol{j}}\mathbfcal{Y}$
  is said to be
	\begin{description}[(ii)]
		\item[(i) \textbf{Bochner coercive with respect to
			$\boldsymbol{f}\in\mathbfcal{X}^*$ and $\boldsymbol{x}_0\in H$}]if there
		exists a constant \linebreak\mbox{$M:=M(\boldsymbol{f},\boldsymbol{x}_0,\mathbfcal{A})>0$} such
		that for all
		$\boldsymbol{x}\in\mathbfcal{X}\cap_{\boldsymbol{j}}\mathbfcal{Y}$
		from
		\begin{align}
		\frac{1}{2}\|(\boldsymbol{j}\boldsymbol{x})(t)\|_H^2
		+\int_{0}^{t}{\langle(\mathbfcal{A}\boldsymbol{x})(s)
			-\boldsymbol{f}(s),\boldsymbol{x}(s)\rangle_{V\cap_j H}\,ds}
		\leq \frac{1}{2}\|\boldsymbol{x}_0\|_H^2\quad\text{ for a.e. }t\in I\label{eq:bcoer}
		\end{align}
		it follows that $\|\boldsymbol{x}\|_{\mathbfcal{X}\cap_{\boldsymbol{j}}\mathbfcal{Y}}\leq M$.
		\item[(ii) \textbf{Bochner coercive}]if it is Bochner coercive with
		respect to $\boldsymbol{f}$ and $\boldsymbol{x}_0$ for all
		$\boldsymbol{f}\in\mathbfcal{X}^*$ and $\boldsymbol{x}_0\in H$.
	\end{description}
\end{defn}

Note that Bochner coercivity, similar to semi-coercivity
(cf.~\cite{Rou05}) in conjunction with Gronwall's inequality, takes
into account the information from the operator and the time
derivative. In fact, Bochner coercivity is a more general property. In
the context of the main theorem on pseudo-monotone perturbations of
maximal monotone mapping (cf.~\cite[§32.4.]{Zei90B}), which implies
Theorem \ref{thm:b}, Bochner coercivity is phrased in the spirit of a
local coercivity\footnote{$A:D(A)\subseteq V\to V^*$ is said to be
  coercive (cf.~\cite[§32.4.]{Zei90B}) with respect to $f\in V^*$, if
  $D(A)$ is unbounded and there exists a constant $R>0$, such that for
  $v\in V$ from $\langle Av,v\rangle_V\leq \langle f,v\rangle_V$ it
  follows $\|v\|_V\leq R$, i.e., all elements whose images with
  respect to $A$ do not grow beyond the data $f$ in this weak sense
  are contained in a fixed ball in $V$.} type condition of
$\frac{d_e}{dt}+\mathbfcal{A}:\mathbfcal{W}\subseteq \mathbfcal{X}\to
\mathbfcal{X}^*$. Being more precise, if
$\mathbfcal{A}:D(\mathbfcal{A})\subseteq \mathbfcal{X}\to
\mathbfcal{X}^*$ is Bochner coercive with respect to
$\boldsymbol{f}\in\mathbfcal{X}^*$ and $\boldsymbol{x}_0\in H$, then
for $\boldsymbol{x}\in \mathbfcal{W}$ from
${\|(\boldsymbol{j}\boldsymbol{x})(0)\|_H\leq
  \|\boldsymbol{x}_0\|_H}$, i.e., 
$\langle
\frac{d_e\boldsymbol{x}}{dt},\boldsymbol{x}\rangle_{\mathbfcal{X}}\ge
-\frac{1}{2}\|\boldsymbol{x}_0\|_H^2$, and
\begin{align}
	\bigg\langle \frac{d_e\boldsymbol{x}}{dt}+\mathbfcal{A}\boldsymbol{x},\boldsymbol{x}\chi_{\left[0,t\right]}\bigg\rangle_{\mathbfcal{X}}\leq \langle \boldsymbol{f},\boldsymbol{x}\chi_{\left[0,t\right]}\rangle_{\mathbfcal{X}}\quad\text{ for a.e. }t\in I.\label{eq:bcoer2}
\end{align}
it follows $\|\boldsymbol{x}\|_{\mathbfcal{X}\cap_{\boldsymbol{j}}\mathbfcal{Y}}\leq M$, since \eqref{eq:bcoer2} is just \eqref{eq:bcoer}. In other words, if the image of $\boldsymbol{x}\in\mathbfcal{W}$ with respect to $\frac{d_e}{dt}$ and $\mathbfcal{A}$ is bounded by the data $\boldsymbol{x}_0$, $\boldsymbol{f}$ in this weak sense, then $\boldsymbol{x}$ is contained in a fixed ball in $\mathbfcal{X}\cap_{\boldsymbol{j}}\mathbfcal{Y}$. We chose \eqref{eq:bcoer} instead of \eqref{eq:bcoer2} in Definition \ref{3.20}, since $\boldsymbol{x}\in \mathbfcal{X}\cap_{\boldsymbol{j}}\mathbfcal{Y}$ is not admissible in \eqref{eq:bcoer2}. 

Apart from that, there is a relation between Bochner coercivity and coercivity in the sense of Definition \ref{2.1}. In fact, in the case of
bounded operators
$\mathbfcal{A}:\mathbfcal{X}\rightarrow \mathbfcal{X}^*$, Bochner
coercivity extends the standard concept of coercivity.

\begin{lem}\label{3.21}
  Let $(V,H,j)$ be a pre-evolution triple and $1<p<\infty$. If the
  operator $\mathbfcal{A}:  D(\mathbfcal {A}) \subseteq \mathbfcal{X}
  \rightarrow \mathbfcal{X}^*$ with
  $D(\mathbfcal A)=\mathbfcal{X} \cap_{\boldsymbol{j}} \mathbfcal{Y}$
  is coercive and bounded, then $\mathbfcal{A}$ is Bochner coercive.
\end{lem}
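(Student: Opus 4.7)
The plan is a contradiction argument that reduces the pointwise-in-time energy inequality \eqref{eq:bcoer} to the standard coercivity bound. Fix $\boldsymbol{f}\in\mathbfcal{X}^*$ and $\boldsymbol{x}_0\in H$, and suppose that no constant $M>0$ makes $\mathbfcal{A}$ Bochner coercive with respect to these data. Then there exists a sequence $(\boldsymbol{x}_n)_{n\in\mathbb{N}}\subseteq\mathbfcal{X}\cap_{\boldsymbol{j}}\mathbfcal{Y}$ which satisfies \eqref{eq:bcoer} and whose norms $N_n:=\|\boldsymbol{x}_n\|_{\mathbfcal{X}\cap_{\boldsymbol{j}}\mathbfcal{Y}}=\|\boldsymbol{x}_n\|_{\mathbfcal{X}}+\|\boldsymbol{j}\boldsymbol{x}_n\|_{\mathbfcal{Y}}$ tend to infinity.

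The first step is to convert \eqref{eq:bcoer} into a duality bound on $\mathbfcal{X}$. Discarding the non-negative term $\tfrac12\|(\boldsymbol{j}\boldsymbol{x}_n)(t)\|_H^2$ gives, for a.e.~$t\in I$,
\[
\int_0^t\langle(\mathbfcal{A}\boldsymbol{x}_n)(s),\boldsymbol{x}_n(s)\rangle_{V\cap_jH}\,ds\leq\tfrac12\|\boldsymbol{x}_0\|_H^2+\int_0^t\langle\boldsymbol{f}(s),\boldsymbol{x}_n(s)\rangle_{V\cap_jH}\,ds.
\]
Both integrands lie in $L^1(I)$ by H\"older's inequality, since $\mathbfcal{A}\boldsymbol{x}_n,\boldsymbol{f}\in\mathbfcal{X}^*$ and $\boldsymbol{x}_n\in\mathbfcal{X}$. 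Letting $t_k\nearrow T$ along admissible values and using absolute continuity of the Lebesgue integral yields
\[
\langle\mathbfcal{A}\boldsymbol{x}_n,\boldsymbol{x}_n\rangle_{\mathbfcal{X}}\leq\tfrac12\|\boldsymbol{x}_0\|_H^2+\|\boldsymbol{f}\|_{\mathbfcal{X}^*}\|\boldsymbol{x}_n\|_{\mathbfcal{X}}.
\]

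Dividing by $N_n$ and using $\|\boldsymbol{x}_n\|_{\mathbfcal{X}}\leq N_n$ gives
\[
\frac{\langle\mathbfcal{A}\boldsymbol{x}_n,\boldsymbol{x}_n\rangle_{\mathbfcal{X}}}{N_n}\leq\frac{\|\boldsymbol{x}_0\|_H^2}{2N_n}+\|\boldsymbol{f}\|_{\mathbfcal{X}^*},
\]
whose right-hand side remains bounded as $N_n\to\infty$. This contradicts the coercivity of $\mathbfcal{A}$, which, read on the domain $\mathbfcal{X}\cap_{\boldsymbol{j}}\mathbfcal{Y}$ via the canonical embedding into $\mathbfcal{X}$, forces the quotient $\langle\mathbfcal{A}\boldsymbol{x}_n,\boldsymbol{x}_n\rangle_{\mathbfcal{X}}/N_n$ to diverge whenever $N_n\to\infty$.

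The main obstacle is conceptual rather than computational: one must correctly interpret the coercivity of $\mathbfcal{A}$ in a setting where its codomain $\mathbfcal{X}^*$ is not literally the dual of its domain $\mathbfcal{X}\cap_{\boldsymbol{j}}\mathbfcal{Y}$, so that the quotient appearing in the coercivity condition is controlled by $N_n$ rather than by $\|\boldsymbol{x}_n\|_{\mathbfcal{X}}$ alone; the routine technical point is the passage $t_k\nearrow T$, which is only admissible because $\langle(\mathbfcal{A}\boldsymbol{x}_n)(\cdot),\boldsymbol{x}_n(\cdot)\rangle_{V\cap_jH}$ belongs to $L^1(I)$. Boundedness of $\mathbfcal{A}$ is not explicitly invoked in this contradiction but is a natural well-posedness hypothesis on the setup. \qed
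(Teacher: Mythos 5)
There is a genuine gap in your argument, and it is precisely the point your last sentence waves away.

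In the paper's Definition~2.1~(ii) coercivity of $\mathbfcal{A}\colon D(\mathbfcal A)\subseteq\mathbfcal X\to\mathbfcal X^*$ is phrased using the norm of the \emph{ambient} space $\mathbfcal X$: it asserts that $\langle\mathbfcal A\boldsymbol x,\boldsymbol x\rangle_{\mathbfcal X}/\|\boldsymbol x\|_{\mathbfcal X}\to\infty$ as $\|\boldsymbol x\|_{\mathbfcal X}\to\infty$ with $\boldsymbol x\in D(\mathbfcal A)$. It is \emph{not} a statement about the quotient $\langle\mathbfcal A\boldsymbol x_n,\boldsymbol x_n\rangle_{\mathbfcal X}/N_n$ with $N_n:=\|\boldsymbol x_n\|_{\mathbfcal X\cap_{\boldsymbol j}\mathbfcal Y}$. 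Since $N_n\to\infty$ does not force $\|\boldsymbol x_n\|_{\mathbfcal X}\to\infty$ (the $L^\infty(I,H)$-component of the norm may blow up while the $L^p(I,V\cap_jH)$-component stays bounded), your final step produces no contradiction: the coercivity hypothesis simply says nothing about such a sequence. Your contradiction argument, corrected, would at best yield $\sup_n\|\boldsymbol x_n\|_{\mathbfcal X}<\infty$, not $\sup_nN_n<\infty$.

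This is exactly why the boundedness of $\mathbfcal A$ cannot be discarded as ``a natural well-posedness hypothesis''; it is a load-bearing part of the proof. The paper's argument runs in two stages: first, coercivity in $\mathbfcal X$ together with the inequality obtained from \eqref{eq:bcoer} after dropping the nonnegative term and letting $t\nearrow T$ yields a bound $\|\boldsymbol x\|_{\mathbfcal X}\le M_0$. Second, one returns to \eqref{eq:bcoer} and keeps the term $\tfrac12\|(\boldsymbol j\boldsymbol x)(t)\|_H^2$, estimating $|\langle\mathbfcal A\boldsymbol x,\boldsymbol x\chi_{[0,t]}\rangle_{\mathbfcal X}|\le\|\mathbfcal A\boldsymbol x\|_{\mathbfcal X^*}\|\boldsymbol x\|_{\mathbfcal X}\le\Lambda M_0$, where $\Lambda$ comes from the boundedness of $\mathbfcal A$ on the $\mathbfcal X$-ball of radius $M_0$. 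This supplies the missing $\mathbfcal Y$-bound $\|\boldsymbol j\boldsymbol x\|_{\mathbfcal Y}^2\le\|\boldsymbol x_0\|_H^2+2\Lambda M_0$ and hence $\|\boldsymbol x\|_{\mathbfcal X\cap_{\boldsymbol j}\mathbfcal Y}\le M$. Your sketch stops after the first stage (and, as written, already misreads what coercivity controls), so the lemma's conclusion does not follow. Note also that your contradiction framing and the paper's direct estimation are otherwise close; the paper even reduces to $\boldsymbol f=\boldsymbol 0$ by passing to the shifted operator $\mathbfcal A-\boldsymbol f$, which avoids carrying the $\|\boldsymbol f\|_{\mathbfcal X^*}\|\boldsymbol x_n\|_{\mathbfcal X}$ term through the argument.
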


\begin{proof}
  It suffices to show that
  $\mathbfcal{A}: D(\mathbfcal {A}) \subseteq \mathbfcal{X}
  \rightarrow\mathbfcal{X}^* $ is Bochner coercive with respect to $\boldsymbol{0}\in\mathbfcal{X}^*$ and $\boldsymbol{x}_0\in H$. For
  $\boldsymbol{f}\in \mathbfcal{X}^*\setminus\{\boldsymbol{0}\}$, we
  consider the shifted operator
  $\widehat{\mathbfcal{A}}:=\mathbfcal{A}-\boldsymbol{f}:D(\mathbfcal{A})\subset
  \mathbfcal{X}\rightarrow \mathbfcal{X}^*$ which is still coercive and
  bounded. Therefore, $\widehat{\mathbfcal{A}} $ is Bochner coercive
  with respect to $\boldsymbol{0}\in\mathbfcal{X}^*$ and $\boldsymbol{x}_0\in H$, and we conclude
  that $\mathbfcal{A}$ is Bochner coercive. To show that
  $\mathbfcal{A}:\mathbfcal{X}\cap_{\boldsymbol{j}}\mathbfcal{Y}
  \rightarrow\mathbfcal{X}^*$ is Bochner coercive with respect to
  $\boldsymbol 0$ and $\boldsymbol{x}_0$, we assume
  that
  $\boldsymbol{x}\in\mathbfcal{X}\cap_{\boldsymbol{j}}\mathbfcal{Y}$
  satisfies for almost every $t\in I$
  \begin{align}
    \frac{1}{2}{\|(\boldsymbol{j}\boldsymbol{x})(t)\|_H^2}
    +\langle\mathbfcal{A}\boldsymbol{x},\boldsymbol{x}
    \chi_{\left[0,t\right]}\rangle_{\mathbfcal{X}}\leq \frac{1}{2}\|\boldsymbol{x}_0\|_H^2.\label{eq:3.23}
  \end{align}
  Since
  $\mathbfcal{A}\colon D(\mathbfcal A) \subset \mathbfcal X \to
  \mathbfcal X^*$ is coercive there exists a constant
  $R:=R(\mathbfcal{A})>0$ such that
  $\langle
  \mathbfcal{A}\boldsymbol{w},\boldsymbol{w}\rangle_{\mathbfcal{X}}\ge
  \|\boldsymbol{w}\|_{\mathbfcal{X}}$ for all
  $\boldsymbol{w}\in D(\bfA) \supseteq \mathbfcal{X}\cap_{\boldsymbol{j}}\mathbfcal{Y}$
  such that $\|\boldsymbol{w}\|_\mathbfcal{X}\ge R$.  Next, we define
  $M_0:=\max\{R,\frac{1}{2}\|\boldsymbol{x}_0\|_H^2\}>0$ and suppose that
  $\|\boldsymbol{x}\|_\mathbfcal{X}>M_0\ge R$. Therefore, using the
  coercivity and \eqref{eq:3.23}, we conclude
  $M_0 <\|\boldsymbol{x}\|_\mathbfcal{X}\le \langle
  \mathbfcal{A}\boldsymbol{x},\boldsymbol{x}\rangle_\mathbfcal{X} \le
  \frac{1}{2}\|\boldsymbol{x}_0\|_H^2 \le M_0$, which is a contradiction. Thus,
  $\|\boldsymbol{x}\|_\mathbfcal{X}\leq M_0$ has to be valid. As
  $\mathbfcal{A}\colon D(\mathbfcal A) \subset \mathbfcal X \to
  \mathbfcal X^*$ is bounded there exists a constant
  $\Lambda:=\Lambda(M_0)>0$ such that
  $\|\mathbfcal A\boldsymbol{w}\|_{\mathbfcal{X}^*}\leq \Lambda$ for
  all
  $\boldsymbol{w}\in \mathbfcal{X}\cap_{\boldsymbol{j}}\mathbfcal{Y}$
  with $\|\boldsymbol{w}\|_{\mathbfcal{X}}\leq M_0$. This and
  \eqref{eq:3.23} imply
  $\|\boldsymbol{j}\boldsymbol{x}\|_{\mathbfcal{Y}}^2\leq \|\boldsymbol{x}_0\|_H^2+2\Lambda
  M_0$, which yields
  $\|\boldsymbol{x}\|_{\mathbfcal{X}\cap_{\boldsymbol{j}}\mathbfcal{Y}}\leq
  M_0+(\|\boldsymbol{x}_0\|_H^2+2\Lambda
  	M_0)^{1/2}=:M$.\hfill$\square$
\end{proof}

The following proposition provides sufficient conditions on a time-dependent family of operators such that the induced operator is bounded, Bochner pseudo-monotone and Bochner coercive.

\begin{prop}\label{3.1}
  Let $(V,H,j)$ be a pre-evolution triple and
  $1<p<\infty$. Furthermore, let $A(t):V\cap_j H\to (V\cap_j H)^*$, $t\in I$, be a family of
  operators satisfying (\hyperlink{C.1}{C.1}) and
  (\hyperlink{C.2}{C.2}). Then it holds:
  \begin{description}[{(iii)}]
  \item[(i)] If in addition $\{A(t)\}_{t\in I}$ satisfies
    (\hyperlink{C.3}{C.3}), where
    \begin{description}[\textbf{(C.3)}]
    \item[\textbf{(C.3)}]\hypertarget{C.3} For some non-negative
      functions $\alpha,\gamma\in L^{p'}(I)$, $\beta\in L^\infty(I)$
      and a non-decreasing function
      $\mathscr{B}:\mathbb{R}_{\ge 0}\rightarrow \mathbb{R}_{\ge 0}$
      holds
      \begin{align*}
        \left\|A(t)v\right\|_{(V\cap_j H)^*}\leq \mathscr{B}(\|jv\|_H)(\alpha(t)+\beta(t)\|v\|_{V}^{p-1})+\gamma(t)
      \end{align*}
      for almost every $t\in I$ and all $v\in V\cap_j H$.
    \end{description}
    then the induced operator
    $\mathbfcal{A}:D(\bfA) \subseteq
    \mathbfcal{X}\rightarrow\mathbfcal{X}^*$ with
    $D(\bfA) =\mathbfcal{X}\cap_{\boldsymbol{j}} \mathbfcal{Y}$ is
    well-defined. Moreover, $\bfA$ maps bounded sets in
    $\mathbfcal{X}\cap_{\boldsymbol{j}} \mathbfcal{Y}$ into bounded
    sets in $\bfX^*$, i.e., $\bfA$ viewed as an operator from $\mathbfcal{X}\cap_{\boldsymbol{j}}
    \mathbfcal{Y}$ into $\bfX^*$ is bounded. 
  \item[(ii)] If in addition $\{A(t)\}_{t\in I}$
    satisfies (\hyperlink{C.3}{C.3}), (\hyperlink{C.4}{C.4}) and  (\hyperlink{C.5}{C.5}), where
    \begin{description}[\textbf{(C.3)}]
    \item[\textbf{(C.4)}]
      \hypertarget{C.4}{}$A(t):V\cap_j H\rightarrow (V\cap_j H)^*$ is
      pseudo-monotone for almost every $t\in I$.
    \item[\textbf{(C.5)}] \hypertarget{C.5}{} For some constant
      $c_0>0$, non-negative functions
      $c_1,c_2\in L^1(I,\mathbb{R}_{\ge 0})$ and a non-decreasing
      function
      $\mathscr{C}:\mathbb{R}_{\ge 0}\rightarrow \mathbb{R}_{\ge 0}$
      holds
      \begin{align*}
        \langle A(t)v,v\rangle_{V\cap_j H}\ge c_0\|v\|_V^p-c_1(t)\mathscr{C}(\|jv\|_H)-c_2(t)
      \end{align*}
      for almost every $t\in I$ and all $v\in V\cap_j H$.
    \end{description}
    then $\mathbfcal{A}:D(\bfA) \subseteq
    \mathbfcal{X}\rightarrow\mathbfcal{X}^*$ with
    $D(\bfA) =\mathbfcal{X}\cap_{\boldsymbol{j}} \mathbfcal{Y}$ is Bochner 
    pseudo-monotone.
  \item[(iii)] If in addition $\{A(t)\}_{t\in I}$
    satisfies  (\hyperlink{C.3}{C.3}), (\hyperlink{C.4}{C.4}) and (\hyperlink{C.5}{C.5}) with $\mathscr{C}(s)=s^2$, then 
    $\mathbfcal{A}:D(\bfA) \subseteq
    \mathbfcal{X}\rightarrow\mathbfcal{X}^*$ with
    $D(\bfA) =\mathbfcal{X}\cap_{\boldsymbol{j}} \mathbfcal{Y}$ is Bochner coercive.
  \end{description}
\end{prop}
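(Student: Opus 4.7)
The three parts are treated in order; part~(ii) carries the core difficulty, since weak convergence in $\mathbfcal{X}$ does not imply pointwise a.e.\ weak convergence in $V\cap_j H$ (cf.~Remark~\ref{rem:2.16}), so the pointwise pseudo-monotonicity (\hyperlink{C.4}{C.4}) cannot be invoked directly. Part~(i) follows from Lemma~\ref{inda} together with (\hyperlink{C.3}{C.3}): for $\boldsymbol{x}\in\mathbfcal{X}\cap_{\boldsymbol{j}}\mathbfcal{Y}$ the bound $\|j\boldsymbol{x}(t)\|_H\leq\|\boldsymbol{j}\boldsymbol{x}\|_{\mathbfcal{Y}}$ lets me freeze $\mathscr{B}(\|j\boldsymbol{x}(t)\|_H)$, and $(p-1)p'=p$ together with the integrability of $\alpha,\beta,\gamma$ yields $\|\mathbfcal{A}\boldsymbol{x}\|_{\mathbfcal{X}^*}^{p'}\lesssim \|\alpha\|_{L^{p'}(I)}^{p'}+\|\beta\|_{L^\infty(I)}^{p'}\|\boldsymbol{x}\|_{L^p(I,V)}^p+\|\gamma\|_{L^{p'}(I)}^{p'}$, which delivers both well-definedness in $\mathbfcal{X}^*$ and boundedness.

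For~(ii), let $(\boldsymbol{x}_n)\subseteq \mathbfcal{X}\cap_{\boldsymbol{j}}\mathbfcal{Y}$ satisfy \eqref{eq:3.3}--\eqref{eq:3.5}; condition \eqref{eq:3.3a} furnishes a uniform bound $\|j\boldsymbol{x}_n(t)\|_H\leq M$ for a.e.\ $t$. Combining this with (\hyperlink{C.3}{C.3}), (\hyperlink{C.5}{C.5}) and Young's inequality yields the pointwise lower bound
\begin{align*}
\rho_n(t):=\langle A(t)\boldsymbol{x}_n(t),\boldsymbol{x}_n(t)-\boldsymbol{x}(t)\rangle_{V\cap_j H}\geq \tfrac{c_0}{2}\|\boldsymbol{x}_n(t)\|_V^p-g(t),\qquad g\in L^1(I).
\end{align*}
The key step is to prove $\liminf_n\rho_n(t)\geq 0$ a.e.: if $L:=\liminf_n\rho_n(t)<0$ at some such $t$, a subsequence with $\rho_{n_k}(t)\to L$ must be $V$-bounded by the above lower bound, so by reflexivity a further subsequence satisfies $\boldsymbol{x}_{n_{k_l}}(t)\rightharpoonup z$ in $V\cap_j H$; \eqref{eq:3.4} and the injectivity of $j$ force $z=\boldsymbol{x}(t)$, and (\hyperlink{C.4}{C.4}) applied with $\limsup_l\rho_{n_{k_l}}(t)=L\le 0$ and test vector $\boldsymbol{x}(t)$ gives $L\geq 0$, a contradiction. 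Fatou's lemma applied to $\rho_n+g\geq 0$ together with \eqref{eq:3.5} then forces $\int_I\liminf_n\rho_n\,dt=0$, hence $\liminf_n\rho_n(t)=0$ a.e.\ and $\int_I\rho_n\,dt\to 0$; combined with $(\rho_n)_-\leq g$ and $(\rho_n)_-\to 0$ a.e., dominated convergence upgrades this to $\rho_n\to 0$ in $L^1(I)$.

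Passing to a subsequence (not relabeled) with $\rho_n(t)\to 0$ a.e., the lower bound now makes $(\boldsymbol{x}_n(t))$ bounded in $V\cap_j H$ a.e., and a further ($t$-dependent) weak extraction together with the same injectivity identification gives $\boldsymbol{x}_{n_k}(t)\rightharpoonup\boldsymbol{x}(t)$ in $V\cap_j H$; (\hyperlink{C.4}{C.4}) with $\limsup_k\rho_{n_k}(t)=0$ then delivers $\liminf_k\langle A(t)\boldsymbol{x}_{n_k}(t),\boldsymbol{x}_{n_k}(t)-\boldsymbol{y}(t)\rangle_{V\cap_j H}\geq \langle A(t)\boldsymbol{x}(t),\boldsymbol{x}(t)-\boldsymbol{y}(t)\rangle_{V\cap_j H}$ for any $\boldsymbol{y}\in\mathbfcal{X}$. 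The standard subsequence principle (cf.~\cite[Kap.~I, Lemma~5.4]{GGZ74}) lifts this pointwise inequality to $\liminf_n$ of the full sequence, and a final Fatou step (with an analogous lower bound $\sigma_n\geq -g_{\boldsymbol{y}}\in L^1$ obtained as for $\rho_n$) upgrades it to $\liminf_n\langle\mathbfcal{A}\boldsymbol{x}_n,\boldsymbol{x}_n-\boldsymbol{y}\rangle_{\mathbfcal{X}}\geq\langle\mathbfcal{A}\boldsymbol{x},\boldsymbol{x}-\boldsymbol{y}\rangle_{\mathbfcal{X}}$; one more invocation of the subsequence principle removes the reliance on the chosen subsequence.

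For~(iii), substituting (\hyperlink{C.5}{C.5}) with $\mathscr{C}(s)=s^2$ into \eqref{eq:bcoer} produces
\begin{align*}
\tfrac{1}{2}\|(\boldsymbol{j}\boldsymbol{x})(t)\|_H^2+c_0\int_0^t\|\boldsymbol{x}(s)\|_V^p\,ds\leq \tfrac{1}{2}\|\boldsymbol{x}_0\|_H^2+\|c_2\|_{L^1(I)}+\int_0^t c_1(s)\|(\boldsymbol{j}\boldsymbol{x})(s)\|_H^2\,ds+\int_0^t \langle\boldsymbol{f}(s),\boldsymbol{x}(s)\rangle_{V\cap_j H}\,ds.
\end{align*}
Splitting $\|\boldsymbol{x}(s)\|_{V\cap_j H}=\|\boldsymbol{x}(s)\|_V+\|j\boldsymbol{x}(s)\|_H$ and applying Young's inequality absorbs a fraction of $c_0\int_0^t\|\boldsymbol{x}\|_V^p\,ds$ at the cost of a constant $C(\|\boldsymbol{f}\|_{\mathbfcal{X}^*})$ and an additional $\int_0^t\|(\boldsymbol{j}\boldsymbol{x})(s)\|_H^2\,ds$; Gronwall's inequality applied to $\phi(t):=\tfrac{1}{2}\|(\boldsymbol{j}\boldsymbol{x})(t)\|_H^2$ yields a uniform bound on $\|\boldsymbol{j}\boldsymbol{x}\|_{\mathbfcal{Y}}$, which reinserted into the estimate controls $\|\boldsymbol{x}\|_{L^p(I,V)}$ and hence $\|\boldsymbol{x}\|_{\mathbfcal{X}\cap_{\boldsymbol{j}}\mathbfcal{Y}}$ by a constant depending only on the data.
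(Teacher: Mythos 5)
Your overall strategy in all three parts matches the paper's: part~(i) is the same estimate, part~(ii) follows the paper's four-step scheme (pointwise Young lower bound from \hyperlink{C.3}{(C.3)}/\hyperlink{C.5}{(C.5)}, pointwise $\liminf\ge 0$ via \hyperlink{C.4}{(C.4)}, an $L^1$-convergence step, then a second pointwise use of \hyperlink{C.4}{(C.4)} followed by Fatou), and part~(iii) is Gronwall after inserting \hyperlink{C.5}{(C.5)}. There are, however, two places where the argument as written does not close.

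In part~(ii) the subsequence bookkeeping at the end does not work. You invoke the convergence principle of \cite[Kap.~I, Lemma~5.4]{GGZ74} to ``lift'' the pointwise inequality $\langle A(t)\boldsymbol{x}(t),\boldsymbol{x}(t)-\boldsymbol{y}(t)\rangle_{V\cap_j H}\le \liminf_{k}\langle A(t)\boldsymbol{x}_{n_k}(t),\boldsymbol{x}_{n_k}(t)-\boldsymbol{y}(t)\rangle_{V\cap_j H}$ from a ($t$-dependent) subsequence to the full sequence, and then once more to pass from your chosen (relabeled) subsequence back to the original sequence. But that principle upgrades subsequence \emph{limits} to full-sequence limits; it cannot turn a $\liminf$ along a subsequence into a $\liminf$ along the full sequence, because $\liminf_{n\in\Lambda'}\ge\liminf_{n}$ always goes the wrong way. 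The paper resolves exactly this by fixing, at the very start, a subsequence $\Lambda$ along which $\langle\mathbfcal A\boldsymbol{x}_n,\boldsymbol{x}_n\rangle_{\mathbfcal X}$ attains its $\liminf$ and $\mathbfcal A\boldsymbol{x}_n$ converges weakly (leading to \eqref{eq:3.8}), and then extracting all further subsequences \emph{inside} $\Lambda$. In your write-up you should likewise fix $\boldsymbol{y}\in\mathbfcal X$ and a subsequence attaining $\liminf_{n}\langle\mathbfcal A\boldsymbol{x}_n,\boldsymbol{x}_n-\boldsymbol{y}\rangle_{\mathbfcal X}$ before starting, and run the whole chain inside it; then the final Fatou step already gives the desired inequality for that $\boldsymbol{y}$ with no lifting needed. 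A second, smaller point: the ``further $t$-dependent weak extraction'' is unnecessary. Once the relabeled sequence satisfies $\rho_n(t)\to 0$ a.e.\ and the lower bound gives $\sup_n\|\boldsymbol{x}_n(t)\|_V<\infty$ a.e., Proposition~\ref{6.6}~(iv) together with \eqref{eq:3.4} already yields $\boldsymbol{x}_n(t)\rightharpoonup\boldsymbol{x}(t)$ in $V\cap_j H$ for the \emph{whole} relabeled sequence a.e., so \hyperlink{C.4}{(C.4)} applies along a $t$-independent index set, which is what Fatou actually needs.

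In part~(iii), keeping $\boldsymbol{f}$ explicit is fine in principle, but your Young step is too rough: from $\|\boldsymbol{f}(s)\|_{(V\cap_j H)^*}\|j\boldsymbol{x}(s)\|_H$ you cannot afford an untweighted ``$\int_0^t\|(\boldsymbol{j}\boldsymbol{x})(s)\|_H^2\,ds$'' at the cost of ``$C(\|\boldsymbol f\|_{\mathbfcal X^*})$'', since the discarded piece $\int_I\|\boldsymbol{f}\|^2$ may diverge when $p>2$ (then $p'<2$). You must weight the Young inequality by $\|\boldsymbol{f}(s)\|$, e.g.\ $\|\boldsymbol{f}(s)\|\,\|j\boldsymbol{x}(s)\|_H\le\tfrac12\|\boldsymbol{f}(s)\|+\tfrac12\|\boldsymbol{f}(s)\|\,\|j\boldsymbol{x}(s)\|_H^2$, which enlarges the Gronwall kernel from $c_1$ to $c_1+\tfrac12\|\boldsymbol{f}(\cdot)\|_{(V\cap_j H)^*}\in L^1(I)$; the paper sidesteps this altogether by first reducing to $\boldsymbol{f}=\boldsymbol{0}$ as in Lemma~\ref{3.21}. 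With that correction your Gronwall argument delivers the claimed bound on $\|\boldsymbol{x}\|_{\mathbfcal X\cap_{\boldsymbol{j}}\mathbfcal Y}$ in the same way as the paper's.
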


We emphasize that in applications the conditions
(\hyperlink{C.1}{C.1}), (\hyperlink{C.2}{C.2}), (\hyperlink{C.4}{C.4})
and (\hyperlink{C.5}{C.5}) are usually directly deducible from the
corresponding steady problem given trough the operator
$A(t):V\cap_j H\to (V\cap_j H)^*$ for fixed $t\in I$. Only the
verification of (\hyperlink{C.3}{C.3}) sometimes causes some moderate
effort. These circumstances are illustrated in the Examples \ref{5.1}
and \ref{5.2}.

\begin{proof}
  \textbf{ad (i)} \textbf{1. Well-definedness:} Due to Lemma
  \ref{inda} the induced operator
  $\mathbfcal{A}:\mathbfcal{M}(I,V\cap_j H)\to
  \mathbfcal{M}(I,(V\cap_j H)^*)$ is well-defined. Then,
  well-definedness of
  $\mathbfcal{A}: D(\bfA) \subseteq \mathbfcal{X}
  \rightarrow\mathbfcal{X}^*$ with
  $D(\bfA) =\mathbfcal{X}\cap_{\boldsymbol{j}} \mathbfcal{Y}$ follows
  from the estimate
  \begin{align}
    \left(\int_I{\|A(s)(\boldsymbol{x}(s))\|_{(V\cap_j
    H)^*}^{p'}\,ds}\right)^{\frac{1}{p'}}\!\!
    &\leq \left(\int_I{\left(\mathscr{B}(\|j(\boldsymbol{x}(s))\|_H)
      \big(\alpha(t)+\beta(t)\|\boldsymbol{x}(s)\|_{V\cap_j
      H}^{p-1}\big)
      +\gamma(s)\right)^{p'}\!\!ds}\right)^{\frac{1}{p'}} \notag 
    \\
    &\leq\mathscr{B}(\|\boldsymbol{j}\boldsymbol{x}\|_{\mathbfcal{Y}})
    \left(\|\alpha\|_{L^{p'}(I)}+\|\beta\|_{L^\infty(I)}\|\boldsymbol{x}\|_{\mathbfcal{X}}^{p-1}\right)
      +\|\gamma\|_{L^{p'}(I)}\hspace*{-6mm}\label{eq:bbd}
  \end{align}
  for all $\boldsymbol{x}\in\mathbfcal{X} \cap_{\boldsymbol{j}}\mathbfcal{Y}$,
  where we used (\hyperlink{C.3}{C.3}).

  \textbf{2. Boundedness:} The boundedness of
  $\mathbfcal{A}: D(\bfA) \subseteq \mathbfcal{X}
  \rightarrow\mathbfcal{X}^*$ with
  $D(\bfA) =\mathbfcal{X}\cap_{\boldsymbol{j}} \mathbfcal{Y}$ also follows
  from the estimate \eqref{eq:bbd}. 

\textbf{ad (ii)} The presented proof is a generalization of \cite[Lemma 4.2]{BR17}
and uses ideas from \cite{LM87,Hir1,Hir2,Shi97}. Our approach
completely avoids additional technical assumptions on the spaces, as
e.g.~the existence of certain projections, which were present in
previous investigations. We proceed in four steps:

%	\begin{description}
\textbf{1. Collecting information:} Let
$(\boldsymbol{x}_n)_{n\in\setN}\subseteq
\mathbfcal{X}\cap_{\boldsymbol{j}}\mathbfcal{Y}$ be a sequence
satisfying \eqref{eq:3.3}--\eqref{eq:3.5}. Thus,
$(\boldsymbol{x}_n)_{n \in \setN}$
is bounded in $\mathbfcal{X}\cap_{\boldsymbol{j}}\mathbfcal{Y}$, and
due to (i) the sequence $(\bfA\boldsymbol{x}_n)_{n \in \setN}$ is bounded in
$\bfX^*$.  From the reflexivity of $\mathbfcal{X}^*$ we obtain a
subsequence $(\boldsymbol{x}_n)_{n\in\Lambda}
$ %\subseteq \mathbfcal{X}\cap_{\boldsymbol{j}}\mathbfcal{Y}$
with
$\Lambda\subseteq\mathbb{N}$ and
$\boldsymbol{\xi}\in\mathbfcal{X}^*$ such that
${\mathbfcal{A}\boldsymbol{x}_n\overset{n\to\infty}{\rightharpoonup}\boldsymbol{\xi}\text{
    in }\mathbfcal{X}^*}$ $(n\in
\Lambda)$ and
$\lim_{\substack{n\rightarrow\infty\\n\in\Lambda}}{\langle
  \mathbfcal{A}\boldsymbol{x}_n,\boldsymbol{x}_n\rangle_{\mathbfcal{X}}}=
\liminf_{n\rightarrow\infty}{\langle\mathbfcal{A}
  \boldsymbol{x}_n,\boldsymbol{x}_n\rangle_{\mathbfcal{X}}}$. Thus, we
have for all $\boldsymbol{y}\in \mathbfcal{X}$
\begin{align}
\lim_{\substack{n\rightarrow\infty\\n\in\Lambda}}{
	\langle \mathbfcal{A}\boldsymbol{x}_n,\boldsymbol{x}_n-\boldsymbol{y}\rangle_{\mathbfcal{X}}}\leq
\liminf_{n\rightarrow\infty}{\langle\mathbfcal{A}
	\boldsymbol{x}_n,\boldsymbol{x}_n-\boldsymbol{y}
	\rangle_{\mathbfcal{X}}}.\label{eq:3.8}
\end{align}
Due to \eqref{eq:3.4} there exists a subset
$E\subseteq I$ such that $I\setminus E$ is a null set and for all $t\in E$
\begin{align}
  (\boldsymbol{j}\boldsymbol{x}_n)(t)
  \overset{n\rightarrow\infty}{\rightharpoonup}
  &(\boldsymbol{j}\boldsymbol{x})(t) \quad \text{  in  }H.\label{eq:3.9} 
\end{align}
In addition, using (\hyperlink{C.3}{C.3}) and
(\hyperlink{C.5}{C.5}) we get
\begin{align*}
  \langle A&(t)(\boldsymbol{x}_n(t)),\boldsymbol{x}_n(t)
             -\boldsymbol{x}(t)\rangle_{V\cap_j H}
  \\
           &\ge c_0\|\boldsymbol{x}_n(t)\|_V^p-
             c_1(t)\mathscr{C}(\|j(\boldsymbol{x}_n(t))\|_H)-c_2(t)
             -\langle
             A(t)(\boldsymbol{x}_n(t)),\boldsymbol{x}(t)\rangle_{V\cap_j H}
  \\
           &\ge 	c_0\|\boldsymbol{x}_n(t)\|_V^p
             -c_1(t)\mathscr{C}(\|j(\boldsymbol{x}_n(t))\|_H)-c_2(t)
  \\
           &\quad -\big (\mathscr{B}(\|j(\boldsymbol{x}_n(t))\|_H)(\alpha(t)
             +\beta(t)\|\boldsymbol{x}_n(t)\|_V^{p-1})+\gamma(t)\big)
             \|\boldsymbol{x}(t)\|_{V\cap_j H}
\end{align*}
for almost every $t\in I$. From
$\|\boldsymbol{j}\boldsymbol{x}_n\|_{\mathbfcal{Y}}\leq K$ for some
constant $K>0$, which follows from \eqref{eq:3.3a}, and the $\varepsilon$-Young inequality with
$k:=k(\varepsilon,p):=(p'\varepsilon)^{1-p}/p$ and
$\varepsilon:=(\mathscr{B}(K)\|\beta\|_{L^\infty(I)})^{-p'}c_0/2$ we
further obtain for all $n\in \Lambda$ and for almost every $t\in I$
\begin{align}
  \langle
  A(t)(\boldsymbol{x}_n(t)),\boldsymbol{x}_n(t)-
  \boldsymbol{x}(t)\rangle_{V\cap_j H}\ge
  \frac{c_0}{2}\|\boldsymbol{x}_n(t)\|_V^p
  -\mu_{\boldsymbol{x}}(t) \tag*{$(\ast)_{n,t}$},
\end{align}
where $\mu_{\boldsymbol{x}}(t):=c_1(t)\mathscr{C}(K)+c_2(t)+k\|\boldsymbol{x}(t)\|_{V\cap_j
	H}^p+(\mathscr{B}(K)\alpha(t)+\gamma(t))\|\boldsymbol{x}(t)\|_{V\cap_j
	H}\in L^1(I)$. Next, we define
\begin{align*}
  \boldsymbol{\mathcal{S}}:=
  \big \{t\in E \fdg &A(t):V\cap_j H\rightarrow (V\cap_j H)^*\text{ is
                       pseudo-monotone},
  \\
                     &\vert\mu_{\boldsymbol{x}}(t)\vert<\infty\text{
                       and }(\ast)_{n,t}\text{ holds for all }n\in\Lambda\big \}. 
\end{align*}
Apparently, $I\setminus\boldsymbol{\mathcal{S}}$ is a null set.

\textbf{2. Intermediate objective:} Our next objective is to verify for all $t\in\mathcal{S}$
\begin{align}
  \liminf_{\substack{n\rightarrow\infty\\n\in\Lambda}}
  {\langle A(t)(\boldsymbol{x}_n(t)),\boldsymbol{x}_n(t)
  -\boldsymbol{x}(t)\rangle_{V\cap_j H}}\ge 0. \tag*{$(\ast\ast )_{t}$}
\end{align}
To this end, let us fix an arbitrary
$t\in\boldsymbol{\mathcal{S}}$ and define
\begin{align*}
\Lambda_t:=\{n\in\Lambda\fdg
\langle A(t)(\boldsymbol{x}_n(t)),\boldsymbol{x}_n(t)
-\boldsymbol{x}(t)\rangle_{V\cap_j H}< 0\}.
\end{align*}
We assume without loss of generality that $\Lambda_t$
is not finite. Otherwise, $(\ast\ast )_{t}$ would already hold true
for this specific $t\in\mathcal{S}$ and nothing would be left to
do. But if $\Lambda_t$ is not finite, then 
\begin{align}
  \limsup_{\substack{n\rightarrow\infty\\n\in\Lambda_t}}
  {\langle A(t)(\boldsymbol{x}_n(t)),\boldsymbol{x}_n(t)-
  \boldsymbol{x}(t)\rangle_{V\cap_j H}}\leq 0.\label{eq:3.12}
\end{align}
From \eqref{eq:3.12} and $(\ast)_{n,t}$ follows for all $n\in\Lambda_t$
\begin{align}
  \frac{c_0}{2}\|\boldsymbol{x}_n(t)\|_V^p\leq\langle A(t)
  (\boldsymbol{x}_n(t)),\boldsymbol{x}_n(t)-
  \boldsymbol{x}(t)\rangle_{V\cap_j H}+
  \vert\mu_{\boldsymbol{x}}(t)\vert<\vert\mu_{\boldsymbol{x}}(t)\vert
  <\infty .\label{eq:3.13}
\end{align}
Thanks to \eqref{eq:3.9} and
\eqref{eq:3.13}, Proposition \ref{6.6} (iv) yields  that
\begin{align*}
  \boldsymbol{x}_n(t)\overset{n\rightarrow\infty}{\rightharpoonup}
  \boldsymbol{x}(t) \quad \text{ in }V\cap_j H\;(n\in \Lambda_t).
\end{align*}
The pseudo-monotonicity of $A(t):V\cap_j H\rightarrow (V\cap_j H)^*$
finally guarantees
\begin{align*}
\liminf_{\substack{n\rightarrow\infty\\n\in\Lambda_t}}
{\langle A(t)(\boldsymbol{x}_n(t)),\boldsymbol{x}_n(t)
	-\boldsymbol{x}(t)\rangle_{V\cap_j H}}\ge 0.
\end{align*}
Due to
$\langle A(t)(\boldsymbol{x}_n(t)), \boldsymbol{x}_n(t)
-\boldsymbol{x}(t)\rangle_{V\cap_j H}\ge 0$ for all
$n\in\Lambda\setminus\Lambda_t$, 
$(\ast\ast)_t$ %\eqref{eq:3.11}
holds for all $t\in\mathcal{S}$.

\textbf{3. Switching to the image space level:} In this passage we
verify the existence of a subsequence
$(\boldsymbol{x}_n)_{n\in\Lambda_0}\subseteq\mathbfcal{X}\cap_{\boldsymbol{j}}\mathbfcal{Y}$
with $\Lambda_0\subseteq\Lambda$ such that for almost every $t\in I$
\begin{equation}
  \begin{aligned}
    \boldsymbol{x}_n(t)\overset{n\rightarrow\infty}{\rightharpoonup}
    \boldsymbol{x}(t) \quad \text{ in }V\cap_j H\;(n\in \Lambda_0),
    \\
    \limsup_{\substack{n\rightarrow\infty\\n\in\Lambda_0}} {\langle
      A(t)(\boldsymbol{x}_n(t)),\boldsymbol{x}_n(t)
      -\boldsymbol{x}(t)\rangle_{V\cap_j H}}\leq 0.
  \end{aligned}
\label{eq:3.14}
\end{equation}
As a consequence, we are in a position to exploit the almost
everywhere pseudo-monotonicity of the operator family.  Thanks to
$\langle A(t)(\boldsymbol{x}_n(t)),\boldsymbol{x}_n(t)
-\boldsymbol{x}(t)\rangle_{V\cap_j H}\ge -\mu_{\boldsymbol{x}}(t)$ for
all $t\in\mathcal{S}$ and $n\in\Lambda$, Fatou's lemma
(cf.~\cite[Theorem 1.18]{Rou05}) is applicable. It yields, also using
$(\ast\ast)_t$ and \eqref{eq:3.5}
\begin{align}
\begin{split}
0&\leq
\int_I{\liminf_{\substack{n\rightarrow\infty\\n\in\Lambda}}
	{\langle A(s)(\boldsymbol{x}_n(s)),\boldsymbol{x}_n(s)-
		\boldsymbol{x}(s)\rangle_{V\cap_j H}}\,ds}
\\
&\leq
\liminf_{\substack{n\rightarrow\infty\\n\in\Lambda}}{\int_I{\langle
		A(s)(\boldsymbol{x}_n(s)),\boldsymbol{x}_n(s)-\boldsymbol{x}(s)\rangle_{V\cap_j
			H}\,ds}}\leq  \limsup_{n\rightarrow\infty}
{\langle\mathbfcal{A}\boldsymbol{x}_n,\boldsymbol{x}_n
	-\boldsymbol{x}\rangle_{\mathbfcal{X}}}\leq  0.
\end{split}\label{eq:3.15}
\end{align}
Let us define  $h_n(t):=\langle
A(t)(\boldsymbol{x}_n(t)),\boldsymbol{x}_n(t)-\boldsymbol{x}(t)\rangle_{V\cap_j
	H}$. Then $(\ast\ast)_t$ and %\eqref{eq:3.11} for all $t\in\mathcal{S}$ and
\eqref{eq:3.15} read:
\begin{align}
\liminf_{\substack{n\rightarrow\infty\\n\in\Lambda}}{h_n(t)}&\ge 0\text{ for all }t\in\mathcal{S}.\label{eq:3.16}\\
\lim_{\substack{n\rightarrow\infty\\n\in\Lambda}}{\int_I{h_n(s)\,ds}}&=0.\label{eq:3.17}
\end{align}
As $s\mapsto s^-:=\min\{0,s\}$ is continuous and non-decreasing we
deduce from \eqref{eq:3.16} that
\begin{align*}
0\ge\limsup_{\substack{n\rightarrow\infty\\n\in\Lambda}}
{h_n(t)^-}\ge\liminf_{\substack{n\rightarrow\infty\\n\in\Lambda}}
{h_n(t)^-}\ge \min\left\{0,
\liminf_{\substack{n\rightarrow\infty\\n\in\Lambda}}{h_n(t)}\right\}=0,
\end{align*}
i.e., $h_n(t)^-\overset{n\rightarrow\infty}{\rightarrow} 0$
$(n\in \Lambda)$ for all $t\in \mathcal{S}$. Since 
$0\ge h_n(t)^-\ge -\mu_{\boldsymbol{x}}(t)$ for all $t\in\mathcal{S}$ and
$n\in\Lambda$, Vitali's theorem yields
$h_n^-\overset{n\rightarrow\infty}{\rightarrow}0$ in $L^1(I)$.  From
the latter, $\vert h_n\vert=h_n-2h_n^-$ and \eqref{eq:3.17}, we
conclude that $h_n\overset{n\rightarrow\infty}{\rightarrow}0$ in
$L^1(I)$.  Thus, there exists a subsequence
$(\boldsymbol{x}_n)_{n\in\Lambda_0}$
with $\Lambda_0\subseteq\Lambda$ and a subset $F\subseteq I$ such
that $ I\setminus F$ is a null set and for all $t\in F$
\begin{align}
\lim_{\substack{n\rightarrow\infty\\n\in\Lambda_0}}{\langle
  A(t)(\boldsymbol{x}_n(t)),\boldsymbol{x}_n(t)-\boldsymbol{x}(t)\rangle_{V\cap_j
  H}}= 0. \label{eq:3.18}
\end{align}
Consequently, we have for all $t\in \boldsymbol{\mathcal{S}}\cap F$ 
\begin{align*}
\limsup_{\substack{n\rightarrow\infty\\n\in\Lambda_0}}
{\frac{c_0}{2}\|\boldsymbol{x}_n(t)\|_V^p}\leq
\limsup_{\substack{n\rightarrow\infty\\n\in\Lambda_0}}
{\langle A(t)(\boldsymbol{x}_n(t)),\boldsymbol{x}_n(t)
	-\boldsymbol{x}(t)\rangle_{V\cap_j H}+
	\vert\mu_{\boldsymbol{x}}(t)\vert}
=\vert\mu_{\boldsymbol{x}}(t)\vert<\infty.
\end{align*}
Thus, due to \eqref{eq:3.9}, Proposition \ref{6.6} (iv) yields 
\begin{align}
\boldsymbol{x}_n(t)\overset{n\rightarrow\infty}
{\rightharpoonup}\boldsymbol{x}(t) \quad \text{ in }
V\cap_j H\;(n\in \Lambda_0)\label{eq:3.19}
\end{align} 
for all $t\in \boldsymbol{\mathcal{S}}\cap F$. The relations 
\eqref{eq:3.18} and \eqref{eq:3.19} imply \eqref{eq:3.14}.

\textbf{4. Switching to the Bochner-Lebesgue level:} In view of the
pseudo-monotonicity of the operators $A(t):V\cap_j H\rightarrow (V\cap_j H)^*$ for
all $t\in\boldsymbol{\mathcal{S}}\cap F$ we deduce from
\eqref{eq:3.14} that 
\begin{align*}
\langle A(t)(\boldsymbol{x}(t)),\boldsymbol{x}(t)-
\boldsymbol{y}(t)\rangle_{V\cap_j H}\leq
\liminf_{\substack{n\rightarrow\infty\\n\in\Lambda_0}}
{\langle A(t)(\boldsymbol{x}_n(t)),\boldsymbol{x}_n(t)
	-\boldsymbol{y}(t)\rangle_{V\cap_j H}}
\end{align*}
almost every $t\in I$ and all $\boldsymbol{y}\in \mathbfcal{X}$. As
in step {\bf 1} we verify that
there exists $\mu_{\boldsymbol{y}}\in L^1(I)$ such that
\begin{align*}
\langle A(t)(\boldsymbol{x}_n(t)),\boldsymbol{x}_n(t)
-\boldsymbol{y}(t)\rangle_{V\cap_j H}\ge
\frac{c_0}{2}\|\boldsymbol{x}_n(t)\|_V^p-\mu_{\boldsymbol{y}}(t)
\end{align*}
for almost every $t\in I$ and all $n\in\Lambda_0$. Thus,
we can apply Fatou's lemma once more, exploit \eqref{eq:3.8} and deduce further that
\begin{align*}
\langle \mathbfcal{A}\boldsymbol{x},\boldsymbol{x}
-\boldsymbol{y}\rangle_{\mathbfcal{X}}
&\leq
\int_I{\liminf_{\substack{n\rightarrow\infty\\n\in\Lambda_0}}
	{\langle A(s)(\boldsymbol{x}_n(s)),\boldsymbol{x}_n(s)
		-\boldsymbol{y}(s)\rangle_{V\cap_j H}}\,ds}
\\
&\leq \liminf_{\substack{n\rightarrow\infty\\n\in\Lambda_0}}
{\int_I{\langle A(s)(\boldsymbol{x}_n(s)),\boldsymbol{x}_n(s)
		-\boldsymbol{y}(s)\rangle_{V\cap_j H}\,ds}}
\\
&=\lim_{\substack{n\rightarrow\infty\\n\in\Lambda}}
{\langle \mathbfcal{A}\boldsymbol{x}_n,\boldsymbol{x}_n
	-\boldsymbol{y}\rangle_{\mathbfcal{X}}}
\\
&\leq \liminf_{n\rightarrow\infty}{\langle \mathbfcal{A}
	\boldsymbol{x}_n,\boldsymbol{x}_n-\boldsymbol{y}\rangle_{\mathbfcal{X}}}
\end{align*}
for all $\boldsymbol{y}\in \mathbfcal{X}$,
i.e.,  $\mathbfcal{A}: D(\bfA) \subseteq \mathbfcal{X}
  \rightarrow\mathbfcal{X}^*$ with
  $D(\bfA) =\mathbfcal{X}\cap_{\boldsymbol{j}} \mathbfcal{Y}$ 
is Bochner pseudo-monotone.

\textbf{ad (iii)} As in the proof of Lemma \ref{3.21} it suffices to show that  $\mathbfcal{A}: D(\bfA) \subseteq \mathbfcal{X}
  \rightarrow\mathbfcal{X}^*$ with
  $D(\bfA) =\mathbfcal{X}\cap_{\boldsymbol{j}} \mathbfcal{Y}$
is Bochner coercive with respect to the origin
$ \boldsymbol{0}\in\mathbfcal{X}^*$ and $\boldsymbol{x}_0\in H$. To show that
 $\mathbfcal{A}: D(\bfA) \subseteq \mathbfcal{X}
  \rightarrow\mathbfcal{X}^*$ with
  $D(\bfA) =\mathbfcal{X}\cap_{\boldsymbol{j}} \mathbfcal{Y}$ is Bochner coercive with respect to
$\boldsymbol 0$ and $\boldsymbol{x}_0$, we assume
that
$\boldsymbol{x}\in\mathbfcal{X}\cap_{\boldsymbol{j}}\mathbfcal{Y}$
satisfies for almost every $t\in I$
\begin{align}
\frac{1}{2}\|(\boldsymbol{j}\boldsymbol{x})(t)\|_H^2+\int_0^t{\left\langle
	A(s)(\boldsymbol{x}(s)),\boldsymbol{x}(s)\right\rangle_{V\cap_j
		H}\,ds}\leq \frac{1}{2}\|\boldsymbol{x}_0\|_H^2.\label{eq:3.24} 
\end{align}
Using (\hyperlink{C.5}{C.5}) with $\mathscr{C}(s)=s^2$ in
\eqref{eq:3.24} we get   for almost every $t\in I$
\begin{align}
\frac{1}{2}\|(\boldsymbol{j}\boldsymbol{x})(t)\|_H^2+c_0\int_{0}^{t}{\|\boldsymbol{x}(s)\|_V^p\,ds}\leq
\frac{1}{2}\|\boldsymbol{x}_0\|_H^2+ \|c_2\|_{L^1(I)}+\int_0^t{\vert
	c_1(s)\vert\|(\boldsymbol{j}\boldsymbol{x})(s)\|_H^2\,ds}. \label{eq:3.25} 
\end{align}
Gronwall's inequality (cf. \cite[Lemma II.4.10]{BF13}) applied on
\eqref{eq:3.25} yields
\begin{align}
\|\boldsymbol{j}\boldsymbol{x}\|_{\mathbfcal{Y}}^2
\leq\|\boldsymbol{x}_0\|_H^2+2\|c_2\|_{L^1(I)}\text{exp}(2\|c_1\|_{L^1(I)})
=:K_0.\label{eq:3.26} 
\end{align}
From \eqref{eq:3.25}  and \eqref{eq:3.26} we further deduce that
\begin{align}
c_0\int_0^t{\|\boldsymbol{x}(s)\|_V^p\,ds}\leq
\frac{1}{2}\|\boldsymbol{x}_0\|_H^2+\|c_2\|_{L^1(I)}+K_0\|c_1\|_{L^1(I)}=:K_1\label{eq:3.27}
\end{align}
for all $t\in \overline{I}$. \eqref{eq:3.26} together with
\eqref{eq:3.27} reads
$\|\boldsymbol{x}\|_{L^p(I,V)\cap_{\boldsymbol{j}}\mathbfcal{Y}}\leq
(K_1/c_0)^{\frac{1}{p}}+K_0^{\frac{1}{2}}$. Due to the norm
equivalence
$\|\cdot\|_{L^p(I,V)\cap_{\boldsymbol{j}}\mathbfcal{Y}}\sim
\|\cdot\|_{\mathbfcal{X}\cap_{\boldsymbol{j}}\mathbfcal{Y}}$
(cf. Proposition \ref{2.6}) we conclude the Bochner coercivity with
respect to $\boldsymbol{0}\in\mathbfcal{X}^*$ and $\boldsymbol{x}_0\in H$ of
 $\mathbfcal{A}: D(\bfA) \subseteq \mathbfcal{X}
  \rightarrow\mathbfcal{X}^*$ with
  $D(\bfA) =\mathbfcal{X}\cap_{\boldsymbol{j}} \mathbfcal{Y}$.
\hfill$\square$
\end{proof}

\section{Existence theorem}
\label{sec:4}
\begin{thm}[Main theorem]\label{4.1}
  Let $(V,H,j)$ be a pre-evolution triple, $1<p<\infty$ and
  $A(t):V\cap_j H\to (V\cap H)^*$, $t\in I$, a family of operators such that
  (\hyperlink{C.1}{C.1})--(\hyperlink{C.3}{C.3}) are fulfilled and
  that the induced operator  $\mathbfcal{A}: D(\bfA) \subseteq \mathbfcal{X}
  \rightarrow\mathbfcal{X}^*$ with
  $D(\bfA) =\mathbfcal{X}\cap_{\boldsymbol{j}} \mathbfcal{Y}$
  is Bochner pseudo-monotone and Bochner coercive with respect to
  $\boldsymbol{f}\in \mathbfcal{X}^*$ and $\boldsymbol{y}_0\in H$. Then exists a solution
  $\boldsymbol{y}\in \mathbfcal{W}$ of the evolution
  equation \eqref{eq:1}, i.e.,
  \begin{align*}
    \begin{aligned}
      \frac{d_e\boldsymbol{y}}{dt}+\mathbfcal{A}\boldsymbol{y}&=\boldsymbol{f}\quad&&\text{ in }\mathbfcal{X}^*,\\
      (\boldsymbol{j}\boldsymbol{y})(0)&=\boldsymbol{y}_0&&\text{ in }H.
    \end{aligned}%\label{eq:1a}
  \end{align*}
\end{thm}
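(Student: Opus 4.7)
The plan is a Galerkin approximation adapted to the pre-evolution triple setting. Since $V\cap_j H$ is separable (Proposition~\ref{6.6}~(ii)) and $\overline{j(V\cap_j H)}^{\|\cdot\|_H}=H$, I fix a linearly independent sequence $(v_k)_{k\in\mathbb{N}}\subseteq V\cap_j H$ whose linear hull is dense in $V\cap_j H$ and such that, after $H$-Gram--Schmidt on $\{jv_k\}$, the system $\{jv_k\}_{k\in\mathbb{N}}$ is orthonormal in $H$; set $V_n:=\mathrm{span}\{v_1,\ldots,v_n\}$ and choose $y_{0,n}\in V_n$ with $jy_{0,n}\to \boldsymbol{y}_0$ in $H$ and $\|jy_{0,n}\|_H\leq\|\boldsymbol{y}_0\|_H$. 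I seek an absolutely continuous $\boldsymbol{y}_n\colon\overline{I}\to V_n$ with $\boldsymbol{y}_n(0)=y_{0,n}$ satisfying the Galerkin identity
\begin{align*}
\tfrac{d}{dt}(j\boldsymbol{y}_n(t),jv_k)_H+\langle A(t)(\boldsymbol{y}_n(t)),v_k\rangle_{V\cap_j H}=\langle \boldsymbol{f}(t),v_k\rangle_{V\cap_j H}
\end{align*}
for all $k\leq n$ and a.e.\ $t\in I$. Assumptions (\hyperlink{C.1}{C.1}), (\hyperlink{C.2}{C.2}) and the growth bound (\hyperlink{C.3}{C.3}) let Carath\'eodory's theorem provide a local solution; the a~priori bound below extends it to $I$.

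Testing with $\boldsymbol{y}_n(t)$ yields the energy identity
\begin{align*}
\tfrac12\|(j\boldsymbol{y}_n)(t)\|_H^2+\int_0^t\langle A(s)(\boldsymbol{y}_n(s))-\boldsymbol{f}(s),\boldsymbol{y}_n(s)\rangle_{V\cap_j H}\,ds=\tfrac12\|y_{0,n}\|_H^2\leq\tfrac12\|\boldsymbol{y}_0\|_H^2,
\end{align*}
so the Bochner coercivity of $\mathbfcal{A}$ with respect to $\boldsymbol{f},\boldsymbol{y}_0$ delivers a uniform bound $\|\boldsymbol{y}_n\|_{\mathbfcal{X}\cap_{\boldsymbol{j}}\mathbfcal{Y}}\leq M$. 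Combined with Proposition~\ref{3.1}~(i) this also gives $\|\mathbfcal{A}\boldsymbol{y}_n\|_{\mathbfcal{X}^*}\leq c$. Using the reflexivity of $\mathbfcal{X}$ and $\mathbfcal{X}^*$ together with $\mathbfcal{Y}\cong (L^1(I,H))^*$ (valid since $H$ is separable), I extract a subsequence such that $\boldsymbol{y}_n\rightharpoonup\boldsymbol{y}$ in $\mathbfcal{X}$, $\boldsymbol{j}\boldsymbol{y}_n\overset{\ast}{\rightharpoondown}\boldsymbol{j}\boldsymbol{y}$ in $\mathbfcal{Y}$ (the limit identified via Proposition~\ref{2.4} applied to $j$), and $\mathbfcal{A}\boldsymbol{y}_n\rightharpoonup\boldsymbol{\xi}$ in $\mathbfcal{X}^*$.

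The main obstacle is to obtain the pointwise weak convergence $(\boldsymbol{j}\boldsymbol{y}_n)(t)\rightharpoonup (\boldsymbol{j}\boldsymbol{y})(t)$ in $H$ for every $t\in\overline{I}$ without any Aubin--Lions-type compactness. I would exploit the Galerkin identity: for each $k\in\mathbb{N}$ the scalar function $\varphi_n^k(t):=(j\boldsymbol{y}_n(t),jv_k)_H$ satisfies
\begin{align*}
\varphi_n^k(t)=(jy_{0,n},jv_k)_H+\int_0^t\langle\boldsymbol{f}(s)-A(s)(\boldsymbol{y}_n(s)),v_k\rangle_{V\cap_j H}\,ds,
\end{align*}
and (\hyperlink{C.3}{C.3}) together with the bound on $\boldsymbol{y}_n$ shows $(\varphi_n^k)_{n\in\mathbb{N}}$ is uniformly bounded in $W^{1,p'}(I)\hookrightarrow C^{0,1/p}(\overline{I})$. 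Arzel\`a--Ascoli and a diagonal argument yield $\varphi_n^k\to\varphi^k$ uniformly on $\overline{I}$ for every $k$; weak-$\ast$ convergence in $\mathbfcal{Y}$ identifies $\varphi^k(t)=(j\boldsymbol{y}(t),jv_k)_H$ for a.e.\ $t$, and together with the uniform bound $\sup_t\|j\boldsymbol{y}_n(t)\|_H\leq K$ and the totality of $\{jv_k\}$ in $H$ this gives the desired pointwise weak convergence at every $t\in\overline{I}$. Letting $n\to\infty$ in the integrated Galerkin identity against $\psi\in C_0^\infty(I)$ then shows $\boldsymbol{y}\in\mathbfcal{W}$ with $\tfrac{d_e\boldsymbol{y}}{dt}+\boldsymbol{\xi}=\boldsymbol{f}$ in $\mathbfcal{X}^*$, and evaluation at $t=0$ gives $(j\boldsymbol{y})(0)=\boldsymbol{y}_0$.

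To identify $\boldsymbol{\xi}=\mathbfcal{A}\boldsymbol{y}$, I rearrange the Galerkin energy identity at $t=T$, pass to $\limsup$, and invoke the weak lower semicontinuity of $\|\cdot\|_H$ at $t=T$ to obtain
\begin{align*}
\limsup_{n\to\infty}\langle\mathbfcal{A}\boldsymbol{y}_n,\boldsymbol{y}_n\rangle_{\mathbfcal{X}}\leq\langle\boldsymbol{f},\boldsymbol{y}\rangle_{\mathbfcal{X}}+\tfrac12\|\boldsymbol{y}_0\|_H^2-\tfrac12\|(j\boldsymbol{y})(T)\|_H^2=\langle\boldsymbol{\xi},\boldsymbol{y}\rangle_{\mathbfcal{X}},
\end{align*}
where the last equality follows by testing $\tfrac{d_e\boldsymbol{y}}{dt}+\boldsymbol{\xi}=\boldsymbol{f}$ against $\boldsymbol{y}$ via the generalized integration by parts formula (Proposition~\ref{2.11}~(iii)) and using $(j\boldsymbol{y})(0)=\boldsymbol{y}_0$. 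Consequently $\limsup\langle\mathbfcal{A}\boldsymbol{y}_n,\boldsymbol{y}_n-\boldsymbol{y}\rangle_{\mathbfcal{X}}\leq 0$, so all hypotheses \eqref{eq:3.3}--\eqref{eq:3.5} of Bochner pseudo-monotonicity are met. Applying Proposition~\ref{3.6}~(i) to the bounded sequence $(\mathbfcal{A}\boldsymbol{y}_n)_{n\in\mathbb{N}}\subseteq\mathbfcal{X}^*$ yields $\mathbfcal{A}\boldsymbol{y}_n\rightharpoonup\mathbfcal{A}\boldsymbol{y}$ in $\mathbfcal{X}^*$, forcing $\boldsymbol{\xi}=\mathbfcal{A}\boldsymbol{y}$ and completing the proof.
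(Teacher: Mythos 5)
Your proposal is correct and follows the same global skeleton as the paper's proof: Galerkin approximation on $V_n = \mathrm{span}\{v_1,\dots,v_n\}$ with $jv_i$ orthonormalized in $H$, Carath\'eodory for local solvability of the resulting ODE system, the energy identity combined with Bochner coercivity for the a priori bound and global extension, extraction of the weak/weak-$\ast$ limits, the $\limsup$ argument from the energy identity at $t=T$ plus integration by parts and weak lower semicontinuity, and finally Proposition~\ref{3.6}~(i) to identify $\mathbfcal{A}\boldsymbol{y}=\boldsymbol{\xi}$. (You keep the right-hand side $\boldsymbol{f}$ throughout rather than reducing to $\boldsymbol{f}=\boldsymbol{0}$; that is immaterial.)

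Where you genuinely diverge is the crucial step establishing $(\boldsymbol{j}\boldsymbol{y}_n)(t)\rightharpoonup(\boldsymbol{j}\boldsymbol{y})(t)$ in $H$ pointwise. The paper fixes $t$, extracts a weakly convergent subsequence of $\{(\boldsymbol{j}\boldsymbol{y}_n)(t)\}_n$, tests the Galerkin identity with $v\varphi\chi_{[0,t]}$, compares with the limit equation via integration by parts to identify the weak limit as $(\boldsymbol{j}\boldsymbol{y})(t)$, and uses the standard convergence principle so that the \emph{whole} (already extracted) sequence converges at that $t$. You instead observe that, for fixed $k$ and $n\ge k$, the coefficient functions $\varphi_n^k(t)=(j\boldsymbol{y}_n(t),jv_k)_H$ satisfy an integral identity whose integrand is bounded in $L^{p'}(I)$ uniformly in $n$ by (\hyperlink{C.3}{C.3}) and the a~priori bound; hence $(\varphi_n^k)_{n}$ is bounded in $W^{1,p'}(I)\hookrightarrow C^{0,1/p}(\overline{I})$, Arzel\`a--Ascoli plus a diagonal extraction gives uniform convergence for every $k$, the weak-$\ast$ limit in $\mathbfcal{Y}$ identifies the limits, and totality of $\{jv_k\}$ plus the uniform $H$-bound yields the pointwise weak convergence. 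Both methods are standard and both work; yours is arguably more explicit and closer in spirit to the classical Lions compactness device, at the price of one further subsequence extraction, while the paper's argument keeps the sequence fixed. One small ordering slip in your writeup: to pass from the a.e.\ identification $\varphi^k(t)=(j\boldsymbol{y}(t),jv_k)_H$ to equality at \emph{every} $t\in\overline{I}$ (in particular at $t=T$, which you need for the $\limsup$ step) you must already know that $\boldsymbol{y}\in\mathbfcal{W}$ so that $\boldsymbol{j}\boldsymbol{y}$ has a continuous representative; you establish $\boldsymbol{y}\in\mathbfcal{W}$ only afterwards. Since that step depends only on $\boldsymbol{y}_n\rightharpoonup\boldsymbol{y}$ in $\mathbfcal{X}$, $\mathbfcal{A}\boldsymbol{y}_n\rightharpoonup\boldsymbol{\xi}$ in $\mathbfcal{X}^*$, and the Galerkin identity tested with $\psi\in C_0^\infty(I)$, you should simply move it before the Arzel\`a--Ascoli identification, exactly as the paper does (its step 3.2 precedes 3.3). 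With that reordering the argument is complete.
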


From Lemma \ref{3.1} we immediately obtain the following more applicable version of Theorem~\ref{4.1}.

\begin{cor}\label{4.2}
  Let $(V,H,j)$ be an pre-evolution triple, $1<p<\infty$ and
  $A(t):V\cap_j H\to (V\cap H)^*$, $t\in I$, a family of operators such that
  (\hyperlink{C.1}{C.1})--(\hyperlink{C.5}{C.5}) are fulfilled with
  $\mathscr{C}(s)=s^2$ in (\hyperlink{C.5}{C.5}).  Then for arbitrary
  $\boldsymbol{y}_0\in H$ and $\boldsymbol{f}\in\mathbfcal{X}^*$ there
  exists a solution $\boldsymbol{y}\in \mathbfcal{W}$ of the evolution
  equation \eqref{eq:1}.
\end{cor}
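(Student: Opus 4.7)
The plan is to reduce Corollary \ref{4.2} directly to the main existence result, Theorem \ref{4.1}, by invoking Proposition \ref{3.1} to promote the concrete pointwise conditions (\hyperlink{C.1}{C.1})--(\hyperlink{C.5}{C.5}) into the abstract Bochner-level hypotheses required there.

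I would carry out this reduction in three immediate steps, each a direct invocation of Proposition \ref{3.1}. First, part (i), combined with (\hyperlink{C.1}{C.1})--(\hyperlink{C.3}{C.3}), yields that the induced operator $\mathbfcal{A}\colon\mathbfcal{X}\cap_{\boldsymbol{j}}\mathbfcal{Y}\to\mathbfcal{X}^*$ is well-defined and sends bounded sets of $\mathbfcal{X}\cap_{\boldsymbol{j}}\mathbfcal{Y}$ into bounded sets of $\mathbfcal{X}^*$. Next, part (ii), under the full package (\hyperlink{C.1}{C.1})--(\hyperlink{C.5}{C.5}), delivers the Bochner pseudo-monotonicity of $\mathbfcal{A}$. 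Finally, the special choice $\mathscr{C}(s)=s^2$ in (\hyperlink{C.5}{C.5}) activates part (iii) and produces Bochner coercivity with respect to every admissible pair $(\boldsymbol{f},\boldsymbol{y}_0)\in\mathbfcal{X}^*\times H$.

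With these three abstract properties in hand for arbitrary $\boldsymbol{y}_0\in H$ and $\boldsymbol{f}\in\mathbfcal{X}^*$, Theorem \ref{4.1} applies verbatim and produces the desired solution $\boldsymbol{y}\in\mathbfcal{W}$ of \eqref{eq:1}. In this sense Corollary \ref{4.2} is merely a convenient repackaging of Theorem \ref{4.1} in terms of hypotheses that can be verified time-slice-by-time-slice on the family $\{A(t)\}_{t\in I}$.

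The only genuine obstacle lies not in the corollary itself but in the two results on which it leans: the proof of Proposition \ref{3.1} (ii), where the almost-everywhere pseudo-monotonicity of $\{A(t)\}_{t\in I}$ has to be lifted to the Bochner-Lebesgue level via an $\varepsilon$-Young absorption of the growth bound from (\hyperlink{C.3}{C.3}) together with a Fatou- and Vitali-type argument producing the pointwise weak convergence in $V\cap_j H$ that unlocks the pseudo-monotonicity of each $A(t)$; and the proof of Theorem \ref{4.1}, where one would typically run a Galerkin approximation, extract a priori bounds from Bochner coercivity, and identify the operator limit using Bochner pseudo-monotonicity.
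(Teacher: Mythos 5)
Your proposal is correct and matches the paper's own (one-line) argument precisely: the paper simply states that Corollary~\ref{4.2} follows immediately from Proposition~\ref{3.1} (which it mislabels as a Lemma) combined with Theorem~\ref{4.1}, and your three-step unwinding — parts (i), (ii), (iii) of Proposition~\ref{3.1} supplying boundedness, Bochner pseudo-monotonicity, and Bochner coercivity respectively — is exactly the intended reduction.
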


\begin{rmk}\label{4.3}
  If $(V,H,j)$ is an evolution triple, the assertions of Theorem
  \ref{4.1} and Corollary \ref{4.2} remain true as $(V,H,j)$ is a
  pre-evolution triple as well. In addition, one can replace
  $V\cap_j H$ by $V$ in Theorem \ref{4.1} and Corollary \ref{4.2} as
  $V=V\cap_j H$ with norm equivalence.
\end{rmk}
%\pagebreak
\paragraph{\textbf{Proof} (of Theorem \ref{4.1})}

\paragraph{\textbf{0. Reduction of assumptions:}} As in the proofs of Lemma \ref{3.21} and Proposition \ref{3.1} (i) it suffices anew to treat the special case $\boldsymbol{f}=\boldsymbol{0}$
in $\mathbfcal{X}^*$. 

\paragraph{\textbf{1. Galerkin approximation:}}
Based on the separability of $V\cap_j H$
(cf.~Proposition~\ref{6.6}~(i)) there exists a sequence
$(v_i)_{i\in \mathbb{N}}\subseteq V\cap_j H$ which is dense in
$V\cap_j H$. Due to the density of $R(j)$ in $H$ and the Gram-Schmidt
process we can additionally assume that
$(jv_i)_{i\in \mathbb{N}}\subseteq H$ is dense and orthonormal
in $H$. We set $V_n:=\text{span}\{v_1,...,v_n\}$ equipped with
$\|\cdot\|_V$ and $H_n:=j(V_n)$ equipped with $(\cdot,\cdot)_H$.
Denote by $j_n:V_n\to H_n$ the restriction of $j$ to $V_n$ and by
$R_n:H_n\to H_n^*$ the corresponding Riesz isomorphism with respect to
$(\cdot,\cdot)_H$. As $j_n$ is an isomorphism, the triple
$(V_n,H_n,j_n)$ is an evolution triple with canonical embedding
$e_n:=j_n^*R_nj_n:V_n\to V_n^*$.  Moreover, we set
\begin{align*}
  \mathbfcal{X}_n:=L^p(I,V_n),\qquad
  \mathbfcal{W}_n:=W_{e_n}^{1,p,p'}(I,V_n,V_n^*),\qquad
  \mathbfcal{Y}_n:=C^0(\overline{I},H_n). 
\end{align*}
Then Proposition \ref{2.11} provides the embedding  $\boldsymbol{j}_n:\mathbfcal{W}_n\to\mathbfcal{Y}_n$ and the generalized 
integration by parts formula with respect to $\mathbfcal{W}_n$.

We are seeking approximative solutions
$\boldsymbol{y}_n\in \mathbfcal{W}_n$ which solve the Galerkin system
\begin{align}\label{eq:4.4}
  \begin{aligned}
    \frac{d_{e_n}\boldsymbol{y}_n}{dt}+(\text{id}_{\mathbfcal{X}_n})^*
    \mathbfcal{A}\boldsymbol{y}_n&=\boldsymbol{0}&&\text{ in
    }\mathbfcal{X}_n^*,
    \\
    (\boldsymbol{j}_n\boldsymbol{y}_n)(0)&=\boldsymbol{y}_0^n &&\text{
      in }H_n,
  \end{aligned}
\end{align}
where $\boldsymbol{y}_0^n:=\sum_{i=1}^{n}{(\boldsymbol{y}_0,jv_i)_Hjv_i}$.
\paragraph{\textbf{2. Existence of Galerkin solutions:}}

It is straightforward to check that $\boldsymbol{y}_n \in \mathbfcal{W}_n$ iff 
\begin{align}
    \boldsymbol{y}_n=\sum_{i=1}^{n}{\alpha_i^nv_i}\text{ with }
  \alpha_i^n\in W^{1,p'}(I) \qquad \text{ and } \qquad
  \frac{d_{e_n}\boldsymbol{y}_n}{dt}=\sum_{i=1}^{n}
  {\frac{d\alpha_i^n}{dt}e_nv_i}\text{ in }\mathbfcal{X}_n^*. \label{eq:4.5}
\end{align}
Thus, defining $\boldsymbol{f}^n:I\times\mathbb{R}^n\to \mathbb{R}^n$
by
$\boldsymbol{f}^n(t,\boldsymbol \alpha):=(\langle
A(t)(\sum_{k=1}^n{\alpha_kv_k}),v_i\rangle_{V\cap_j H})_{i=1,...,n}$
for almost every $t\in I$ and all
$\boldsymbol{\alpha}=(\alpha_i)_{i=1,...,n}\in \mathbb{R}^n$, one
sees, that \eqref{eq:4.4} can be re-written as a system of ordinary
differential equations
\begin{align}
  \begin{alignedat}{2}
    \frac{d\boldsymbol{\alpha}^n}{dt}(s)&=
    \boldsymbol{f}^n(s,\boldsymbol{\alpha}^n(s))&&\text{ in }\mathbb{R}^n\text{ for a.e. }s\in I,\\
    \boldsymbol{\alpha}^n(0)&=((\boldsymbol{y}_0,jv_i))_{i=1,...,n}\quad
    &&\text{
      in }\mathbb{R}^n.
  \end{alignedat}\label{eq:4.6}
\end{align} 
From the assumptions (\hyperlink{C.1}{C.1}) and (\hyperlink{C.2}{C.2})
we deduce that the system \eqref{eq:4.6} satisfies the standard
Carath\'eodory conditions and by assumption (\hyperlink{C.3}{C.3})
additionally a local majorant condition required in Carath\'eodory's
existence theorem (cf. \cite[Theorem 5.2]{Hal80}). The latter provides
a maximal time horizon $T_n\in (0, T]$ and an absolutely continuous
solution
$\boldsymbol{\alpha}^n:\left[0,T_n\right)\rightarrow \mathbb{R}^n$ of
\eqref{eq:4.6} restricted to $\left[0,T_n\right)$. From
(\hyperlink{C.3}{C.3}) and
$\boldsymbol{\alpha}^n\in C^0(\left[0,t\right],\mathbb{R}^n)$ for all
$0<t<T_n$ we infer that
$\frac{d\boldsymbol{\alpha}^n}{dt}=\boldsymbol{f}^n(\cdot,\boldsymbol{\alpha}^n)\in
L^{p'}(\left(0,t\right),\mathbb{R}^n)$ for all $0<t<T_n$. We set
$\boldsymbol{y}_n:=\sum_{i=1}^{n}{\alpha_i^nv_i}$. Then
$\boldsymbol{y}_n\in W^{1,p,p'}_{e_n}(\left(0,t\right),V_n,V_n^*)$ for
all $0<t<T_n$ (cf.~\eqref{eq:4.5}). Suppose $T_n<T$. We integrate the
inner product of \eqref{eq:4.6} and
$\boldsymbol{\alpha}^n(s)\in \mathbb{R}^n$ with respect to
$s\in\left[0,t\right]$, where $0<t\leq T_n$, apply the generalized
integration by parts formula with respect to
$W^{1,p,p'}_{e_n}(\left(0,t\right),V_n,V_n^*)$
(cf.~Proposition~\ref{2.11}), and use $\boldsymbol j_n =\boldsymbol j$
on $W^{1,p,p'}_{e_n}(\left(0,t\right),V_n,V_n^*)$, to obtain
\begin{align}
\frac{1}{2}\|(\boldsymbol{j}\boldsymbol{y}_n)(t)\|_H^2+\int_{0}^{t}{\langle A(s)(\boldsymbol{y}_n(s)),\boldsymbol{y}_n(s)\rangle_{V\cap_j H}\,ds}\leq\frac{1}{2}\|\boldsymbol{y}_0^n\|_H^2\leq \frac{1}{2}\|\boldsymbol{y}_0\|_H^2\label{eq:4.7}
\end{align}
for all $t\in \left[0,T_n\right)$.  By
$\overline{\boldsymbol{y}}_n:\overline{I}\rightarrow V_n$ we denote
the extension of $\boldsymbol{y}_n:\left[0,T_n\right)\rightarrow V_n$
by zero outside $\left[0,T_n\right)$. Thus, our extension satisfies
\begin{align}
\frac{1}{2}\|(\boldsymbol{j}\overline{\boldsymbol{y}}_n)(t)\|_H^2+\int_{0}^{t}{\langle A(s)(\overline{\boldsymbol{y}}_n(s)),\overline{\boldsymbol{y}}_n(s)\rangle_{V\cap_j H}\,ds}\leq \frac{1}{2}\|\boldsymbol{y}_0\|_H^2\label{eq:4.8}
\end{align}
for all $t\in \overline{I}$. From \eqref{eq:4.8} and the Bochner
coercivity with respect to $\boldsymbol{0}\in\mathbfcal{X}^*$ and $\boldsymbol{y}_0\in H$ of
$\mathbfcal{A}: D(\bfA) \subseteq \mathbfcal{X}
  \rightarrow\mathbfcal{X}^*$ with
  $D(\bfA) =\mathbfcal{X}\cap_{\boldsymbol{j}} \mathbfcal{Y}$
we obtain an $n$-independent constant $M>0$ such that
\begin{align*}
\|\boldsymbol{y}_n\|_{L^p(\left(0,T_n\right),V\cap_j
  H)\cap_{\boldsymbol{j}}L^\infty(\left(0,T_n\right),H)}=
  \|\overline{\boldsymbol{y}}_n\|_{\mathbfcal{X}\cap_{\boldsymbol{j}}\mathbfcal{Y}}
  \leq M. %\label{eq:4.9} 
\end{align*}
In consequence, $\boldsymbol{\alpha}^n\in
L^\infty(\left(0,T_n\right),\mathbb{R}^n)$ and therefore
$\frac{d\boldsymbol{\alpha}^n}{dt}=\boldsymbol{f}^n(\cdot,\boldsymbol{\alpha}^n)\in
L^{p'}(\left(0,T_n\right),\mathbb{R}^n)$ due to (\hyperlink{C.3}{C.3}). The fundamental theorem of
calculus now yields $\boldsymbol{\alpha}^n\in
C^0(\left[0,T_n\right],\mathbb{R}^n)$. Hence, we can apply
Caratheodory's theorem once more with initial value
$\boldsymbol{\alpha}^n(T_n)\in \mathbb{R}^n$, to obtain an extension of
$\boldsymbol{\alpha}^n$ to a solution of \eqref{eq:4.6} on
$\left[0,T_n+\varepsilon\right]$, with $\varepsilon>0$. This
contradicts the maximality of $T_n>0$ and we conclude $T_n=T$. In
particular, the estimates
\begin{align}
  \|\boldsymbol{y}_n\|_{\mathbfcal{X}\cap_{\boldsymbol{j}}\mathbfcal{Y}}\leq
  M\qquad \text{ and }\qquad \|\mathbfcal{A}\boldsymbol{y}_n\|_{\mathbfcal{X}^*}\leq
  M'\label{eq:4.10} 
\end{align}
hold true, where we used the boundedness of
$\mathbfcal{A}:\mathbfcal{X}\cap_{\boldsymbol{j}}\mathbfcal{Y}\rightarrow\mathbfcal{X}^*$
according to Lemma \ref{3.1}~(i) for the second estimate.

\paragraph{\textbf{3. Passage to the limit:}}

\paragraph{\textbf{3.1 Convergence of the Galerkin solutions:}}
From the a-priori estimates \eqref{eq:4.10} we obtain a not relabelled
subsequence $(\boldsymbol{y}_n)_{n\in \mathbb{N}}\subseteq
\mathbfcal{X}\cap_{\boldsymbol{j}}\mathbfcal{Y}$ as well as elements
$\boldsymbol{y}\in\mathbfcal{X}\cap_{\boldsymbol{j}}\mathbfcal{Y}$ and
$\boldsymbol{\xi}\in\mathbfcal{X}^*$ such that
\begin{align}
  \begin{alignedat}{2}
    \boldsymbol{y}_n&\overset{n\rightarrow\infty}{\rightharpoonup}\boldsymbol{y}&\quad
    &\text{ in }\mathbfcal{X},\\
    \boldsymbol{j}\boldsymbol{y}_n&\;\;\overset{\ast}{\rightharpoondown}\;\;\boldsymbol{j}\boldsymbol{y}&&\text{
      in }\mathbfcal{Y},\qquad (n\rightarrow\infty),\\
    \mathbfcal{A}\boldsymbol{y}_n&\overset{n\rightarrow\infty}{\rightharpoonup}\boldsymbol{\xi}&&\text{
      in }\mathbfcal{X}^*.
\end{alignedat}\label{eq:4.11}
\end{align}

\paragraph{\textbf{3.2 Regularity and trace of the weak limit:}}
\hypertarget{3.2}{}Let $v\in V_k$, $k\in \mathbb{N}$, and
$\varphi\in C^\infty(\overline{I})$ with $\varphi(T)=0$. Testing
\eqref{eq:4.4} for $n\ge k$ by
$v\varphi\in \mathbfcal{X}_k\subseteq \mathbfcal{X}_n$ and a
subsequent application of the generalized integration by parts formula
with respect to $\mathbfcal{W}_n$ (cf. Proposition \ref{2.11}) yields
\begin{align*}
  \int_I{\langle A(s)(\boldsymbol{y}_n(s)),v\rangle_{V\cap_j
  H}\varphi(s)\,ds} =\int_I{((\boldsymbol{jy}_n)(s),jv)_H\varphi\prime(s)\,ds}
  +(\boldsymbol y_0^n,jv)_H\varphi(0).
\end{align*}
By passing with $n\ge k$ to infinity, using \eqref{eq:4.11} and
$\boldsymbol y_0^n\overset{n\rightarrow\infty} {\rightarrow}\boldsymbol{y}_0$ in
$H$, we obtain
\begin{align}
\int_I{\langle\boldsymbol{\xi}(s),v\rangle_{V\cap_j
  H}\varphi(s)\,ds}=\int_I{((\boldsymbol{jy})(s),jv)_H\varphi\prime(s)\,ds}
  +(\boldsymbol{y}_0,jv)_H\varphi(0)\label{eq:4.12} 
\end{align}
for all $v\in \bigcup_{k\in\mathbb{N}}{V_k}$ and
$\varphi\in C^\infty(\overline{I})$ with $\varphi(T)=0$. Choosing  
$\varphi\in C_0^\infty(I)$ in \eqref{eq:4.12}, we have due to Definition \ref{2.10} and Proposition \ref{2.11}
\begin{align}\label{eq:W}
  \boldsymbol{y}\in \mathbfcal{W} \textrm{ with }
  \frac{d_e\boldsymbol{y}}{dt}=-\boldsymbol{\xi} \quad\textrm { in }
  \mathbfcal{X}^*\qquad \textrm{ and } \qquad \boldsymbol{j}\boldsymbol{y}\in C^0(\overline{I},H).
\end{align}
Thus, we are allowed to apply the generalized integration by parts formula with respect to $\mathbfcal{W}$ in
\eqref{eq:4.12} in the case $\varphi\in C^\infty(\overline{I})$ with
$\varphi(T)=0$ and $\varphi(0)=1$, which yields for all $v\in
\bigcup_{k\in\mathbb{N}}{V_k}$ 
\begin{align}
  ((\boldsymbol{jy})(0)-\boldsymbol{y}_0,jv)_H=0. \label{eq:4.13}
\end{align}
As $R(j)$ is dense in $H$ we deduce from \eqref{eq:4.13} that
\begin{align}
  (\boldsymbol{jy})(0)=\boldsymbol{y}_0\quad \text{ in }H.\label{eq:4.14}
\end{align}
\paragraph{\textbf{3.3 Pointwise weak convergence in $H$:}}
% The objective in the following passage is to verify the not-yet
% determined pointwise weak convergence in $H$ (cf. \eqref{eq:1.7}),
% which was predicted in the introduction to be a natural property of a
% sequence of Galerkin solutions, stemming right from the approximation,
% extracted with the help of the generalized integration by parts
% formula and thus in conjunction with Lemma \ref{3.7} motivated the definition of

Now we show that
$ (\boldsymbol{j}\boldsymbol{y}_n)(t)
\overset{n\rightarrow\infty}{\rightharpoonup} (\boldsymbol j
\boldsymbol{y})(t)$ in $H$ for almost every $t \in I$, which is the
crucial new condition of Bochner pseudo-monotonicity compared to
standard pseudo-monotonicity, apart from the boundedness in
$\mathbfcal{Y}$.  To this end, let us fix an arbitrary
$t\in\left(0,T\right]$. From the a-priori estimate
$\|(\boldsymbol{j}\boldsymbol{y}_n)(t)\|_{H}\leq M$ for all
$t\in \overline{I}$ and $n\in\mathbb{N}$ (cf.~\eqref{eq:4.10}) we
obtain the existence of a subsequence
$((\boldsymbol{j}\boldsymbol{y}_n)(t))_{n\in\Lambda_t}\subseteq
H$ with $\Lambda_t\subseteq\mathbb{N}$, initially
depending on this fixed $t$, and an element
$\boldsymbol{y}_{\Lambda_t}\in H$ such that
\begin{align}
  (\boldsymbol{j}\boldsymbol{y}_n)(t)
  &\overset{n\rightarrow\infty}{\rightharpoonup}
    \boldsymbol{y}_{\Lambda_t}\quad \text{ in }H\text{ }(n\in
    \Lambda_t).\label{eq:4.15}  
\end{align}
% Therefore, we have to verify that \eqref{eq:4.15} is still valid if
% $\Lambda_t=\mathbb{N}$ and
% $\boldsymbol{y}_{\Lambda_t}=(\boldsymbol{j}\boldsymbol{y})(t)$
% in $H$. With this in mind, we consider
For $v\in V_k$, $k\in\Lambda_t$, and
$\varphi\in C^\infty(\overline{I})$ with $\varphi(0)=0$ and
$\varphi(t)=1$, we test \eqref{eq:4.4} for $n\ge k$
($n\in \Lambda_t$) by
$v\varphi\chi_{\left[0,t\right]}\in \mathbfcal{X}_k\subseteq
\mathbfcal{X}_n$, use the
generalized integration by parts formula in $\mathbfcal{W}_n$, and
\eqref{eq:iden}, to obtain for all
$n\ge k$ with $n\in \Lambda_t$ 
%yield as per Step \hyperlink{3.2}{3.2} that
\begin{align*}
  \int_0^t{\langle A(s)(\boldsymbol{y}_n(s)),v\rangle_{V\cap_j
  H}\varphi(s)\,ds}=\int_0^t{((\boldsymbol{j}\boldsymbol{y}_n)(s),
  jv)_H\varphi\prime(s)\,ds}-((\boldsymbol{j}\boldsymbol{y}_n)(t),jv)_H .
\end{align*}
By passing for $n\ge k$ with $n\in \Lambda_t$ to
infinity, using \eqref{eq:4.11} and \eqref{eq:4.15}, we obtain 
\begin{align*}
  \int_0^t{\langle \boldsymbol{\xi}(s),v\rangle_{V\cap_j
  H}\varphi(s)\,ds}=\int_0^t{((\boldsymbol{j}\boldsymbol{y})(s),
  jv)_H\varphi\prime(s)\,ds}-(\boldsymbol{y}_{\Lambda_t},jv)_H 
\end{align*}
for all $v\in \bigcup_{k\in\Lambda_t}{V_k}$. From
\eqref{eq:W} and the generalized integration by parts formula in $\mathbfcal{W}$ we also
obtain 
\begin{align}
  ((\boldsymbol{j}\boldsymbol{y})(t)-
  \boldsymbol{y}_{\Lambda_t},jv)_H=0\label{eq:4.16} 
\end{align}
for all $v\in\bigcup_{k\in\Lambda_t}{V_k}$. Thanks to
$V_k\subseteq V_{k+1}$ for all $k\in\mathbb{N}$ we get
$\bigcup_{k\in\Lambda_t}{V_k}=\bigcup_{k\in\mathbb{N}}{V_k}$. Thus,
$j(\bigcup_{k\in\Lambda_t}{V_k})$ is dense in $H$ and
\eqref{eq:4.16} yields that
$(\boldsymbol{j}\boldsymbol{y})(t)=\boldsymbol{y}_{\Lambda_t}$
in $H$. Consequently, we deduce from \eqref{eq:4.15} that 
\begin{align}
  (\boldsymbol{j}\boldsymbol{y}_n)(t)\overset{n\rightarrow\infty}
  {\rightharpoonup}(\boldsymbol{j}\boldsymbol{y})(t)\quad \text{
  in }H\text{ }(n\in\Lambda_t).\label{eq:4.17} 
\end{align}
As this argumentation stays valid for each weakly convergent subsequence of
$((\boldsymbol{j}\boldsymbol{y}_n)(t) )_{n\in\mathbb{N}}\subseteq H$,
$(\boldsymbol{j}\boldsymbol{y})(t)\in H$ is weak accumulation point of
each weakly converging subsequence of
$((\boldsymbol{j}\boldsymbol{y}_n)(t) )_{n\in\mathbb{N}}\subseteq
H$. The standard convergence principle (cf.~\cite[Kap.~I, Lemma 5.4]{GGZ74}) yields
$\Lambda_t=\mathbb{N}$ in \eqref{eq:4.17}.
\paragraph{\textbf{3.4 Identification of $\mathbfcal{A}\boldsymbol{y}$ and $\boldsymbol{\xi}$}:}
Due to \eqref{eq:4.7} in the case $t=T$ we have for all $n\in \mathbb{N}$
\begin{align*}
\langle
  \mathbfcal{A}\boldsymbol{y}_n,\boldsymbol{y}_n\rangle_\mathbfcal{X}\leq
  -\frac{1}{2}\|(\boldsymbol{j}\boldsymbol{y}_n)(T)\|_H^2+
  \frac{1}{2}\|\boldsymbol{y}_0\|_H^2 .
\end{align*}
This inequality together with 
$\eqref{eq:4.11}_3$, \eqref{eq:4.14}, \eqref{eq:4.17} with
$\Lambda_t=\mathbb{N}$ in the case $t=T$, the weak lower
semi-continuity of $\|\cdot\|_H$, the generalized integration by parts
formula in $\mathbfcal{W}$ and \eqref{eq:W}
% $\frac{d_e\boldsymbol{y}}{dt}=-\boldsymbol{\xi}$ in $\mathbfcal{X}^*$
yields
\begin{align}\begin{split}
    \limsup_{n\rightarrow\infty}{ \langle
      \mathbfcal{A}\boldsymbol{y}_n,\boldsymbol{y}_n-\boldsymbol{y}\rangle_\mathbfcal{X}}
    &\leq
    -\frac{1}{2}\|(\boldsymbol{j}\boldsymbol{y})(T)\|_H^2+\frac{1}{2}\|(\boldsymbol{j}\boldsymbol{y})(0)\|_H^2-\langle
    \boldsymbol{\xi},\boldsymbol{y}\rangle_\mathbfcal{X}\\&=-\left\langle
      \frac{d_e\boldsymbol{y}}{dt},\boldsymbol{y}\right\rangle_\mathbfcal{X}-\langle
    \boldsymbol{\xi},\boldsymbol{y}\rangle_\mathbfcal{X}=0.
\end{split}\label{eq:4.18}
\end{align}
As a result of \eqref{eq:4.11}, \eqref{eq:4.17} with
$\Lambda_t=\mathbb{N}$ for all $t\in\overline{I}$,
\eqref{eq:4.18} and the Bochner pseudo-monotonicity of
$\mathbfcal{A}:D(\bfA)\subseteq \mathbfcal{X}\rightarrow
\mathbfcal{X}^*$ with $D(\bfA) =\mathbfcal{X}\cap_{\boldsymbol{j}}\mathbfcal{Y}$, Lemma \ref{3.6} (i) finally provides
$\mathbfcal{A}\boldsymbol{y}=\boldsymbol{\xi}\text{ in
}\mathbfcal{X}^*$. This completes the proof of Theorem
\ref{4.1}.\hfill$\square$

\section{Examples}
\label{sec:5}
In this section we give two prototypical examples to which Theorem \ref{4.1} and the notions developed in Section \ref{sec:3} can be applied. We emphasize that the existence results given in these two examples are not new, see e.g. \cite{Lio69,BR17}. The following examples shall merely illustrate that the
conditions (\hyperlink{C.1}{C.1})--(\hyperlink{C.5}{C.5}) are easily
verifiable and quite general, and in what way the scope of application
is extended by the treatment of pre-evolution triples.

\begin{expl}[Unsteady p-Navier-Stokes equation for $p\ge \frac{11}{5}$]\label{5.1}
  Let $\Omega\subseteq \mathbb{R}^3$ be a bounded domain,
  $I:=\left(0,T\right)$, with $0<T<\infty$. Moreover, let
  $\mathcal{V}=\{\bv\in C_0^\infty(\Omega)^3\mid \text{div }\bv\equiv 0\}$,
  $V$ the closure of $\mathcal{V}$ with respect to
  $\|\nabla\cdot\|_{L^p(\Omega)}$, $H$ the closure of
  $\mathcal{V}$ with respect to $\|\cdot\|_{L^2(\Omega)}$ and let
  $S,B: V\to V^*$ be defined as in the introduction. Then,
  $(V,H,\text{id})$ is an evolution triple, $A:=S+B:V\to V^*$
  satisfies (\hyperlink{C.1}{C.1})--(\hyperlink{C.5}{C.5}) with
  $\mathscr{C}\equiv 0$ in (\hyperlink{C.5}{C.5}) and its induced operator
  $\mathbfcal{A}:L^p(I,V)\cap L^\infty(I,H)\to L^{p'}(I,V^*)$ is
  bounded, Bochner pseudo-monotone and coercive. In addition, for
  arbitrary $\bu_0\in H$ and
  $\boldsymbol{f}\in L^{p'}(I,V^*)$ there exists a solution
  $\boldsymbol{u}\in W_e^{1,p,p'}(I,V,V^*)$ of
  \begin{align*}
      -\int_I{(\boldsymbol{u}(s),\bv)_H\,\varphi\prime(s)\,ds}
      &+\int_I{\int_{\Omega}{\big(\bS(\bD\boldsymbol{u}(s))-\boldsymbol{u}(s)
        \otimes\boldsymbol{u}(s)\big ):\bD
          \bv\,\varphi(s)\,dx}\,ds}
      \\
      &=\int_I{\left\langle\boldsymbol{f}(s),\bv\right\rangle_V\varphi(s)\,ds}
  \end{align*}
  for all $\bv\in V$ and $\varphi\in C_0^\infty(I)$ with
  $\boldsymbol{u}(0)=\bu_0$ in $H$.
\end{expl}

\begin{proof}
	Clearly, $(V,H,\text{id})$ forms an evolution triple and $A:V\to V^*$ is bounded, pseudo-monotone and demi-continuous, see e.g. \cite[Section 6]{BR17}, and thus satisfies
	(\hyperlink{C.1}{C.1}), (\hyperlink{C.2}{C.2}) and (\hyperlink{C.4}{C.4}). (\hyperlink{C.5}{C.5}) with $\mathscr{C}\equiv 0$ immediately follows from $\langle S\bv,\bv\rangle_V=\|\bv\|_V^p$ and $\langle B\bv,\bv\rangle_V=0$ for every $\bv\in V$. For (\hyperlink{C.3}{C.3}) we first note that $\|S\bv\|_{V^*}\leq \|\bv\|_V^{p-1}$ and $\|B\bv\|_{V^*}\leq \|\bv\|^2_{L^{2p'}(\Omega)}$ for every $\bv\in V$. 
	
	If $p\ge 3$, then $p-1\ge2$ and $\|\bv\|_{L^{2p'}(\Omega)}\leq c\|\bv\|_V$ for all $\bv\in V$. Thus, using $a^2\leq (1+a)^{p-1}\leq 2^{p-2}(1+a^{p-1})$ for all $a\ge 0$, we obtain $\|B\bv\|_{V^*}\leq c(1+\|\bv\|_V^{p-1})$ for every $v\in V$.
	
	If $p\in[\frac{11}{5},3)$, then by interpolation with $\frac{1}{\rho}=\frac{1-\theta}{p^*}+\frac{\theta}{2}$, where $\rho=p\frac{5}{3}$, $\theta=\frac{2}{5}$ and $p^*=\frac{3p}{3-p}$, and using $a^{6/5}\leq (1+a)^{p-1}\leq 2^{p-2}(1+a^{p-1})$ for all $a\ge 0$, as $\frac{6}{5}\leq p-1$, we obtain for all $v\in V$ 
	\begin{align*}
	\|\bv\|_{L^{\rho}(\Omega)^3}^2\leq \|\bv\|_H^{\frac{4}{5}}\|\bv\|_{L^{p^*}(\Omega)}^{\frac{6}{5}}\leq \|\bv\|_H^{\frac{4}{5}}(1+\|\bv\|_{L^{p^*}(\Omega)})^{p-1}\leq  c\|\bv\|_H^{\frac{4}{5}}(1+\|\bv\|_V^{p-1}).
	\end{align*}
	Since also $\rho\ge 2p'$, we infer $\|B\bv\|_{V^*}\leq c\|\bv\|_H^{\frac{4}{5 }}(1+\|\bv\|_V^{p-1})$ for every $v\in V$.
	Altogether, $A:V\to V^*$ satisfies (\hyperlink{C.3}{C.3}) and meets the framework of Proposition \ref{3.1} and Corollary \ref{4.2}, which in turn yield the assertion.\hfill$\square$
\end{proof}

\begin{expl}[Unsteady p-Laplace equation with compact perturbation for $p\in (1,\infty)$]\label{5.2}
  Let $\Omega\subseteq \mathbb{R}^d$ be a bounded domain, $I:=\left(0,T\right)$, with $0<T<\infty$,
  and let $b:I\times \Omega\times\mathbb{R}\rightarrow \mathbb{R}$ be a
  function with the following properties:
  \begin{description}[\textbf{(B.1)}]
  \item[\textbf{(B.1)}]\hypertarget{B.1}{} $b$ is measurable in its
    first two variables and continuous in its third variable.
  \item[\textbf{(B.2)}]\hypertarget{B.2}{} For some non-negative
    functions $C_1\in L^{p'}(I,L^q(\Omega))$, $q=\min \{2, (p^*)'\}$, $C_2\in
    L^{p'}(I,L^\infty(\Omega))$ and $1\leq r<
    \max\{2,p\frac{d+2}{d}\}$ holds 
    \begin{align*}
      \vert b(t,x,s)\vert\leq C_1(t,x)+C_2(t,x)(1+\vert s\vert)^{r-1}
    \end{align*}
    for almost every $(t,x)\in I\times\Omega$ and all
    $s\in \mathbb{R}$.
  \item[\textbf{(B.3)}]\hypertarget{B.3}{} For some functions
    $c_1\in L^1(I,L^\infty(\Omega))$ and
    $c_2\in L^1(I\times\Omega,\mathbb{R}_{\ge 0})$ holds
    \begin{align*}
      b(t,x,s)\cdot s\ge c_1(t,x)\vert s\vert^2-c_2(t,x)
    \end{align*}
    for almost every $(t,x)\in I\times\Omega$ and all $s\in \mathbb{R}$.
  \end{description}
	Moreover, set $V=W^{1,p}_0(\Omega)$, $H=L^2(\Omega)$ and let $A_0,B(t):V\cap H\to (V\cap H)^*$ be given via
	\begin{align*}
	\langle A_0v,w\rangle_{V\cap H}
	&:=\int_\Omega{\vert \nabla v\vert^{p-2}\nabla v\cdot \nabla w
		\,dx},\\\langle B(t)v,w\rangle_{V\cap
		H}&:=\int_\Omega{b(t,x,v)\cdot w\,dx} 
	\end{align*}
	for almost every $t\in I$ and all $v,w\in V\cap H$. Then, $(V,H,\text{id})$ forms a pre-evolution triple, $A(t):=A_0+B(t):V\cap H\to (V\cap H)^*$, $t\in I$, satisfies (\hyperlink{C.1}{C.1})--(\hyperlink{C.5}{C.5}) with $\mathscr{C}(s)=s^2$ in (\hyperlink{C.5}{C.5}) and its induced operator $\mathbfcal{A}:L^p(I,V)\cap L^\infty(I,H)\to L^{p'}(I,V^*)$ is bounded, Bochner pseudo-monotone and Bochner coercive. In addition, for arbitrary $\boldsymbol{u}_0\in L^2(\Omega)$ and
  $\boldsymbol{f}\in L^{p'}(I,(V\cap H)^*)$
  there exists a solution
  $\boldsymbol{u}\in W_e^{1,p,p'}(I,V\cap H,(V\cap H)^*)$ of
  \begin{align}
    \begin{split}
      -\int_I{(\boldsymbol{u}(s),v)_H\varphi\prime(s)\,ds}&+\int_I{\int_{\Omega}{
          \big[\vert\nabla\boldsymbol{u}(s)\vert^{p-2}\nabla\boldsymbol{u}(s)\cdot
          \nabla  v+b(t,x,\boldsymbol{u}(s))\cdot
          v\big]\varphi(s)\,dx}\,ds}
      \\
      &=\int_I{\left\langle\boldsymbol{f}(s),v\right\rangle_{V\cap H}\varphi(s)\,ds}
    \end{split}\label{eq:5.3}
  \end{align}
  for all $v\in V\cap H$ and
  $\varphi\in C_0^\infty(I)$ with $\boldsymbol{u}(0)=\boldsymbol{u}_0$
  in $H$. As $C_0^\infty(\Omega)$ is dense in
  $V\cap H$ we infer from \eqref{eq:5.3} the
  distributional identity
  \begin{align*}
    \partial_t\boldsymbol{u}-\textrm{div }(\vert
    \nabla\boldsymbol{u}\vert^{p-2}\nabla\boldsymbol{u})+b(\cdot,\cdot,\boldsymbol{u})=\boldsymbol{f}\quad\text{
    in }\mathcal{D}'(I\times\Omega). 
  \end{align*}
\end{expl}

\begin{proof} Clearly,
  $(V,H,\text{id})=(W^{1,p}_0(\Omega),L^2(\Omega),L^1(\Omega),\text{id},\text{id})$
  forms a pre-evolution triple. As \linebreak
  ${A_0:V\cap H\rightarrow (V\cap H)^*}$ already meets the framework of
  Corollary \ref{4.2} (cf. \cite[Lemmata 1.26 and 1.28]{Ru04}), it
  remains to ensure that ${B(t):V\cap H\rightarrow (V\cap H)^*}$ satisfies (\hyperlink{C.1}{C.1})--(\hyperlink{C.4}{C.4})
  and $A(t):V\cap H\rightarrow (V\cap H)^*$ the semi-coercivity
  condition (\hyperlink{C.5}{C.5}) with $\mathscr{C}(s)=s^2$. We
  restrict ourselves to the case $p \in (1, \frac{2d}{d+2}\big ]$,
  since the case ${p>\frac{2d}{d+2}}$ is already treated in
  \cite[Theorem 6.2]{BR17} or \cite[Proposition 8.37]{Rou05} with the
  help of the evolution triple $(V,H,\text{id})$ and requires only obvious
  modifications to adjust to our framework. From (\hyperlink{B.1}{B.1})
  and (\hyperlink{B.2}{B.2}) in conjunction with the theory of
  Nemyckii operators {(cf.~\cite[Theorem 1.43]{Rou05})} we deduce for
  almost every $t\in I$ the well-definedness and continuity of
  ${F_t:L^\rho(\Omega)\rightarrow L^2(\Omega)}$, where
  ${\rho:=\max\{1,2(r-1)\}}$, given via $(F_tv)(x):=b(t,x,v(x))$ for
  almost every $x\in\Omega$ and all $v\in L^\rho(\Omega)$. In fact, using
  (\hyperlink{B.2}{B.2}) and $(1+ a)^{2(r-1)}\leq (1+ a)^\rho$ for
  all $a\ge 0$, we obtain
  \begin{align}
    \begin{split}
      \|F_tv\|_{L^2(\Omega)}
      &\leq\|C_1(t,\cdot)\|_{L^2(\Omega)}+\|C_2(t,\cdot)\|_{L^\infty(\Omega)}\left(\int_{\Omega}{(1+\vert
          v\vert)^{2(r-1)}\,dx}\right)^{\frac{1}{2}}
      % \\
      % &\leq \|C_1(t,\cdot)\|_{L^2(\Omega)}+\|C_2(t,\cdot)\|_{L^\infty(\Omega)}(\|1+
      % \vert v\vert\|_{L^{q}(\Omega)}^{\frac{q}{2}})
      \\
      &\leq
      \|C_1(t,\cdot)\|_{L^2(\Omega)}+\|C_2(t,\cdot)\|_{L^\infty(\Omega)}\big
      (C(\Omega)+\|v\|_{L^{\rho}(\Omega)}^{\frac{\rho}{2}} \big )
    \end{split}\label{eq:5.4}
  \end{align}
  for almost every $t\in I$ and all $v\in V\cap H$.  Due to $\rho<2$,
  $V\hookrightarrow\hookrightarrow L^1(\Omega)$ and Vitali's theorem
  we get $V\cap H\hookrightarrow\hookrightarrow L^\rho(\Omega)$,
  i.e., $\text{id}_{V\cap H}:V\cap H \to L^\rho(\Omega)$ is strongly continuous. From the
  latter, $B(t)=(\text{id}_{V\cap H})^*F_t(\text{id}_{V\cap H})$ and
  the continuity of both
  ${(\text{id}_{V\cap_H})^*:H\rightarrow (V\cap H)^*}$ and
  ${F_t:L^\rho(\Omega)\rightarrow L^2(\Omega)}$ we infer that
  $B(t):V\cap H\rightarrow (V\cap H)^*$ is strongly continuous and
  thus pseudo-monotone. Thus, we verified
  (\hyperlink{C.1}{C.1}), (\hyperlink{C.4}{C.4}) and
  (\hyperlink{C.3}{C.3}) with $\mathscr{B}(s):=s^{\frac{\rho}{2}}$,
  $\alpha(t):=\|C_2(t,\cdot)\|_{L^\infty(\Omega)}$, $\beta(t):= 0$ and
  $\gamma(t):=\|C_1(t,\cdot)\|_{L^2(\Omega)}+C(\Omega)\|C_2(t,\cdot)\|_{L^\infty(\Omega)}$
  (cf.~\eqref{eq:5.4}). Condition (\hyperlink{C.2}{C.2}) is a
  consequence of Fubini's theorem. Using (\hyperlink{B.3}{B.3}), the
  semi-coercivity condition (\hyperlink{C.5}{C.5}) follows by
  \begin{align*}
    \langle A(t)v,v\rangle_V=\langle A_0v,v\rangle_V+\langle
    B(t)v,v\rangle_V\ge
    \|v\|_V^p-\|c_1(t,\cdot)\|_{L^\infty(\Omega)}\|v\|_H^2-\|c_2(t,\cdot)\|_{L^1(\Omega)} 
  \end{align*}
  for almost all $t\in I$ and all $v\in V\cap H$. Altogether,
  $A(t):=A_0+B(t):V\cap H\rightarrow (V\cap H)^*$, $t\in I$, meets the
  framework of Proposition \ref{3.1} and Corollary \ref{4.2}, which yield the
  assertion.\hfill$\square$
\end{proof}

\subsection*{Acknowledgments}
We would like to thank the referee for the helpful comments which
improved the presentation of the paper. 

%\bibliography{fluid,rose}%lista,libri,altri} 

\begin{thebibliography}{10}

\bibitem{BR17}
{\sc E.~B{\"a}umle and M.~R{\r{u}}{\v{z}}i{\v{c}}ka}, {\sl Note on the
  existence theory for evolution equations with pseudo-monotone operators},
  Ric. Mat. {\bf 66} (2017), no.~1, 35–--50.

\bibitem{BS88}
{\sc C.~Bennett and R.~Sharpley}, {\sl Interpolation of operators}, Pure and
  Applied Mathematics, vol. 129, Academic Press, Inc., Boston, MA, 1988.

\bibitem{BF13}
{\sc F.~Boyer and P.~Fabrie}, {\sl Mathematical tools for the study of the
  incompressible {N}avier-{S}tokes equations and related models}, Applied
  Mathematical Sciences, vol. 183, Springer, New York, 2013.

\bibitem{Bre68}
{\sc H.~Br\'ezis}, {\sl {\'E}quations et in\'equations non lin\'eaires dans les
  espaces vectoriels en dualit\'e}, Annales de l'Institut Fourier {\bf 18}
  (1968), no.~1, 115--175 (fr).

\bibitem{Bro63}
{\sc F.~E. Browder}, {\sl Nonlinear elliptic boundary value problems}, Bull.
  Amer. Math. Soc. {\bf 69} (1963), no.~6, 862--874.

\bibitem{Bro68}
{\sc F.~E. Browder}, {\sl Nonlinear maximal monotone operators in Banach space},
  Mathematische Annalen {\bf 175} (1968), no.~2, 89--113.

\bibitem{GGZ74}
{\sc H.~Gajewski, K.~Gr\"oger, and K.~Zacharias}, {\sl Nichtlineare
  Operator\-glei\-chungen und Operatordifferentialgleichungen},
  Akademie-Verlag, Berlin, 1974.

\bibitem{Hal80}
{\sc J.~Hale}, {\sl Ordinary differential equations / [by] Jack K. Hale},
  SERBIULA (sistema Librum 2.0) (1980).

\bibitem{Hir1}
{\sc N.~Hirano}, {\sl Nonlinear {V}olterra equations with positive kernels},
  Nonlinear and convex analysis ({S}anta {B}arbara, {C}alif., 1985), Lecture
  Notes in Pure and Appl. Math., vol. 107, Dekker, New York, 1987, pp.~83--98.

\bibitem{Hir2}
{\sc N.~Hirano}, {\sl Nonlinear evolution equations with nonmonotonic perturbations},
  Nonlinear Anal. {\bf 13} (1989), no.~6, 599--609.

\bibitem{alex-master}
{\sc A.~Kaltenbach}, {\sl Verallgemeinerte nichtlineare
  {E}volutionsgleichungen}, Master's thesis, Institute of Applied Mathematics,
  Albert-Ludwigs-University Freiburg, 2019.

\bibitem{LM87}
{\sc R.~Landes and V.~Mustonen}, {\sl A strongly nonlinear parabolic
  initial-boundary value problem}, Ark. Mat. {\bf 25} (1987), no.~1, 29--40.

\bibitem{Lio69}
{\sc {J. L.} Lions}, {\sl Quelques M\'ethodes de R\'esolution des Probl\`emes
  aux Limites Non Lin\'eaires}, Dunod, Paris, 1969.

\bibitem{Min63}
{\sc G.~J. Minty}, {\sl On a "Monotonicity" Method for the Solution of
  Nonlinear Equations in Banach Spaces}, Proceedings of the National Academy of
  Sciences of the United States of America {\bf 50} (1963), no.~6, 1038--1041.

\bibitem{Pap97}
{\sc N.~S. Papageorgiou}, {\sl On the Existence of Solutions for Nonlinear
  Parabolic Problems with Nonmonotone Discontinuities}, Journal of Mathematical
  Analysis and Applications {\bf 205} (1997), no.~2, 434 -- 453.

\bibitem{RT01}
{\sc J.~M. Rakotoson and R.~Temam}, {\sl An optimal compactness theorem and
  application to elliptic-parabolic systems}, Appl. Math. Lett. {\bf 14}
  (2001), no.~3, 303--306.

\bibitem{Rou05}
{\sc T.~Roub{\'{\i}}{\v{c}}ek}, {\sl Nonlinear partial differential equations
  with applications}, International Series of Numerical Mathematics, vol. 153,
  Birkh\"auser Verlag, Basel, 2005.

\bibitem{Ru04}
{\sc M.~R{\r u}{\v z}i{\v c}ka}, {\sl Nonlinear functional analysis. An
  introduction. ({N}ichtlineare {F}unktionalanalysis. {E}ine {E}inf\"uhrung.)},
  Berlin: Springer. xii, 208~p., 2004 (German).

\bibitem{Shi97}
{\sc N.~Shioji}, {\sl Existence of periodic solutions for nonlinear evolution
  equations with pseudomonotone operators}, Proc. Amer. Math. Soc. {\bf 125}
  (1997), no.~10, 2921--2929.

\bibitem{Sho97}
{\sc R. E. Showalter}, {\sl Monotone operators in {B}anach space and nonlinear
  partial differential equations}, Mathematical Surveys and Monographs,
  vol.~49, American Mathematical Society, Providence, RI, 1997.

\bibitem{Yos80}
{\sc K.~Yosida}, {\sl Functional analysis}, Springer, Berlin, 1980.

\bibitem{Zei90A}
{\sc E.~Zeidler}, {\sl Nonlinear functional analysis and its applications.
  {II}/{A}}, Springer, New York, 1990, Linear monotone operators.

\bibitem{Zei90B}
{\sc E.~Zeidler}, {\sl Nonlinear functional analysis and its applications. {II}/{B}},
  Springer, New York, 1990, Nonlinear monotone operators.

\end{thebibliography}
%\bibliographystyle{myams}

\ifx\undefined\bysame
\newcommand{\bysame}{\leavevmode\hbox to3em{\hrulefill}\,}
\fi

\end{document}